\newtheorem{theorem}{Theorem}[section]
\newtheorem{proposition}[theorem]{Proposition}
\newtheorem{corollary}[theorem]{Corollary}
\newtheorem{definition}[theorem]{Definition} 
\newtheorem{lemma}[theorem]{Lemma} 
\theoremstyle{remark}
\newtheorem{remark}[theorem]{Remark}
\numberwithin{equation}{section}
\newcommand{\N}{\mathbb{N}}
\newcommand{\R}{{\mathbb R}}
\newcommand{\tr}{\hbox{\rm tr}}
\newcommand{\bx}{{\bar x}}
\newcommand{\by}{{\bar y}}
\newcommand{\eps}{\varepsilon}
\newcommand{\ra}{\rightarrow}
\newcommand{\alp}{\alpha}
\newcommand{\ind}[1]{\mathds{1}_{#1}}
\newcommand{\dmu}{\,\mathrm{d}\mu}
\newcommand{\hyp}[1]{{\bf $($#1$)$}} 
\newcommand{\M}[2]{\mathop{\R^{#1\times #2}}}
\renewcommand{\d}{\mathrm{d}}
\newcommand{\Meps}{${\bf M}_\eps$}
\newcommand{\dz}{\,\mathrm{d}z}
\newcommand{\ds}{\,\mathrm{d}s}
\newcommand{\dy}{\,\mathrm{d}y}
\newcommand{\ol}{\overline}
\newcommand{\ul}{\underline}
\newcommand{\cT}{\mathcal{T}}
\newcommand{\brak}[1]{{\rm [\,#1\,]}}
\renewcommand{\sup}{\mathop{\mathstrut\textrm{sup}}}
\renewcommand{\inf}{\mathop{\mathstrut\textrm{inf}}}
\begin{document}
\title[]
{On nonlocal quasilinear equations\\ and their local limits}

\author[E. Chasseigne \&  E. R. Jakobsen]
{Emmanuel Chasseigne \& Espen R. Jakobsen}

\address{Emmanuel Chasseigne\newline
    Laboratoire de Math\'ematiques et Physique Th\'eorique (UMR CNRS 7350)\newline
    F\'ed\'eration Denis Poisson (FR CNRS 2964)\newline
    Universit\'e F. Rabelais - Tours\newline
    Parc de Grandmont\newline
    37200 Tours, France}
\email{emmanuel.chasseigne@univ-tours.fr}
\urladdr{http://www.lmpt.univ-tours.fr/$\sim$manu}

\address{Espen R. Jakobsen \newline
    Department of Mathematical Sciences \newline
    Norwegian University of Science and Technology \newline
    7491 Trondheim, Norway }
\email{erj@math.ntnu.no}
\urladdr{http://www.math.ntnu.no/$\sim$erj}

\date{\today}
\thanks{E.C. is partially supported by Spanish Project MTM2011-25287, and
  E.R.J. is partially supported by the NFR Toppforsk project Waves and Nonlinear Phenomena (project 250070).}

\keywords{Nonlocal elliptic equation, quasilinear equations,
  quasilinear nonlocal operators, 
   viscosity solutions, L\'{e}vy processes, infinity-Laplace,
   p-Laplace, nonlocal parabolic equation, existence, uniqueness,
   local limits.} 

\subjclass[2010]{35R09, 45K05, 35J60, 35J62, 35J70, 35K59, 47G20,
  35D40, 35A01, 35B51, 35B40}


\begin{abstract}
We introduce a new class of quasilinear nonlocal operators and study
equations involving these operators. The operators are degenerate
elliptic and may have arbitrary growth in the gradient. Included are new
nonlocal versions of $p$-Laplace, $\infty$-Laplace, mean curvature
of graph, and even strongly degenerate operators, in addition to some nonlocal
quasilinear operators appearing in the existing literature. Our main 
results are comparison, uniqueness, and existence results
for viscosity solutions of linear and fully nonlinear equations involving
these operators. Because of the structure of our operators, especially
the existence proof is highly non-trivial and non-standard.
We also identify the conditions under which the
nonlocal operators converge to local quasilinear operators, and show
that the solutions of the corresponding nonlocal equations converge to
the solutions of the  local limit equations. Finally, we give a (formal)
stochastic representation formula for the solutions and provide many examples.  

\end{abstract}

\maketitle

\section{Introduction} \label{sect:introduction}

In this paper we introduce a new class of gradient dependent L\'evy type
diffusion operators and study the well-posedness, stability, and some
asymptotic behavior of equations involving such operators. The
operators we will consider are the following, 
$$L[u,Du]=(L_1+L_2)[u,Du]$$
where
\begin{align}
\label{I-def}
L_1[u,Du](x)&=\int_{\R^P} u\big(x+j_1(Du,z)\big)-u(x)
-j_1(Du,z)\cdot Du(x)\, \dmu_1(z)\,,\\ \label{I-def2}
L_2[u,Du](x)&=\int_{\R^P} u\big(x+j_2(Du,z)\big)-u(x)
\dmu_2(z)\,,
\end{align}
and $\mu_1,\mu_2$ are non-negative L\'evy measures and $j_1,j_2$ are
measurable functions (see Section \ref{sect:main}). 
Here the strength and direction of the diffusion depend on the
gradient, and hence as we explain below, these operators are natural
generalizations of the local (non-divergence form) quasilinear operators
\begin{align*}
L_0(Du,D^2u)=\frac12\tr\big(\sigma(Du)\sigma(Du)^TD^2u\big)+b(Du)Du.
\end{align*}
The operators are allowed to degenerate ($j_1=0$ or $j_2=0$ in some set)
and have arbitrary growth in the gradient, so $\infty$-Laplace, $p$-Laplace,
and strongly degenerate operators are included. Included are also
``explicit'' operators of the form (cf. Section \ref{sect:flex}),
\begin{align}
\label{flex}a(Du)\Big[-(-\Delta)^{\frac\alpha2}u\Big]\qquad\text{for all}\qquad
\alpha\in(0,2)\ \ \text{and}\ \ a\in C(\R^N;\R^+).
\end{align}

We want to study equations involving the operator $L$, and to simplify
and focus on the new issues, the main part of this paper is devoted
to the following special problem: 
\begin{equation}\label{eq:0}
    F\big(u,Du,L[u,Du]\big)=f(x)\qquad\text{in}\qquad \R^N,
\end{equation}
where we assume $F$ to be (degenerate) elliptic and strictly increasing in
$u$ (i.e. $D_uF>0$). But for this equation, we make an effort to push for
very general results. First we obtain comparison,
uniqueness, stability, and existence results for bounded solutions of
\eqref{eq:0}. These results are highly non-trivial due to the
implicit nature of our operators and our weak integrability
assumptions. Especially existence is very challenging as we discuss below. 
We then identify the limit problems where 
nonlocal operators converge to local ones,
\begin{align*}
L_\eps[\phi,D\phi]\to
L_0(D\phi,D^2\phi) \qquad\text{as}\qquad \eps\to 0,
\end{align*}
for any smooth and bounded function $\phi$,
and prove that the solutions $u_\eps$ of the corresponding nonlocal
equations 
\begin{equation}\label{eq:eps}
    F\Big(u_\eps,Du_\eps,L_\eps[u_\eps,Du_\eps]\Big)=f(x)\qquad\text{in}\qquad \R^N,
\end{equation}
converge locally uniformly to the solution of the local equation
\begin{equation}\label{eq:00}
    F\Big(u,Du,L_0(Du,D^2u)\Big)=f(x)\qquad\text{in}\qquad \R^N.
\end{equation}
We refer to Section \ref{sect:main} for the
precise assumptions and results, and to Section \ref{sect:extensions}
for extensions to more general problems like parabolic
problems and problems with several nonlocal operators. Here we just
remark that $(i)$ the weak  
solution concept we use is bounded viscosity solutions, $(ii)$
generators $L$ of every pure jump L\'evy processes are included as 
linear special cases, and $(iii)$ a typical special case of \eqref{eq:0}
satisfying our assumptions is the quasilinear equation 
\begin{equation}\label{eq:main.nonlocal}
  -L[u,Du](x)+u(x)=f(x)\qquad\text{in}\qquad \R^N\,,
\end{equation}
with bounded uniformly continuous $f$.

Let us illustrate our results on $\infty$-Laplace type
operators.  In the local case (e.g. \cite{Li:Note}) this operator has
``diffusion'' (Brownian motion, generator $(-\Delta$)) only in the
gradient direction:    
\begin{align}
\label{inf-lap-loc}
\Delta_\infty
u(x) 
=\tr[Du(x)Du(x)^TD^2u(x)]=(Du(x)\cdot D)^2 u(x).
\end{align}
Natural nonlocal generalizations 
are operators with e.g. $\alp$-stable 
diffusion ($\alp\in(0,2)$) 
along the gradient direction. The generator of the symmetric
$\alp$-stable process is the fractional Laplacian \cite{A:Book},
$$-(-\Delta)^{\alp/2}u(x)=\int_{\R^N} u\big(x+z)-u(x)
-\big(z\cdot Du(x)\big)\,\ind{|z|<1}\frac{c_\alp \dz}{|z|^{N+\alp}}\,,$$
and hence the corresponding nonlocal version of the $\infty$-Laplace
operator would take the form
\begin{align}
\label{inf-lap}
\mathcal{L}_{\Delta_\infty}^{\alpha/2}[u](x)=\int_{\R^1} u\big(x+Du(x)z\big)-u(x)
-Du(x)\cdot Du(x)z\,\ind{|z|<1}\frac{c_\alp \dz}{|z|^{1+\alp}}\,. 
\end{align}
This operator is in the form $L$ with $j_1=Du\cdot z=j_2$,
$\mu_1=\ind{|z|<1}\mu$, and $\mu_2=\ind{|z|\geq 1}\mu$, where 
$d\mu=\frac{c_\alp
  \dz}{|z|^{1+\alp}}$.  
By our results, $L=\mathcal{L}_{\Delta_\infty}^{\alpha/2}$ gives
rise to well-posed equations \eqref{eq:0}, and since
$$\mathcal{L}^{\alp/2}_{\Delta_\infty}[\phi](x)\to 
\Delta_\infty \phi(x)\qquad\text{as}\qquad\alp\to2^-$$
for smooth bounded $\phi$, it also follows that (possibly non-smooth
viscosity) solutions of \eqref{eq:eps} with
$L_\eps=\mathcal{L}^{1-\eps}_{\Delta_\infty}$ will converge as
$\eps\to0$ to the solution of \eqref{eq:00} with
$L_0=\Delta_\infty$. 

 {\em A similar construction can be carried out for
``any'' local (non-divergence form) quasilinear operator and 
``any'' L\'evy diffusion, thereby producing a corresponding quasilinear
L\'evy diffusion}. Under our assumptions this new operator is
well-posed, and can 
approximate the original local operator. This will be explained
in Remark \ref{rem:lim}. In Section \ref{sect:examples} we present a
(formal) stochastic interpretation of our equations and give many more
examples. Included are several nonlocal versions of the
$\infty$-Laplace, the $p$-Laplace, and the  
mean curvature of graph operators; versions that are modulations
of singular integral operators and others based on bounded
nonlocal operators. 
It is interesting to note that the limit operator $L_0$
will include also a drift term ($b\neq0$) whenever the measures
$\mu_{2,\eps}$ in the $L_2$-term has a non-zero mean value near
$z=0$, see assumption \hyp{M$_\eps$} in section
\ref{ssec:loclim}. The reason is that in $L_2$ this mean is not 
compensated by a first-order gradient term as in $L_1$.
 
The literature on nonlocal equations is very large, and we will
restrict the following discussion to nonlocal quasilinear problems and the
questions that we address in this paper: Well-posedness, stability and
asymptotic limits. We will not discuss important
issues such as regularity of solutions or numerical
algorithms. In the literature, typically the nonlocal quasilinear
operators either have ``coefficients'' depending on $u$ or on $Du$
(but see also \cite{CLM2012}). In the former case you find e.g. all
the equations of porous medium type, see e.g. \cite{CV,DPQRV,BIK,EJDT} and 
references therein. The second case is the case that we consider in
this paper. Here the literature seems to be rather
recent. In the calculus of variations, such equations can be obtained
as Euler-Lagrange equations by minimizing fractional Sobolev norms
($W^{p,\frac\alp2}$-norms) \cite{LL2014,DCKP2013,L2014} or truncated
versions of such norms \cite{AMRTBook}. In the first three papers,
(variational) fractional $p$ and $\infty$-Laplace operators are
introduced. In \cite{IN2010}, a different
``variational'' type of nonlocal operators is studied by
non-variational viscosity solution techniques. In one space dimension,
non-variational equations of the type
\begin{align}
u_t+|u_x|^{m}(-\Delta)^{\frac\alp2}u=0\qquad\text{in}\qquad \R^1\times(0,T)
\end{align}
have been studied with viscosity solution techniques in
e.g. \cite{IMR,SDTV} for different values of $m>0$ and
$\alp\in(0,2)$. Such equations are motivated either by
dislocation dynamics or porous medium flow, and along with
their natural extensions to arbitrary space dimensions, they belong to
the class of equations we study here (cf. sections \ref{sect:flex} and \ref{sect:extensions}). Non-variational nonlocal $\infty$-Laplace type
operators are introduced in \cite{BCF2012_2,BCF2012}, and shown in
\cite{BCF2012_2} to be connected to a sequence of Tug of War games. 
But none of these operators
have an implicit form as our operators do. 
Our operators are not variational, and among existing (multi-dimensional)
work they resemble most closely the operators of
\cite{BCF2012_2,BCF2012}, especially \cite{BCF2012_2}. 
However, whereas the operators in \cite{BCF2012_2,BCF2012}
have bounded dependence on the gradient but are discontinuous
where it is zero, our operators are continuous but may have arbitrary
growth in the gradient. The operators in \cite{BCF2012_2,BCF2012}
correspond to normalized $\infty$-Laplacians, which in the local case
take the form (see e.g. \cite{PSSW,Li:Note})
$$\frac1{|Du(x)|^2}\Delta_\infty u(x)=\Big(\frac{Du(x)}{|Du(x)|}\cdot D\Big)^2u(x),$$
while our version \eqref{inf-lap} corresponds to an unnormalized one
(i.e. to $\Delta_\infty u$).
 
In this paper we work with viscosity solutions. This weak solution
concept is not distributional and does not involve integration. It is
very well adapted to the implicit and degenerate form of our equations.
The viscosity solution concept was introduced by Crandall and Lions in the early
1980s to get uniqueness of solutions of first order Hamilton-Jacobi
equations. Later it has been extended to wide rage of problems,
including many nonlocal ones. The standard reference for local
problems is \cite{cil}. For nonlocal problems, we only refer to
\cite{BI,JK} for the basic well-posedness theory for problems posed in
the whole space. But we mention that there is a large literature on
regularity and properties of solutions, asymptotic problems, boundary
conditions, 
approximations and numerics, relation to stochastic processes,
applications etc..
The problems we consider here represent a natural class of nonlocal
quasilinear equations where the viscosity solution techniques still
apply and give comparison and uniqueness.

In fact we have optimized
the assumptions to allow for very general dependence on the gradients
in $L$ and $F$ at the cost of no dependence on the variable $x$! We have also made
an effort to optimize the assumption on $j_i$ and $\mu_i$. In both
cases our assumptions are much more general than in \cite{BI,JK}. In
the doubling of variables argument of the comparison proof, these
differences to \cite{BI,JK} are e.g. reflected in a different choice
of test function and two of the limits being taken in the reverse
order. Reversing the limits is contrary to most viscosity solutions
proofs, but it is essential in our proof. A side effect is that
$\frac{|\bar x-\bar y|^2}{\eps}\not\to 0$ and hence that we cannot
consider equations with non-trivial $x$-dependence. 
Existence, on the other hand,
does not follow from clever modifications of commonly used
arguments. Because of the implicit form of the equations, with the
gradient dependence in $j_1$ and $j_2$, compactness arguments do not
work and it seems not possible to adapt Perron's method
  either. Instead we propose a new
  argument based on a so-called 
  \textit{Sirtaki method} inspired by \cite{BCGJ}. It involves several
regularization and approximation arguments, a Schauder fixed point
argument, and several limit problems. In each limit problem, we obtain
a limit solving the relevant limit equation by the half
relaxed limit method combined with strong comparison results. The
argument is non-standard and highly non-trivial.

In section
\ref{sect:extensions}, we give the extension to the parabolic  case
(Cauchy problems) and to problems with many nonlocal operators
including e.g. Bellman-Isaacs type  equations. A natural open question is to study
less degenerate equations without the assumption that $D_uF>0$, like
 uniformly elliptic or even $p$ and $\infty$-Laplace equations. 
Another one is to consider such
equations on domains with boundary conditions.  Finally, we mention that in 
 an upcoming  paper 
we will study the local limits of
 nonlocal equations  under assumptions that are optimized w.r.t. the
 $x$-dependence. In  this case we also give explicit convergence rates.

\medskip



\subsection*{Outline} We present the main results in
Section~\ref{sect:main} and give several examples and a stochastic
interpretation in Section~\ref{sect:examples}. Then, precise
definitions of viscosity solutions 
appear in Section~\ref{sect:visco} and the proofs of the comparison,
existence and concentration results are given in
Section~\ref{sect:proofs}. In Section~\ref{sect:extensions} we extend
our results to parabolic problems and problems with many nonlocal
operators, and in the appendix at the end of the paper, we give the
proofs of some technical results we need. 

\medskip

\subsection*{Notation} 
The notation $UC(\R^N)$ denotes the set of uniformly continuous
functions defined on $\R^N$ and $BUC(\R^N)$ is the space of bounded, uniformly
continuous functions; $usc$ [resp. $lsc$ ] stands for upper semicontinuous [resp.
lower semicontinuous]; the spaces $C^1 / C^2$ are the spaces of functions
having continuous first-order / second-order derivatives; 
$C^{0,\alpha},C^{1,\alpha}$ stand for the usual H\"older spaces; $C_b$ denotes
the space of continuous, bounded functions;
$\limsup^*$ and $\liminf_*$ are the half-relaxed limits (more precise
definitions in the text where they are used); we denote by $\ind{A}$ the
indicator function of the set $A$; a modulus of continuity is a subadditive
function $\omega:\R_+\to\R_+$ such that $\lim_{s\to 0^+}\omega(s)=0$; the
notation $a\wedge b$ stands for the min of $a$ and $b$, $a\vee b$ is for the
max and $s^+=\max(s,0)$. Note that in this paper $x\in\R^N$ for $N\geq1$ while
$z\in\R^P$ for $P\geq1$; finally, $\M{P}{Q}$ denotes the space of matrices
with $P$ rows and $Q$ columns.

\section{The main results} \label{sect:main}

The results of this section
essentially implies that for ``any'' quasilinear 2nd order local operator
$L_0$, ``any'' well-posed local equation \eqref{eq:00}, and ``any'' nonlocal
L\'evy type operator, there is a corresponding L\'evy type quasilinear
operator $L$ 
and a {\em well-posed} nonlocal equation \eqref{eq:0}. Moreover, the
solution of any such local equation  can be {\em approximated locally
uniformly} by the solutions of a multitude of different nonlocal equations.

\subsection{Comparison, uniqueness, and existence}

Let us first list the assumptions under which we construct a general existence and
uniqueness theory for \eqref{eq:0}:

\noindent\hyp{M} $\mu_1$ and $\mu_2$ are non-negative Radon measures on $\R^P\setminus\{0\}$ 
satisfying
$$\int_{|z|>0}|z|^2\dmu_1(z)+\int_{|z|>0}\dmu_2(z)<\infty\,.$$

\noindent\hyp{J1}  $j_1(p,z)$ and $j_2(p,z)$ are Borel measurable
functions from $\R^N\times\R^P$ into $\R^N$, continuous in $p$ for
a.a. $z\in\R^P$, and for any $r>0$ there is a $C_{j,r}>0$ such that
for all $|p|<r$,   
$$\int_{|z|>0} |j_1(p,z)|^2\dmu_1(z)\leq C_{j,r}.$$

\noindent\hyp{J2}
For any $r>0$, there is a modulus of continuity $\omega_{j,r}$ such
that for all $|p|,|q|<r$, 
    $$\int_{|z|>0}|j_1(p,z)-j_1(q,z)|^2\dmu_1(z)\leq 
    \omega_{j,r}(p-q)\,.$$

    \noindent\hyp{J3} There exists $\delta_0>0$ such that for any
    $r>0$ and $\eps>0$ there exists $\eta>0$ such that
    $$\sup_{|p|<r}\int_A|j_1(p,z)|^2\,\d\mu_1(z)<\eps\,$$
 for every Borel set $A\subset \{0<|z|<\delta_0\}$ such that
 $\int_A|z|^2\mu_1(dz)<\eta$.  


\noindent\hyp{F1} $F:\R\times\R^N\times\R\to\R$ 
is continuous, and for any $u\in\R$, $p\in\R^N$, 
$\ell\leq
\ell'$,  $F(u,p,\ell)\geq F(u,p,\ell')\,.$

\noindent\hyp{F2} For any $M>0$, there exist $\gamma_M>0$ such that
for all $p\in\R^N$, $l\in\R$, and $-M\leq v\leq u\leq M$,
$$F(u,p,l)-F(v,p,l)\geq \gamma_M(u-v)\,.$$

\noindent\hyp{F3} For any $M,r>0$, there exists a modulus of 
continuity $\omega_{M,r}$ such that for any $|u|\leq M$
and $|p|,|q|,|\ell|,|\ell'|\leq r$,
$$\Big|F\Big(u,p,\ell\Big)-
F\Big(u,q,\ell'\Big)\Big|\leq \omega_{M,r}\big(|p-q|+|\ell'-\ell|\big)\,.$$ 

\noindent\hyp{F4} $f\in UC(\R^N)$.

\noindent\hyp{F5} $f\in BUC(\R^N)$ and all quantities in \hyp{F2},
\hyp{F3} are independent of $M$.

We give now the precise results and refer to Section~\ref{sect:proofs} for
the proofs.
	
\begin{theorem} {\rm (Comparison results)}\

    \noindent $(a)$ \brak{Quasilinear case} \label{thm:comp}
Assume \hyp{M}, \hyp{J1}--\hyp{J2}, and \hyp{F4}.
If $u:\R^N\to\R$ is a bounded usc subsolution of \eqref{eq:main.nonlocal} and
$v:\R^N\to\R$ is a bounded lsc supersolution of \eqref{eq:main.nonlocal}, then
  $u\leq v$ in $\R^{N}$.\medskip 

  \noindent $(b)$ \brak{Fully nonlinear case}
    Assume \hyp{M}, \hyp{J1}--\hyp{J2},
    \hyp{F1}--\hyp{F4} hold.
If  $u:\R^N\to\R$ is a bounded usc viscosity subsolution of \eqref{eq:0} and
$v:\R^N\to\R$ is a bounded lsc viscosity supersolution of \eqref{eq:0}, then
$u\leq  v$ in $\R^N$.
\end{theorem}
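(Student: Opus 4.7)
The plan is a doubling-of-variables argument, modified to accommodate the gradient-dependence inside the jump kernels. Assume for contradiction that $M:=\sup_{\R^N}(u-v)>0$, and for small $\alpha,\varepsilon>0$ consider the penalized functional
\[
\Phi_{\alpha,\varepsilon}(x,y) := u(x)-v(y)-\frac{|x-y|^2}{\varepsilon}-\alpha\bigl(\psi(x)+\psi(y)\bigr),
\]
with $\psi(x):=\sqrt{1+|x|^2}$ smooth, coercive, with bounded gradient and Taylor remainder $|\psi(x+h)-\psi(x)-h\cdot D\psi(x)|\leq C|h|^2$. The usc/lsc hypothesis and coercivity guarantee that $\Phi_{\alpha,\varepsilon}$ attains its supremum at some $(\bar x,\bar y)$, and the classical penalization lemma yields $\alpha\psi(\bar x),\alpha\psi(\bar y)\to 0$ with $|\bar x-\bar y|^2/\varepsilon$ uniformly bounded as $\alpha\to 0$ for fixed $\varepsilon$.

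Next, write the viscosity sub- and super-solution inequalities at $\bar x$ and $\bar y$ with gradients $p_u:=2(\bar x-\bar y)/\varepsilon+\alpha D\psi(\bar x)$ and $p_v:=2(\bar x-\bar y)/\varepsilon-\alpha D\psi(\bar y)$, so that $p_u-p_v=O(\alpha)$. Split the singular operator $L_1$ at a cut-off $\delta<\delta_0$, using the test function for $|z|<\delta$ and the actual solution for $|z|\geq\delta$; the near-zero Taylor contribution is bounded by $\pm(\tfrac{1}{\varepsilon}+C\alpha)\int_{|z|<\delta}|j_1(p,z)|^2\dmu_1$, which vanishes as $\delta\to 0$ uniformly in $|p|<r$ thanks to \hyp{J3} and \hyp{M}. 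The operator $L_2$ is evaluated directly on the solution since $\mu_2(\R^P)<\infty$. The non-singular pieces are then controlled via the max-point inequality
\[
\bigl[u(\bar x+h_1)-u(\bar x)\bigr]-\bigl[v(\bar y+h_2)-v(\bar y)\bigr]\leq\frac{2(\bar x-\bar y)\cdot(h_1-h_2)+|h_1-h_2|^2}{\varepsilon}+\alpha R(h_1,h_2),
\]
taken with $h_1=j_1(p_u,z)$ and $h_2=j_1(p_v,z)$: the linear cross-term cancels the compensator $-h_i\cdot p_i$ up to an $O\bigl(\alpha(|h_1|^2+|h_2|^2)\bigr)$ Taylor remainder from $\psi$, absorbed by \hyp{J1}, while the quadratic piece $|h_1-h_2|^2/\varepsilon$ is, by \hyp{J2}, at most $\omega_{j,r}(p_u-p_v)/\varepsilon=\omega_{j,r}(O(\alpha))/\varepsilon$. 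The analogous bound for $L_2$ follows from the continuity of $j_2$ in $p$ combined with dominated convergence against the finite measure $\mu_2$.

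The main obstacle, and the source of the reversed limit order flagged in the introduction, is precisely this $\omega_{j,r}(O(\alpha))/\varepsilon$ term: making it vanish forces us to send $\alpha\to 0$ \emph{before} $\varepsilon\to 0$, the reverse of the usual order in \cite{BI,JK}. I would therefore pass first $\delta\to 0$ (killing the Taylor contribution via \hyp{J3}), then $\alpha\to 0$ (killing the nonlocal mismatch via \hyp{J2} and absorbing the $O(\alpha)$ gradient discrepancy via \hyp{F3}), and only finally $\varepsilon\to 0$, using the strict monotonicity \hyp{F2} to preserve a $\gamma_M(u(\bar x)-v(\bar y))$ lower bound on the subtracted $F$-difference and thereby producing the contradiction $\gamma_M M\leq 0$. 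The side effect of this reversal is that $|\bar x-\bar y|^2/\varepsilon$ need not tend to $0$, which is precisely why no $x$-dependence of $f$ can be tolerated. Part (a) is recovered as the special case $F(u,p,\ell)=u-\ell$, for which \hyp{F1}--\hyp{F3} hold automatically with $\gamma\equiv 1$, so that only \hyp{M}, \hyp{J1}--\hyp{J2}, and \hyp{F4} are actually used.
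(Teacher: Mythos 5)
Your overall strategy matches the paper's: doubling of variables, splitting $L_1$ at a cutoff $\delta$, and the same reversed order of limits ($\delta$ first, then the penalization parameter, then the doubling parameter). You have also correctly located where \hyp{J2} enters and why it forces the reversal. But your choice of test function creates a genuine gap in the treatment of $L_2$.

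With your $\phi(x,y)=\frac{|x-y|^2}{\varepsilon}+\alpha(\psi(x)+\psi(y))$ and $\psi(x)=\sqrt{1+|x|^2}$, the max-point inequality bounds the $L_2$-difference by
\begin{equation*}
\int_{\R^P}\left[\frac{2(\bar x-\bar y)\cdot(h_1-h_2)+|h_1-h_2|^2}{\varepsilon}
+\alpha\bigl(\psi(\bar x+h_1)-\psi(\bar x)\bigr)+\alpha\bigl(\psi(\bar y+h_2)-\psi(\bar y)\bigr)\right]\d\mu_2(z),
\end{equation*}
with $h_i=j_2(p_{u/v},z)$. To conclude you invoke ``dominated convergence against the finite measure $\mu_2$,'' but no dominating function is available: the hypotheses provide only $\mu_2(\R^P)<\infty$ and continuity of $j_2(\cdot,z)$ in $p$ for a.e.~$z$, \emph{not} $\int|j_2(p,z)|\d\mu_2<\infty$ or $\int|j_2(p,z)|^2\d\mu_2<\infty$. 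The terms $|h_1-h_2|^2/\varepsilon$ and $\alpha\|D\psi\|_\infty|h_i|$ need not be $\mu_2$-integrable at all, so pointwise convergence of the integrand (as $\alpha\to0$) does not give convergence of the integral. The paper avoids exactly this problem by using \emph{bounded} test functions: $\varphi$ is a smooth function equal to $|x|^2$ near the origin but globally bounded, and the penalization $\psi(\cdot/R)$ is bounded (with $R\to\infty$ playing the role of your $\alpha\to0$). Then $\varphi(\bar x+h_1-\bar y-h_2)-\varphi(\bar x-\bar y)$ is uniformly bounded by $2\|\varphi\|_\infty$ regardless of how large $j_2(p,z)$ becomes, the integrand tends to zero a.e.\ by Lipschitz continuity of $\varphi$ and $p$-continuity of $j_2$, and dominated convergence applies with a constant dominating function. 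Your $L^\delta$-estimate, by contrast, is fine, because there \hyp{J1} does supply $\int|j_1(p,z)|^2\d\mu_1\le C_{j,r}$.

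A minor additional point: you invoke \hyp{J3} for the $\delta\to0$ step, but \hyp{J3} is not among the hypotheses of the theorem. It is not needed: for fixed $(\alpha,\varepsilon)$ the gradient $p$ at the max point is fixed, and $\int_{|z|<\delta}|j_1(p,z)|^2\d\mu_1\to0$ as $\delta\to0$ follows from \hyp{J1} alone by dominated (monotone) convergence. \hyp{J3} is only needed in the existence proof, where uniformity in $p$ over a family of approximations is required.
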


We have the following immediate consequences of this comparison result.

\begin{corollary}
\label{cor:main}
Under the assumptions of Theorem \ref{thm:comp}:

\noindent $(a)$ \brak{Uniqueness} There is a most one solution $u\in C_b(\R^N)$ of
\eqref{eq:0} (respectively of \eqref{eq:main.nonlocal}). 

\noindent $(b)$ \brak{Uniform continuity} Any solution $u\in C_b(\R^N)$ of
\eqref{eq:0} (respectively of \eqref{eq:main.nonlocal}) belongs to
$BUC(\R^N)$ and
$$\gamma_M\,\omega_u(h)\leq
\omega_f(h)\qquad (\text{respectively}\ \omega_u(h)\leq
\omega_f(h)),$$
where $M=\|u\|_\infty$ and
$\omega_\phi(r)=\displaystyle\sup_{x\in\R^N,|y|<r}|\phi(x+y)-\phi(x)|$
denotes the modulus of continuity of $\phi(x)$.

\noindent $(c)$ \brak{$L^\infty$-bound} If also \hyp{F5} holds with 
$\gamma_M=\gamma$ (independent of $M$), then any solution $u\in C_b(\R^N)$ of
\eqref{eq:0} (respectively of \eqref{eq:main.nonlocal}) satisfies 
$$\gamma\|u\|_\infty\leq\|f\|_\infty\qquad(\text{respectively}\ \|u\|_\infty\leq\|f\|_\infty). $$
\end{corollary}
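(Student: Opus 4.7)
My plan is to derive all three parts of Corollary~\ref{cor:main} directly from the comparison Theorem~\ref{thm:comp}, exploiting two structural features of \eqref{eq:0}: neither $F$ nor $L$ depends explicitly on $x$, and $L$ annihilates vertical shifts, i.e.\ $L[u+c, D(u+c)] = L[u, Du]$ for every $c\in\R$, immediately from the definitions \eqref{I-def}-\eqref{I-def2}.

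Part (a) is a one-liner: any $u \in C_b(\R^N)$ solving \eqref{eq:0} is simultaneously a bounded usc sub- and lsc supersolution, so two applications of Theorem~\ref{thm:comp} with the two candidate solutions exchanged force $u = v$.

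For part (b), I would carry out the standard translation argument adapted to the strictly monotone $F$. For fixed $h \in \R^N$, set $v(x) := u(x+h)$; by $x$-translation invariance of $L$, the function $v$ solves \eqref{eq:0} with right-hand side $f(\cdot+h)$, which still lies in $UC(\R^N)$. Writing $M := \|u\|_\infty$, $c := \gamma_M^{-1}\omega_f(|h|)$, and $\tilde u := u - c$, the monotonicity \hyp{F2} together with the vertical-shift invariance of $L$ yields
\begin{equation*}
F(\tilde u, D\tilde u, L[\tilde u, D\tilde u]) \leq F(u, Du, L[u, Du]) - \gamma_M c = f(x) - \omega_f(|h|) \leq f(x+h),
\end{equation*}
so $\tilde u$ is a bounded usc subsolution of the equation solved by $v$. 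Theorem~\ref{thm:comp} then delivers $u(x) - u(x+h) \leq \gamma_M^{-1}\omega_f(|h|)$; reversing the sign of $h$ gives the matching lower bound, and taking the supremum over $|h| < r$ produces $\gamma_M\omega_u(r) \leq \omega_f(r)$. The quasilinear case \eqref{eq:main.nonlocal} corresponds to $F(u,p,\ell)=u-\ell$, for which $\gamma_M = 1$.

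Part (c) uses constants as barriers. Under \hyp{F5} the monotonicity constant $\gamma$ is independent of $M$, so the candidates $C_\pm := \pm\gamma^{-1}\|f\|_\infty$ satisfy $L[C_\pm, 0] \equiv 0$ and, by \hyp{F2} applied against the zero constant, $F(C_+, 0, 0) \geq F(0,0,0) + \|f\|_\infty$ and $F(C_-, 0, 0) \leq F(0,0,0) - \|f\|_\infty$. With the implicit normalization $F(0,0,0) = 0$ built into the statement, these make $C_+$ a classical supersolution and $C_-$ a subsolution of \eqref{eq:0}, and Theorem~\ref{thm:comp} sandwiches $u$ between them. No real obstacle arises anywhere; the whole corollary is machinery-free once comparison is in hand, and the only quantitative point worth tracking in (b) is that $\gamma_M$ depends only on $\|u\|_\infty$, which is exactly what \hyp{F2} provides.
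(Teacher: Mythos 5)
Your proof is correct and follows essentially the same route as the paper's: two applications of comparison give $(a)$, the translation‐plus‐vertical‐shift argument gives $(b)$ (you shift $u$ downward and compare against $u(\cdot+h)$, whereas the paper shifts $u(\cdot+h)$ up and down and compares against $u$ — the same comparison up to renaming), and constant barriers give $(c)$. One point worth keeping: in $(c)$ you correctly note that for the fully nonlinear equation \eqref{eq:0} the barrier argument requires the normalization $F(0,0,0)=0$; the paper's proof (``$\pm\frac1\gamma\|f\|_\infty$ are super and subsolutions'') tacitly assumes this, and it is automatic only in the quasilinear case $F(u,p,\ell)=u-\ell$.
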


\begin{proof}
$(a)$ is immediate from Theorem \ref{thm:comp}, while $(c)$ follows since
$\pm\frac1\gamma\|f\|_\infty$ are super and subsolutions of
\eqref{eq:0}. To prove $(b)$, note that
$v_\pm(x)=u(x+h)\pm\frac1{\gamma_M}\omega_f(|h|)$
is a super and subsolution of \eqref{eq:0}. By Theorem
\ref{thm:comp}, $v_-(x)\leq u(x)\leq v_+(x)$, and hence
$|u(x)-u(x+h)|\leq \frac1{\gamma_M}\omega(|h|)$. 
\end{proof}

\begin{theorem}[Existence] \label{thm:existence}
    Under the assumptions of Theorem
    \ref{thm:comp}, \hyp{J3}, and \hyp{F5}, there exists a
    unique bounded viscosity solution $u\in BUC(\R^N)$ of \eqref{eq:0}
    (respectively of \eqref{eq:main.nonlocal}).
\end{theorem}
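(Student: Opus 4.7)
The plan is to construct a solution by a Schauder fixed-point argument applied to a heavily regularized problem in which the implicit $Du$-dependence inside $L$ is decoupled, and then to recover a solution of \eqref{eq:0} by a cascade of half-relaxed limits anchored by Theorem \ref{thm:comp}. For each $w$ in a suitable subset of $BUC(\R^N)$, I would first consider the frozen-gradient semilinear problem
\begin{equation}\label{plan:frozen}
F\bigl(u, Du, L[u, Dw]\bigr) = f(x) \qquad \text{in } \R^N,
\end{equation}
in which the implicit $j_i(Du,z)$ inside $L$ are replaced by $j_i(Dw,z)$. For a candidate $u$, the map $x\mapsto L[u,Dw](x)$ is then a given function of $x$, so \eqref{plan:frozen} is a first-order equation of Hamilton--Jacobi type in $u$. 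A fixed point of the induced map $T : w \mapsto u$ will solve the original equation \eqref{eq:0}.

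To make $Dw$ meaningful pointwise and to obtain the regularity needed for a compactness-based fixed point theorem, I would layer several approximations: $(i)$ truncate $\mu_1$ to $\{|z|>\delta\}$ so that $L$ becomes bounded; $(ii)$ replace $Dw$ by $Dw*\rho_\kappa$ for a mollifier $\rho_\kappa$; and $(iii)$ add a vanishing viscosity $-\eta\Delta u$ to \eqref{plan:frozen}. Standard viscosity theory then produces a unique $u^w_{\delta,\kappa,\eta}\in BUC(\R^N)$. The operator $T: w \mapsto u^w_{\delta,\kappa,\eta}$ sends the convex set
$$K = \bigl\{ w \in BUC(\R^N) : \|w\|_\infty \leq \|f\|_\infty/\gamma, \ \omega_w \leq \omega_f/\gamma \bigr\}$$
into itself by Corollary \ref{cor:main}, and $K$ is compact for local uniform convergence by Arzel\`a--Ascoli. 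The delicate point is continuity of $T$: if $w_n\to w$ in $K$, then $Dw_n*\rho_\kappa \to Dw*\rho_\kappa$ locally uniformly, and dominated convergence using \hyp{J1}--\hyp{J2} yields uniform convergence of $L[\phi,Dw_n]$ to $L[\phi,Dw]$ on equicontinuous bounded families of $\phi$; stability of viscosity solutions together with the comparison principle gives $T(w_n)\to T(w)$, and Schauder produces a fixed point $u_{\delta,\kappa,\eta}$ solving the triply-regularized version of \eqref{eq:0}.

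I would then remove the regularizations by sending $\eta \to 0$, $\kappa\to 0$, and $\delta\to 0$ successively. Each passage is handled by the half-relaxed limit method: set $\bar u = \limsup^* u_{\delta,\kappa,\eta}$ and $\underline u = \liminf_* u_{\delta,\kappa,\eta}$, and show by standard stability arguments together with the equi-integrability furnished by \hyp{J3} that $\bar u$ is a subsolution and $\underline u$ a supersolution of the next equation in the chain. Comparison via Theorem \ref{thm:comp} then forces $\bar u\leq \underline u$, while the opposite inequality is automatic, so both half-relaxed limits coincide with a function $u\in BUC(\R^N)$ solving that intermediate equation. After three such passages one obtains a solution of \eqref{eq:0} in $BUC(\R^N)$, which is unique by Corollary \ref{cor:main}(a).

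The main obstacle is twofold: proving continuity of the fixed-point map $T$ in a topology both strong enough to be useful and compact enough for Schauder to apply, and passing to the limit $\delta\to 0$ in the singular integral defining $L_1$ along a sequence of merely continuous viscosity solutions. Both difficulties hinge on hypothesis \hyp{J3}, which provides the uniform-in-$p$ equi-integrability of $|j_1(p,\cdot)|^2\mu_1$ near $z=0$ needed to isolate and control the singular part of $L_1$ by the second-order increments of test functions, an ingredient that is not present in the more classical L\'evy-type existence proofs of \cite{BI,JK}.
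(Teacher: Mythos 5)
Your architecture---Schauder fixed point to decouple the implicit operator, followed by a cascade of half-relaxed limits anchored by strong comparison---matches the paper's Sirtaki strategy at a high level, and you have correctly identified \hyp{J3} as the tool controlling the singular part of $L_1$ in the $\delta\to 0$ passage. But the proposal has a genuine gap in the decoupling step, and it is precisely the point that makes the paper's proof ``highly non-trivial and non-standard.''

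The gap is in recovering the implicit $Du$-dependence from the frozen-gradient problem. At the fixed point $u=w$ of your map $T$, the solution $u_{\delta,\kappa,\eta}$ is a \emph{viscosity} (merely $BUC$) solution of $F(u,Du,L^\delta[u,D w_\kappa])-\eta\Delta u=f$ with $w_\kappa=u_{\delta,\kappa,\eta}\ast\rho_\kappa$. When you apply the half-relaxed limit method, at a maximum point $y_\kappa\to x$ of $u_{\delta,\kappa,\eta}-\phi$ the viscosity inequality replaces the $Du$-slot of $F$ by $D\phi(y_\kappa)$, but the slot \emph{inside} $j_i$ remains the frozen, externally-defined quantity $Dw_\kappa(y_\kappa)=D(u_{\delta,\kappa,\eta}\ast\rho_\kappa)(y_\kappa)$. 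There is no mechanism in the half-relaxed-limit machinery to identify this with $D\phi(y_\kappa)$. Worse, as $\kappa\to0$ these mollified gradients need not converge at all (for merely $BUC$ data one has $\|D(w\ast\rho_\kappa)\|_\infty\lesssim\omega_w(\kappa)/\kappa$, which blows up unless $w$ is Lipschitz), and even under a uniform Lipschitz bound one would only obtain weak-$*$ convergence, not the pointwise convergence at $y_\kappa$ that the argument requires. So the $\kappa\to0$ limit does not produce a viscosity subsolution of an equation in which $D\phi$ sits inside $j_i$. The ``frozen gradient never catches up with $D\phi$,'' and this is precisely why the paper does \emph{not} iterate on a frozen-gradient version of the equation.

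The paper's escape route is to first work on a bounded ball $B_R$ with Dirichlet data and a Neumann-type projected nonlocal term, truncated and mollified L\'evy measures, and vanishing viscosity, so that the iterated problem
$u-\eps\Delta u=T_M[L^R_k[v,Dv]]+f$ is a \emph{linear} elliptic Dirichlet problem with $C^{0,\theta_0}$ right-hand side. Classical Schauder theory then gives a $C^{2,\theta_0}(\ol B_R)$ solution, and Schauder's fixed point theorem runs in $C^{1,\theta_0}(\ol B_R)$ with compactness coming from $W^{2,p}$-estimates on the \emph{bounded} domain. The crucial payoff is that the resulting fixed point $u_{R,n}$ is a \emph{classical} $C^2$ solution: at any maximum point $y$ of $u_{R,n}-\phi$ one has $Du_{R,n}(y)=D\phi(y)$ \emph{as functions, not just as viscosity jets}, so the gradient appearing inside $j_i$ in the equation really is $D\phi(y)$. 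That identity is what lets the implicit structure survive the passages $R\to\infty$ and then $n\to\infty$, and it cannot be replaced by a frozen-$Dw$ argument for merely continuous iterates on $\R^N$. The remaining ingredients you mention (mollifying $j$ and $f$, truncating $j_1$, using \hyp{J3} via Vitali's theorem for the $\delta$-singularity) are indeed used, but only \emph{after} the bounded-domain classical step has produced a viscosity solution under the auxiliary hypotheses \hyp{J1'}, \hyp{J2'}, \hyp{F5'}. You should therefore replace your frozen-gradient, whole-space iteration by a bounded-domain classical iteration in order to close this gap.

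Two smaller points. First, the frozen problem $F(u,Du,L[u,Dw])=f$ is not a first-order Hamilton--Jacobi equation: $L[u,Dw]$ still acts nonlocally on the unknown $u$; only $L[w,Dw]$ would be a known source term. Second, applying Schauder's theorem in the Fr\'echet space $C(\R^N)$ with the topology of local uniform convergence is admissible (Schauder--Tychonoff), but the continuity and compactness verifications you sketch need the uniform $L^\infty$ and modulus bounds from Corollary \ref{cor:main}; in the paper those are replaced by the much more robust $W^{2,p}(B_R)$ compactness, which is another advantage of the bounded-domain formulation.
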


Let us now briefly comment on the assumptions.

\begin{remark} 
\label{rem:assump}
$(i)$ $\mu_1$ and $\mu_2$ are L\'{e}vy
measures \cite{A:Book} by \hyp{M}. Conversely, any L\'evy measure
$\mu$ can be written as $\mu_1+\mu_2$ for $\mu_1$ and $\mu_2$
satisfying \hyp{M}: 
$$\mu=\mu\mathds{1}_{|z|<1}+\mu\mathds{1}_{|z|\geq1}=:\mu_1+\mu_2\,.$$
\noindent $(ii)$ Assumptions on $j$ are optimized w.r.t. the
dependence in $p$ at the cost of no dependence on $x$! A typical
example is  
$$j_i(p,z)=j(p)z,$$
 where $z\in\R^P$ and
 $j:\R^{N}\to\M{N}{P}$ only needs to be continuous. 
 \hyp{J1} and \hyp{J3} follow 
from the stronger assumption $|j_i(p,z)|\leq c|z|$ for
$z$ near $0$. \hyp{J3} implies
that $\big\{\frac{|j_1(p,z)|^2}{|z|^2}\big\}_{|p|<r}$ is
$|z|^2\mu_1(dz)$ equi-integrable on
$\{0<|z|<\delta\}$ for any $\delta\leq\delta_0$, \textit{cf.} Appendix
\ref{app:equiint}. We need it to construct solutions under our general
assumptions but not for comparison.

\noindent $(iii)$ By \hyp{M}, \hyp{J1}, and a Taylor
expansion, $L[\phi,D\phi](x)$ is well-defined for any 
$\phi\in C^2(\R^N)\cap C_b(\R^N)$. 

\noindent $(iv)$ \hyp{F2} implies degenerate ellipticity and strict
monotonicity in $u$, while \hyp{F3} allows for very general $p$-dependence at
the cost no $x$-dependence. Compare \hyp{F3} to e.g.
assumption (3.14) in \cite{cil}. 

\noindent $(v)$ The assumptions on integrability and $p$-dependence of
$j$ and the $(p,l)$-dependence of $F$ of this paper are much more
general than e.g. in \cite{JK,BI}. 
\end{remark}

\subsection{Local limits} 
\label{ssec:loclim}
We also study the convergence of solutions of
the nonlocal equation \eqref{eq:eps} to the local equation
\eqref{eq:00}, including separate results for the quasilinear case where
\eqref{eq:eps} and \eqref{eq:00} take the simpler forms 
\begin{align}
\label{eq:lineps}
-L_\eps[u_\eps,Du_\eps](x)+ u_\eps(x) &= f(x)\quad\text{in}\quad \R^N\,,\\
\label{eq:local}
            -L_0(Du,D^2u)+
             u(x) &= f(x)\quad\text{in}\quad \R^N\,,
\end{align}
where the local operator $L_0$ is precisely defined in Definition~\ref{defsigma} below.
Concerning \eqref{eq:lineps}, we use the decomposition
$L_\eps=L_{1,\eps}+L_{2,\eps}$ with 
$$L_{i,\eps}[\phi,D\phi](x)=\int_{\R^P}
\phi(x+j_i(D\phi(x),z)-\phi(x)- 
       \delta_{i,1}\, j_i(D\phi(x),z)\cdot
       D\phi(x)\ d\mu_{i,\eps},\qquad\text{$i=1,2$},$$
where $\delta_{i1}=1$ if $i=1$ and $0$ otherwise. In order to prove the
convergence result as $\eps\to0$ we need the following additional 
assumptions: 
\smallskip

\noindent\hyp{\Meps} $\mu_\eps=(\mu_{1,\eps},\mu_{2,\eps})$
satisfies \hyp{M} for every $\eps>0$,
and there exists $A_1,A_2\in\M{P}{N}$ and $a\in\R^P$ such that for every
$Y\in\M{P}{P}$, $q\in \R^P$, and $\delta>0$,  as $\eps\to0$
\begin{align*}
&\int_{|z|<\delta}z^T Yz\,  \dmu_{1,\eps}\to \tr[A_1^T YA_1],\\
&\int_{|z|<\delta} (z^TYz+ q\cdot z)\, \dmu_{2,\eps}\to \tr[ A_2^T
YA_2]+a \cdot q,\\ 
&\int_{|z|>\delta}(\dmu_{1,\eps}+\dmu_{2,\eps})\to0.
\end{align*}

\noindent\hyp{J4} For $i=1,2$, the function $(p,z)\mapsto j_i(p,z)$ is continuous,
$z$-differentiable at $z=0$ locally uniformly in $p$, $j_i(p,0)=0$,
and the function $\sigma_i:\R^N\to\M{N}{P}$, defined by
$$\sigma_i(p):=D_zj_i(p,0),\quad\text{is continuous.}$$

\begin{definition}\label{defsigma} For any vector $p\in\R^N$ and 
    matrix $X\in\M{N}{N}$, we define:
    $$L_0(p,X):=\frac{1}{2}\tr\big[\tilde\sigma_1(p)\tilde\sigma_1(p)^TX\big]+
    \frac{1}{2}\tr\big[\tilde\sigma_2(p)\tilde\sigma_2(p)^TX\big]+b(p)\cdot
    p,$$
where $\tilde\sigma_i(p):=\sigma_i(p)A_i$, $i=1,2$, $b(p):=a^T\sigma_2(p)$, for
$A_1$, $A_2$, $a$, $\sigma_1$, $\sigma_2$  given by \hyp{M$_\eps$}
and \hyp{J4}. 
\end{definition}

The limit result is the following:
\begin{theorem} {\rm (Local limits)} 
\label{thm:localization}
Let $L_0$ be given by Definition \ref{defsigma}. 

\noindent $(a)$ \brak{Quasilinear case} Assume \hyp{\Meps},
\hyp{J1}--\hyp{J4} and \hyp{F5}. Then any sequence of solutions  $u_\eps$ of
\eqref{eq:lineps} converges locally uniformly as $\eps\to0$ to the solution $u$ of
\eqref{eq:local}.\\

\noindent $(b)$ \brak{Fully nonlinear case} Assume \hyp{\Meps},
\hyp{J1}--\hyp{J4}, and
\hyp{F1}--\hyp{F5}. Then any sequence of solutions  $u_\eps$ of
\eqref{eq:eps} converges locally uniformly as $\eps\to0$ to the solution $u$ of
\eqref{eq:00}.\\
\end{theorem}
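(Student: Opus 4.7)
The plan is to combine uniform a priori bounds on the family $\{u_\eps\}_{\eps>0}$ with the half-relaxed limits method, and then conclude by comparison for the local limit equation. First, Corollary~\ref{cor:main}(b)--(c) applied to each $u_\eps$ yields the uniform bound $\|u_\eps\|_\infty\leq \gamma^{-1}\|f\|_\infty$ and a common modulus of continuity $\omega_f/\gamma$, so $\{u_\eps\}$ is locally precompact in $C(\R^N)$. I then define the half-relaxed limits $\bar u:=\limsup^* u_\eps$ and $\underline u:=\liminf_* u_\eps$ and aim to show that $\bar u$ is a viscosity subsolution and $\underline u$ a viscosity supersolution of \eqref{eq:00} (respectively \eqref{eq:local} in the quasilinear case).

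The central technical step is the stability property that, for any $\phi\in C^2(\R^N)\cap C_b(\R^N)$ and any $x_\eps\to x_0$, one has $L_\eps[\phi,D\phi](x_\eps)\to L_0(D\phi(x_0),D^2\phi(x_0))$ as $\eps\to 0$. Fix $\delta>0$ small. The $\{|z|>\delta\}$ part of both $L_{1,\eps}$ and $L_{2,\eps}$ is bounded by $C(\|\phi\|_\infty,\|D\phi\|_\infty)\int_{|z|>\delta}(\dmu_{1,\eps}+\dmu_{2,\eps})$, which vanishes by the third line of \hyp{M$_\eps$}. On $\{|z|<\delta\}$, \hyp{J4} gives $j_i(p,z)=\sigma_i(p)z+o(|z|)$ locally uniformly in $p$, so a second-order Taylor expansion of $\phi$ yields
$$\phi(x_\eps+j_1(p,z))-\phi(x_\eps)-j_1(p,z)\cdot p = \tfrac12 z^T\sigma_1(p)^T D^2\phi(x_\eps)\sigma_1(p) z + R_1(\eps,z),$$
and analogously for $L_{2,\eps}$ with an additional linear contribution $\sigma_2(p)z\cdot D\phi(x_\eps)$. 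Taking $p=D\phi(x_\eps)$ and integrating the principal parts against the first two lines of \hyp{M$_\eps$} produces exactly
$$\tfrac12\tr\big[\tilde\sigma_1(p)\tilde\sigma_1(p)^T D^2\phi(x_\eps)\big]+\tfrac12\tr\big[\tilde\sigma_2(p)\tilde\sigma_2(p)^T D^2\phi(x_\eps)\big]+b(p)\cdot D\phi(x_\eps),$$
which is $L_0(D\phi(x_\eps),D^2\phi(x_\eps))$ and converges to the claimed limit by continuity of $\tilde\sigma_i$ and $b$ (via \hyp{J4}). The remainders are controlled via $|j_i(p,z)|^2\leq C|z|^2$ near $z=0$ (from \hyp{J1} and \hyp{J4}) and the equi-integrability afforded by \hyp{J3}, after which one lets $\delta\to 0$.

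With this stability in hand, the classical half-relaxed limits machinery applies: whenever $\bar u-\phi$ has a strict local maximum at $x_0$ for some $\phi\in C^2_b(\R^N)$, one extracts maximum points $x_\eps\to x_0$ of $u_\eps-\phi$, uses $\phi$ as an admissible test function in the viscosity inequality $F(u_\eps(x_\eps),D\phi(x_\eps),L_\eps[\phi,D\phi](x_\eps))\leq f(x_\eps)$, and passes to the limit via the stability step together with continuity of $F$ and $f$; this shows $\bar u$ is a subsolution of \eqref{eq:00}, and symmetrically $\underline u$ is a supersolution. Since $L_0$ is a standard second-order degenerate elliptic quasilinear operator with continuous coefficients and $D_uF>0$ (by \hyp{F2}), the local counterpart of Theorem~\ref{thm:comp}, proved by the usual Ishii lemma, yields $\bar u\leq\underline u$. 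Combined with the trivial $\underline u\leq \bar u$ and the uniform equicontinuity from the first step, this upgrades to locally uniform convergence of the full family $u_\eps$ to the unique bounded solution $u$ of \eqref{eq:00}. I expect the main obstacle to be a clean justification of the vanishing of the Taylor remainder against the concentrating measures $\mu_{i,\eps}$: the $o(|z|^2)$ error must not pick up a spurious contribution from the mass encoded by $A_1,A_2,a$, which is exactly where \hyp{J3}, interpreted as an equi-integrability statement uniform in $\eps$, is essential.
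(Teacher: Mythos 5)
Your proposal follows essentially the same route as the paper's proof: half-relaxed limits $\bar u,\underline u$, a stability step showing $L_\eps[\phi,D\phi](x_\eps)\to L_0(D\phi(x),D^2\phi(x))$ via Taylor expansion under \hyp{J4} and \hyp{\Meps}, comparison for the local limit equation (Lemma~\ref{comp_loc}), and an upgrade from $\bar u=\underline u$ to locally uniform convergence. The Taylor computation yielding $L_0$ is correct, as is the treatment of the $\{|z|>\delta\}$ tails via the third line of \hyp{\Meps}.

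Two remarks on where you deviate. First, the burden you place on \hyp{J3} to control the Taylor remainders against the concentrating measures is misplaced, and \hyp{J3} as stated would not deliver it: it is an equi-integrability statement for the fixed measure $\mu_1$, not for the family $\{\mu_{1,\eps}\}$. What actually closes the estimate is \hyp{J4} alone, which gives $j_i(p,z)=\sigma_i(p)z+o(|z|)$ locally uniformly in $p$, so the second-order Taylor remainder of $\phi$ along $j_i$ is $o(|z|^2)$ uniformly in $\eps$; pairing this with the uniform bound $\sup_\eps\int_{|z|<\delta}|z|^2\,\dmu_{1,\eps}<\infty$, which is implicit in the first line of \hyp{\Meps} (take $Y=I$), gives an error that is $o_\delta(1)$ uniformly in $\eps$. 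This is exactly what the paper does in Lemma~\ref{lem:limit}; \hyp{J3} is not invoked there (it is used only in the existence proof). Second, the paper obtains locally uniform convergence from $\bar u=\underline u$ by the standard Barles--Perthame argument (extracting convergent maximizers of $u_\eps-u$ over $\ol B_R$ and using the definition of $\bar u$ and continuity of $u$), without appealing to equicontinuity of $\{u_\eps\}$. Your route via the uniform modulus from Corollary~\ref{cor:main}-$(b)$ and \hyp{F5} is also correct and perhaps more direct, but it is an alternative rather than what the paper's proof uses.
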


\begin{remark}\label{rem:lim}
$(i)$ \hyp{\Meps} is a concentration assumption implying
e.g. $z^TYz\,\mu_{1,\eps}(\dz)\rightharpoonup \tr[A_1^TYA_1]
\delta_0$ in measure. This is a convergence
result for measures in $\R^P$ and not in $\R^N$. 
Note that $a$ plays a role only for the $L_2$-part of
$L$. Illustrative examples are the following singular and truncated 
$(2-\eps)$-stable like L\'{e}vy measures: 
\begin{align*}
\mu_{1,\eps}(\d
z)&=\eps\frac{g(z)}{|z|^{N+2-\eps}}\mathds{1}_{|z|<1}\d
z\qquad\text{where}\qquad \lim_{z\to 0}g(z)=g(0)\neq 0,\\
\mu_{2,\eps}(\d
z)&=\eps\frac{g(z)}{|z|^{N+2-\eps}}\mathds{1}_{\eps<|z|<1}\d z
\qquad\text{where}\qquad g \text{ is $C^1$ at $z=0$ and } g(0)\neq0.
\end{align*}
Both satisfy \hyp{\Meps}: $\mu_{1,\eps}$ with $A_1=g(0)I$, $A_2=0$, $a=0$ and
$\mu_{2,\eps}$ with $A_1=0$, $A_2=g(0)I$, $a=Dg(0)$.

\noindent $(ii)$ By \hyp{J4} and Definition \ref{defsigma},
$$L_0(p,X)=\tr[A_1Y_1A_1]+\tr[A_2Y_2A_2]+a\cdot q$$
for $Y_i=\sigma_i(p)^TX\sigma_i(p)\in\M{P}{P}$, $i=1,2$, and
$q=\sigma_2(p)p\in\R^P$. 

\noindent $(ii)$ If \hyp{F5} and \hyp{J4} (and \hyp{F1}--\hyp{F3}) hold,
there exists a unique viscosity solution of \eqref{eq:local} (and of
\eqref{eq:00}) satisfying the strong comparison principle, cf. Theorem
5.1 in \cite{cil} and Lemma \ref{comp_loc} below. 

 \noindent $(iii)$ We may specify (``any'') $\sigma(p)$ first, and then for
 every L\'evy measure $(\mu_{1,\eps},\mu_{2,\eps})$ satisfying the
 concentration assumption \hyp{M$_\eps$}, we get a nonlocal
 approximation $L_\eps$ of the local operator $L_0$. Moreover, the
 corresponding equations, \eqref{eq:eps} and \eqref{eq:0}, are
 well-posed with solutions that converge to one another under very
 general assumptions.
\end{remark}

\section{Stochastic interpretation and examples}\label{sect:examples}

\subsection{Stochastic interpretation}
Formally equation \eqref{eq:0} is always the Dynamic Programming
Equation of an implicitly defined stochastic control problem or
game. E.g. the solution $u$ of \eqref{eq:main.nonlocal} satisfies
formally 
\begin{equation}
    u(x)=\mathbb{E}^x\left(\int_0^\infty e^{-t}f(X_t)dt\right)\label{u_def}
\end{equation}    
where $X_t$ is a pure jump L\'evy-Ito process satisfying
\begin{equation}
  X_t=x+\int_0^t\int_{|z|>0}j_1(Du(X_{s^-}),z) \tilde N_1(\dz,\ds) +
\int_0^t\int_{|z|>0}j_2(Du(X_{s^-}),z) N_2(\dz,\ds)\,,\label{X_def}
\end{equation}
where $\tilde N_1$ is a compensated Poisson random measure, $N_2$ is a
finite intensity Poisson random measure, 
and $\mathbb{E}^x$ is the expectation w.r.t. the law of $X$ (which starts at
$x$). By a L\'{e}vy-Ito
process we mean a {\em L\'{e}vy type stochastic integral} defined in
Chapter 4.3.3 in \cite{A:Book}, and we refer to
e.g. \cite{A:Book,CT:Book} for definitions of the other probabilistic
terms mentioned above. Formally, the generator $A$ of $X_t$ is given
by the formula
$$Au(x)=L[u,Du](x)$$
for $u$ in the domain of $A$ (equation
(6.36) in \cite{A:Book}). Moreover, $X_t$ generates a semigroup $T_t$ 
defined by $T_t\phi(y)=\mathbb{E}^y\phi(X_t^y)$ with the convention that 
$X_0^y=y$ almost surely, and $u$ is then the $1$-resolvent $R_1$ (chapter 3 in
\cite{A:Book}) of this the semi-group applied to $f$, \textit{i.e.}
$R_1f(x)=1u(x)$. By 
the resolvent identity,  
$$u-Au=(I-A)R_1f(x)=f(x)\qquad\text{in}\qquad\R^N,$$
i.e. $u$ satisfies equation \eqref{eq:main.nonlocal} at least
formally. To make this discussion rigorous, we need the assumptions of section
\ref{sect:main} and some additional ones including smoothness
of $u$. Following chapter 6.7 in \cite{A:Book}, it suffices to assume in
addition that Assumptions 6.6.1 and 6.7.1 of \cite{A:Book} hold. We
do not state them here, we only remark that they are satisfied if e.g. 
$$j_1(p,z)=j(p)z\ \text{with}\ j\in W^{1,\infty}_{\mathrm{loc}},\qquad 0\leq
\mu(\dz)\leq \frac {C\, 
    \dz}{|z|^{N+\alp}}\ \text{with}\ \alp\in(0,2), \qquad u\in
C^2_0\ \text{and}\ f\in C_0\,.$$
Note that then $Du$ is bounded and Lipschitz. In this case it follows
from Theorem 6.7.4 of \cite{A:Book} that $T_t$ is a Feller semi-group
with generator $A$ as above and that $u$ is in the domain of $A$. By
Theorem 3.2.9 of \cite{A:Book} the resolvent $R_1$ exists and satisfies
the resolvent identity above for any $f\in C_0(\R^N)$.

We have the following result:
\begin{proposition}
If $u$ and $X_t$ satisfy the assumptions mentioned above and
\eqref{u_def} and \eqref{X_def} hold (in the 
strong sense), then 
$u$ is a classical solution of \eqref{eq:main.nonlocal}.   
\end{proposition}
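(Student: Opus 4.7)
The plan is to identify $u$ with the $1$-resolvent of the semigroup $T_t$ generated by $X_t$, and then apply the resolvent identity, invoking the Feller semigroup theory from \cite{A:Book} that the authors have already flagged as applicable under the stated assumptions.

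First, I would use Fubini's theorem (legitimate since $f\in C_0$ so $f$ is bounded and $e^{-t}$ is integrable on $[0,\infty)$) to rewrite \eqref{u_def} as
$$u(x)=\mathbb{E}^x\int_0^\infty e^{-t}f(X_t)\dt=\int_0^\infty e^{-t}T_tf(x)\dt=R_1f(x),$$
which is precisely the defining formula of the $1$-resolvent $R_1$ of the semigroup $T_t\phi(y)=\mathbb{E}^y\phi(X_t^y)$ applied to $f$. Thus $u=R_1f$.

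Next, I would invoke the semigroup/generator theory. The hypotheses listed just before the statement (namely $j_1(p,z)=j(p)z$ with $j\in W^{1,\infty}_{\mathrm{loc}}$, the pointwise bound on the L\'evy measure, $u\in C^2_0$ and $f\in C_0$) imply that Assumptions $6.6.1$ and $6.7.1$ of \cite{A:Book} hold, since $Du$ is then bounded and Lipschitz. Consequently, Theorem $6.7.4$ of \cite{A:Book} applies: $T_t$ is a Feller semigroup on $C_0(\R^N)$, $u$ lies in the domain $\mathcal{D}(A)$ of its generator $A$, and the generator acts pointwise as
$$Au(x)=L[u,Du](x)\qquad\text{for every }x\in\R^N.$$
Since $f\in C_0(\R^N)$, Theorem $3.2.9$ of \cite{A:Book} further guarantees that $R_1f$ lies in $\mathcal{D}(A)$ and satisfies the resolvent identity $(I-A)R_1f=f$.

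Combining these two ingredients, $u=R_1f$ is in $\mathcal{D}(A)$ and satisfies $u-Au=f$ pointwise, i.e.
$$u(x)-L[u,Du](x)=f(x)\qquad\text{in}\qquad\R^N,$$
which is exactly \eqref{eq:main.nonlocal}. Since $u\in C^2_0$, the equation holds in the classical sense.

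The main obstacle is really subsumed into the cited theorems, whose hypotheses must be checked: one needs Assumptions $6.6.1$ and $6.7.1$ of \cite{A:Book} and the identification of $A$ on $C^2_0$. A self-contained alternative, which avoids the Feller machinery, would be to apply the L\'evy-Ito formula (Theorem $4.4.7$ of \cite{A:Book}) to $e^{-t}u(X_t)$, take $\mathbb{E}^x$, let $t\to\infty$ using $u\in C_0$ to kill the boundary term, and compare with \eqref{u_def} to obtain $R_1\big(u-L[u,Du]-f\big)(x)=0$, from which pointwise equality follows by injectivity of $R_1$ on $C_0$. The delicate step there is justifying the vanishing expectation of the compensated Poisson integrals and the interchange of limits, which is precisely where the integrability of $|j_1|^2$ against $\mu_1$ enters.
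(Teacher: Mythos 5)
Your argument is correct and is essentially the paper's own: the paper justifies the proposition exactly through the preceding discussion, identifying $u=R_1f$ as the $1$-resolvent of the Feller semigroup of $X_t$, invoking Theorem 6.7.4 of \cite{A:Book} (under Assumptions 6.6.1 and 6.7.1, verified by the stated hypotheses) to get $Au=L[u,Du]$ with $u$ in the domain of $A$, and Theorem 3.2.9 of \cite{A:Book} for the resolvent identity $(I-A)R_1f=f$. Your alternative via the L\'evy--It\^o formula is a reasonable self-contained variant but is not needed here.
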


\subsection{Isotropic operators involving the fractional Laplacian}
\label{sect:flex}
We will explain why products of the fractional Laplacian
and a positive scalar function of the gradient (cf. \eqref{flex}),
are operators of the type we consider here in this paper.

By the scaling properties of the Levy measure $\frac{c_{N,\alp}}{|z|^{N+\alp}}dz$ and a change of variables,
$$-a^\alp(-\Delta)^\alp u(x)=\int_{\R^N} u\big(x+az\big)-u(x)
-az \cdot Du(x)\ \frac{c_{N,\alp} dz}{|z|^{N+\alp}}\qquad\text{for
  all}\qquad a\geq0,\ \alp\in(0,2),$$
and hence for every $x$,
$$-a(Du(x)) (-\Delta)^\alp u(x)=\int_{\R^N} u\big(x+a^{\frac1\alp}(Du(x))z\big)-u(x)
-a^{\frac1\alp}(Du(x))z \cdot Du(x)\ \frac{c_{N,\alp} dz}{|z|^{N+\alp}}.$$
It is immediate that assumptions \hyp{M}, \hyp{J1}--\hyp{J3}
are all satisfied for this
operator when $\alp\in(0,2)$ and $a\in C(\R^N;\R^+)$.

In one space dimension and with $a(p)=|p|^{m-1}$, $m>0$, this operator
appear in models of dislocations in crystals ($m=1$) \cite{Head,IMR,BKM},
and in certain nonlocal porous medium models ($m>1$) \cite{SDTV} as
the (integrated) equation for the cumulative distribution function. 

\subsection{Examples} 
We introduce now some classes of quasilinear nonlocal operators with special focus
on operators of $p$-Laplacian, $\infty$-Laplacian, and
mean curvature of graph type. Recall the definitions of the local and
fractional $\infty$-Laplacian in \eqref{inf-lap-loc} and
\eqref{inf-lap}. To define other nonlocal operators we need the
following Lemma. 
\begin{lemma}Let $p\geq 1$, $r_p=-1+\sqrt{p-1}$ and $I$ be $N\times N$ identity matrix.

    \noindent $(a)$ $\displaystyle I+(p-2) \frac{\xi\otimes
  \xi}{|\xi|^2}=a_p(\xi)a^T_p(\xi)\qquad\text{where}\qquad a_p(\xi):=I+r_p\frac{\xi\otimes
  \xi}{|\xi|^2}.$

\noindent $(b)$ $\displaystyle I- \frac{\xi\otimes
  \xi}{1+|\xi|^2}=\tilde a(\xi)\tilde
a^T(\xi)\qquad\text{where}\qquad\tilde a(\xi):=I-\frac{\xi\otimes 
  \xi}{|\xi|^2}\Big(1-\frac{1}{\sqrt{1+|\xi|^2}}\Big).$

\noindent $(c)$ The functions $\tilde a(\xi)$ and $|\xi|^{\frac{p-2}{2}}a_p(\xi)$, $p\geq2$, are
continuous in $\R^N$. 
\end{lemma}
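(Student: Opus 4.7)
The plan rests on the single observation that for $\xi\neq 0$ the matrix
$P(\xi):=\dfrac{\xi\otimes\xi}{|\xi|^2}$ is the orthogonal projection onto the line $\R\xi$, hence $P=P^T$ and $P^2=P$. Every product $a_p a_p^T$ and $\tilde a\tilde a^T$ appearing in the statement is then of the form $(I+tP)(I+tP)=I+(2t+t^2)P$ for some scalar $t=t(\xi)$, and the identities (a), (b) reduce to verifying that the coefficient $2t+t^2$ matches the one in the left-hand side.

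For (a) I would plug in $t=r_p=-1+\sqrt{p-1}$ and use that $(r_p+1)^2=p-1$, which instantly gives $r_p^2+2r_p=p-2$, the coefficient of $P$ in $I+(p-2)P$. For (b) I would set $c(\xi):=1-\dfrac{1}{\sqrt{1+|\xi|^2}}$ so that $\tilde a=I-c\,P$, and compute
\[
2c-c^2=c(2-c)=\left(1-\tfrac{1}{\sqrt{1+|\xi|^2}}\right)\left(1+\tfrac{1}{\sqrt{1+|\xi|^2}}\right)
=1-\tfrac{1}{1+|\xi|^2}=\tfrac{|\xi|^2}{1+|\xi|^2},
\]
which is exactly the coefficient of $P$ on the left of (b). Both identities are thus purely algebraic once the projection property of $P$ is used.

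For (c) the only delicate point is continuity at $\xi=0$, since $P(\xi)$ itself is discontinuous there. For $\tilde a(\xi)=I-c(\xi)P(\xi)$ I would use the Taylor expansion $c(\xi)=\tfrac12|\xi|^2+O(|\xi|^4)$ as $|\xi|\to 0$ together with the uniform bound $|P(\xi)_{ij}|\le 1$, so that $c(\xi)P(\xi)\to 0$, giving $\tilde a(0)=I$. For $|\xi|^{(p-2)/2}a_p(\xi)=|\xi|^{(p-2)/2}I+r_p|\xi|^{(p-2)/2}P(\xi)$ with $p\ge 2$, the case $p=2$ is trivial (there $r_p=0$); for $p>2$ the prefactor $|\xi|^{(p-2)/2}\to 0$ dominates the bounded factor $P(\xi)$, and the expression is continuous at the origin with value $0$.

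The whole lemma is therefore essentially a bookkeeping exercise in rank-one perturbations of the identity; the only step that requires a little care is the continuity statement in (c), where the singularity of $P(\xi)$ at the origin has to be absorbed by the vanishing scalar prefactors $c(\xi)$ and $|\xi|^{(p-2)/2}$.
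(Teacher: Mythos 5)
Your proof is correct and follows essentially the same route as the paper, which merely notes the proof is ``straightforward'' from the projection property of $\xi\otimes\xi/|\xi|^2$ and the algebraic identity for $r_p$. One small point worth flagging: the paper's hint states the identity as $r_p^2+r_p=p-2$, but the correct relation (which you derive) is $r_p^2+2r_p=(r_p+1)^2-1=p-2$; the paper has a typo there.
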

The proof is straightforward, using that $r_p^2+r_p=p-2$. 
In view of the lemma, 
\begin{align*}
\Delta_pu(x)&=\mathrm{div}\Big(|Du(x)|^{p-2}Du(x)\Big)=|Du(x)|^{p-2}\Big(\Delta
u(x)+(p-2)\frac{\Delta_\infty u(x)}{|Du(x)|^2}\Big)\\[0.1cm]
&=\tr\big[|Du(x)|^{p-2}D^2u(x)\big]+(p-2) \tr\big[|Du(x)|^{p-4}Du(x)Du(x)^TD^2u(x)\big]\\[0.2cm] 
&=\tr\big[\sigma_p(Du(x))\sigma_p^T(Du(x))D^2u(x)\big]\qquad\text{where}\qquad
\sigma_p(\xi)=|\xi|^{\frac{p-2}{2}}a_p(\xi),\\[0.4cm]
H[u](x)&=\mathrm{div}\Big(\frac{Du(x)}{\sqrt{1+|Du(x)|^2}}\Big)=\frac1{\sqrt{1+|Du(x)|^2}}\Big(\Delta
u(x)-\frac{\Delta_\infty u(x)}{1+|Du(x)|^2}\Big)\\
&=\tr
\Big(\tilde\sigma(Du(x))\tilde\sigma^T(Du(x))D^2u(x)\Big)\qquad\text{where}
\qquad  
\tilde\sigma(\xi)= \frac{\tilde a(\xi)}{(1+|\xi|^2)^{\frac14}},
\end{align*}
where $\tilde \sigma$ and $\sigma_p$ are continuous for $p\geq 2$. 

\subsubsection*{First type of  examples:}
Quasilinear versions of every
generator of pure jump L\'{e}vy processes
\cite{A:Book}. E.g. nonlocal fractional Laplace type operators,
\begin{align*}
    &\mathcal{L}^{\alp/2}_{\Delta_\infty}[u](x)\quad\text{already defined in \eqref{inf-lap},}\\
&\mathcal{L}^{\alp/2}_{\Delta_p}[u](x)=\int_{\R^N} u\big(x+\sigma_p(Du(x))z)-u(x)
-\sigma_p(Du(x))z\cdot Du(x)\,\ind{|z|<1}\frac{c_\alp \dz}{|z|^{N+\alp}},\\
& \tilde{\mathcal{L}}^{\alp/2}_{\Delta_p}[u](x)=\int_{\R^N} u\big(x+|Du(x)|^{\frac{p-2}2}z)-u(x)
-|Du(x)|^{\frac{p-2}2}z\cdot Du(x)\,\ind{|z|<1}\frac{c_\alp \dz}{|z|^{N+\alp}}\\
&\qquad\qquad+(p-2)\int_{\R^1} u\Big(x+|Du(x)|^{\frac{p-4}2}Du(x)z\Big)-u(x)
-|Du(x)|^{\frac{p-4}2}Du(x)z\cdot
Du(x)\,\ind{|z|<1}\frac{c_\alp \dz}{|z|^{1+\alp}},\\ 
&\mathcal{L}^{\alp/2}_H[u](x)=\int_{\R^N} u\Big(x+\tilde\sigma(Du(x))z\Big)-u(x)
-\tilde\sigma(Du(x)) z\cdot Du(x)\ind{|z|<1}\frac{c_\alp \dz}{|z|^{N+\alp}}
\end{align*}
where $p\geq2$ and $c_\alp=O(2-\alp)$ is the constant of $\Delta^{\alp/2}$.
The fractional Laplacian is the generator of the symmetric
$\alpha$-stable process, and the above nonlocal versions can
be seen ``generators'' of gradient dependent modulations of this process. To be
more precise, $\mathcal{L}^{\alp/2}_{\Delta_\infty}$ is a nonlocal version of
the infinity Laplace operator; both $\mathcal{L}^{\alp/2}_{\Delta_p}$ and
$\tilde{\mathcal{L}}^{\alp/2}_{\Delta_p}$ are nonlocal versions of the
$p$-Laplace operator, depending on how we write it; finally
$\mathcal{L}^{\alp/2}_{H}$ is a nonlocal version of the curvature operator $H$.
Note that these operators are of the form
$L=L_1+L_2$ where both $L_1\neq0$ ($|z|<1$) and $L_2\neq 0$
($|z|>1$).

\subsubsection*{Second type of examples:} Quasilinear versions of the
generators of some L\'{e}vy-Ito jump-processes defined by
stochastic differential equations (SDEs) driven by pure jump L\'{e}vy
processes \cite{A:Book,CT:Book}.
%
An example is the operator from the CGMY model for the price of a
European option in Finance \cite{CT:Book},
\begin{align*}
&\mathcal L u(x)=\int_{\R^1} u(x+z)-u(x)
-Du(x)(e^z-1)\,\frac{C e^{-Mz^+-Gz^-} \dz}{|z|^{1+Y}},
\end{align*}
for $C,G,M>0$, $Y\in(0,2)$, and the following new nonlocal infinity
Laplacian (compare to $\mathcal{L}^{\alpha/2}_{\Delta_\infty}$):
\begin{align*}
    &\mathcal{J}^{\alp/2}_{\Delta_\infty}[u](x)=\int_{\R^1} u\big(x+Du(x)z\big)-u(x)
-Du(x)\cdot Du(x)(e^z-1)\,\frac{c_\alp e^{-Mz^+-Gz^-} \dz}{|z|^{1+\alp}}.
\end{align*}
In this case $L_1\neq 0$ and
$L_2=0$, and $L=\mathcal{J}^{\alp/2}_{\Delta_\infty}$ is a gradient dependent modulation of
$\mathcal L$. Here $\mathcal L$ is not the generator of a L\'{e}vy
process, but the exponential of a L\'{e}vy process  \cite{CT:Book}
(after a transformation). The driving (L\'{e}vy) process here is a
tempered $\alp$-stable process \cite{CT:Book}. Other quasilinear versions
($p$-Laplace etc.) can be easily be constructed as above.

\begin{remark}
Since we do not allow for $x$-dependence in $j_1$ and
$j_2$ at the level of the PDE \eqref{eq:0}, we can only consider
generators of very special SDEs. In the example above the coefficients
in the SDE will depend on $X_t$, but after a change of variables this
dependence is lost in the corresponding PDE.
\end{remark}

\subsubsection*{Third type of examples:} Versions of the above
nonlocal operators with truncated and hence non-singular measures. Simply
replace $\dmu(z)$ in the definition of $L$ by $\ind{|z|>r}\dmu(z)$, e.g.
\begin{align*}
    & \mathcal{L}^{\alp/2,r}_{\Delta_\infty}[u](x)=\int_{\R^1} \Big(u\big(x+Du(x)z\big)-u(x)
-Du(x)\cdot Du(x)(e^z-1)\Big)\,\ind{|z|>r}\frac{c_\alp g(z) \dz}{|z|^{1+\alp}},
\end{align*}
where $g(0)\neq0$ and $g$ is $C^1$ at $z=0$. Note that here
$L=\mathcal{L}^{\alp/2,r}_{\Delta_\infty}$ with $L_1=0$ and $L_2\neq0$, and \hyp{\Meps}
holds with $A_2=g(0)I$ and $a=Dg(0)$.

\subsection{Remarks.}\

\noindent $(a)$ \brak{Continuity in $\alp,p$} All the operators above will be
continuous in 
$(\alp,p)\in(0,2)\times [2,\infty)$. For example for any bounded $C^2$
function $\phi$ and sequence $(\alp',p')\to (\alp,p)\in (0,2) \times[2,\infty)$,
$$\mathcal{L}^{\alp'/2}_{\Delta_{p'}}\phi \longrightarrow
\mathcal{L}^{\alp/2}_{\Delta_p}\phi \quad \text{in}\quad
\R^N.$$

\noindent $(b)$ \brak{The limit $\alp\to 2$} If $u$ is smooth and bounded, then
by easy computations, 
\begin{gather*}
    \mathcal{L}^{\alp/2}_{\Delta_\infty}[u]\,,\ \mathcal{J}^{\alp/2}_{\Delta_\infty}[u]
    \longrightarrow \Delta_\infty u,\qquad
    \mathcal{L}^{\alp/2}_{\Delta_p}[u]\,,\ \tilde{\mathcal{L}}^{\alp/2}_{\Delta_p}[u]
    \longrightarrow \Delta_p u,\qquad \mathcal{L}^{\alp/2}_H[u]\longrightarrow
      H [u],\\[0.2cm]
    \text{and}\qquad \mathcal{L}^{\alp/2,2-\alpha}_{\Delta_\infty}[u]
    \longrightarrow g(0)\Delta_\infty u+Dg(0)Du
\end{gather*}
point-wise as $\alp\to2$. Hence {\em all} of these operators converge
to their local counterparts including the truncated ones. These latter
operators also give rise to a drift term (when $\mu$ is
non-symmetric!). Note that in these examples assumption 
\hyp{\Meps} hold  with $\alp=2-\eps$, $A=I$ or $A=g(0)I$, and $a=0$ or
$a=Dg(0)$. 

\noindent $(c)$ \brak{Growth assumptions}
Our assumptions allow for extreme growth in
the gradient and nonlocal terms. Our results cover the equation 
$$u-F\big(L[u,Du](x)\big)=f(x)$$
for any continuous nondecreasing function $F$ and any good operator $L$ as above, e.g.
$$u-\Big(e^{\mathcal{L}^{\alp/2}_{\Delta_p}[u]}-1\Big)=f(x)
\qquad \text{for any}\qquad p\geq 2. $$ 



\section{Viscosity solutions}
\label{sect:visco}

In this section, we introduce the good notion of weak solution for
equation \eqref{eq:0}. We prove that we have two equivalent definitions
and that the solution concept is stable with respect to pointwise limits
of uniformly bounded solutions.

We start by splitting $L_1$ in \eqref{I-def} into two parts:
$L_1=L_{\delta}+L^\delta$ for $\delta>0$, where 
\begin{equation}\label{def:L.delta}
  \begin{aligned}
    L_{\delta}[\phi & ,D\phi] (x) :=  \int_{|z|<\delta} 
	\phi\big(x+j_1(D\phi(x),z)\big)-\phi(x)- 
	j_1\big(D\phi(x),z\big)\cdot D\phi(x)\ \dmu_1(z)\,,\\[2mm]
	L^{\delta}[u & ,p](x):= \int_{|z|\geq\delta} 
	u\big(x+j_1(p,z)\big)-u(x)- 
        j_1\big(p,z\big)\cdot p\ \dmu_1(z)\quad (p\in\R^N)\,.
  \end{aligned}
\end{equation}
In view of \hyp{M}, $L_\delta$ is well-defined for any $C^2$
function $\phi$ and $L^\delta$ for any bounded function $u$. 
Likewise,
the operator $L_2[u,p]$ is also well-defined for any $p\in\R^N$ and
bounded measurable function $u$. 
Recall that those integrals are taken over $\R^P$.
Now we can 
introduce the 
concept of solutions that we will use in this paper.

\begin{definition}\label{def1}\ \\
    \noindent$(a)$ A bounded usc function $u$ is a {\em viscosity subsolution} of 
  \eqref{eq:0} if for any $\delta>0$, any $C^2$ function 
  $\phi$, and any global maximum point $x$ of $u-\phi$,
  \begin{equation}\label{ineq:subsol.nolocal}
   F\Big(u(x),D\phi(x),L_{\delta}[\phi,D\phi](x)+L^{\delta}[u,D\phi](x)+L_2[u,D\phi](x)\Big)\leq f(x)\,.
  \end{equation}
  \noindent$(b)$ A bounded lsc function $u$ is a {\em viscosity supersolution} of 
  \eqref{eq:0} if for any $\delta>0$, any $C^2$ function 
  $\phi$, and any global minimum point $x$ of $u-\phi$,
  \begin{equation}\label{ineq:subsol.nolocal_1}
   F\Big(u(x),D\phi(x),L_{\delta}[\phi,D\phi](x)+L^{\delta}[u,D\phi](x)+L_2[u,D\phi](x)\Big)\geq f(x)\,.
  \end{equation}
  \noindent$(c)$ A {\em viscosity solution} is a bounded continuous function $u$ which is both a subsolution and a supersolution.
\end{definition}

Another possible definition is the following:
\begin{definition}\label{def2}\ \\
    \noindent$(a)$ A bounded usc function $u$ is a {\em
    viscosity subsolution} of  
  \eqref{eq:0} if for any bounded $C^2$ function 
  $\phi$, and any global maximum point $x$ of $u-\phi$,
  \begin{equation}\label{ineq:subsol.nolocal_2}
   F\Big(u(x),D\phi(x),L[\phi,D\phi](x)\Big)\leq f(x)\,.
  \end{equation}
  \noindent$(b)$ A bounded lsc function $u$ is a {\em
    viscosity subsolution} of  
  \eqref{eq:0} if for any bounded $C^2$ function 
  $\phi$, and any global minimum point $x$ of $u-\phi$,
  \begin{equation}\label{ineq:subsol.nolocal_3}
   F\Big(u(x),D\phi(x),L[\phi,D\phi](x)\Big)\geq f(x)\,.
  \end{equation}
  \noindent$(c)$ A {\em viscosity solution} is a bounded continuous function $u$ which is both a subsolution and a supersolution.
\end{definition}

\begin{remark} 
\label{rem:str_max}
We may assume without loss of generality that the extrema of $u-\phi$ are strict
and that $\phi=u$ at the extremal point. The latter comes from shifting the test
function by a constant. To make an extremum (say a maximum) point $x$ strict, we
replace $\phi$ by $\phi+\delta\psi$ where $\delta>0$ and
$$\psi\in C^2(\R^N)\cap W^{2,\infty}(\R^N),\qquad\psi=0 \text{ and }
D\psi=0 \text{ at } x, \qquad \text{and}\qquad \psi>0 \text{
  elsewhere,}$$
and send $\delta\to0$ in the final step of the proof.
 As opposed to the local case, the $\delta$-terms
will now be visible throughout the computations and vanish only
in the final step.
\end{remark}

\begin{lemma}
If \hyp{M}, \hyp{J1}, \hyp{F1}, and \hyp{F4} hold, then Definitions \ref{def1}
and \ref{def2} are equivalent. 
\end{lemma}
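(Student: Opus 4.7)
The plan is to prove the equivalence for subsolutions (the supersolution case is symmetric). In both directions I would start by using Remark~\ref{rem:str_max} to reduce to test functions $\phi$ satisfying $\phi(x)=u(x)$ at the maximum point; combined with the global maximum condition this gives $u\leq\phi$ on $\R^N$. Since $(\phi-u)(x)=0$, the monotonicity identities
\begin{align*}
L^\delta[\phi,D\phi](x)-L^\delta[u,D\phi](x)&=\int_{|z|\geq\delta}(\phi-u)\bigl(x+j_1(D\phi(x),z)\bigr)\,\mathrm{d}\mu_1\geq 0,\\
L_2[\phi,D\phi](x)-L_2[u,D\phi](x)&=\int_{\R^P}(\phi-u)\bigl(x+j_2(D\phi(x),z)\bigr)\,\mathrm{d}\mu_2\geq 0
\end{align*}
are immediate, so the argument of $F$ in Definition~\ref{def1} is bounded above by $L[\phi,D\phi](x)$. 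Combined with the nonincreasing $\ell$-dependence of $F$ in \hyp{F1} this yields the elementary chain
\[
F\bigl(u(x),D\phi(x),L[\phi,D\phi](x)\bigr)\leq F\bigl(u(x),D\phi(x),L_\delta[\phi]+L^\delta[u,D\phi]+L_2[u,D\phi]\bigr).
\]
The implication Definition~\ref{def1} $\Rightarrow$ Definition~\ref{def2} is then immediate: take $\phi$ bounded $C^2$ and chain with the Definition~\ref{def1} inequality.

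For Definition~\ref{def2} $\Rightarrow$ Definition~\ref{def1}, the difficulty is that Definition~\ref{def2} only supplies information for \emph{bounded} test functions. Given $\phi\in C^2$ (taken with bounded Hessian so all integrals are defined), I would pick $\chi\in C_c^\infty(\R^N)$ with $\chi\equiv 1$ on $B(0,1)$ and $\chi\equiv 0$ off $B(0,2)$, set $\chi_R(y):=\chi((y-x)/R)$ and $M:=\|u\|_\infty+1$, and define the truncation
\[
\phi_R(y):=\chi_R(y)\phi(y)+\bigl(1-\chi_R(y)\bigr)M.
\]
Then $\phi_R$ is bounded $C^2$, agrees with $\phi$ on $B(x,R)$, and satisfies $\phi_R\geq u$ everywhere with $\phi_R(x)=\phi(x)=u(x)$, so $x$ remains a global maximum of $u-\phi_R$. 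A scaling check gives $\|D^2\phi_R\|_\infty\leq C$ uniformly in $R$, because the $R^{-2}$ scaling of $D^2\chi_R$ is compensated by the at most quadratic growth of $\phi$ on $\{R\leq|y-x|\leq 2R\}$ permitted by the bounded Hessian. Applying Definition~\ref{def2} to $\phi_R$ and using the first-paragraph monotonicity (with $\phi_R$ in place of $\phi$ in the far terms) gives, for every large enough $R$,
\[
F\bigl(u(x),D\phi(x),L_\delta[\phi_R,D\phi](x)+L^\delta[u,D\phi](x)+L_2[u,D\phi](x)\bigr)\leq f(x).
\]

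The essential final step is the limit $R\to\infty$ inside $L_\delta[\phi_R,D\phi]$ at fixed $\delta$, and this is the main obstacle: even on $\{|z|<\delta\}$ the jump $j_1(D\phi(x),z)$ need not be bounded in $z$, so modifying $\phi$ only outside a ball does not make the integrands agree pointwise. However, $\phi_R=\phi$ on $B(x,R)$ implies the integrands coincide on $\{|j_1|\leq R\}$, and a Taylor bound uniform in $R$ controls their difference on $\{|j_1|>R\}$ by $C(1+|j_1|+|j_1|^2)$. The $L^2(\mathrm{d}\mu_1)$-integrability in \hyp{J1} together with Markov's inequality force $\mu_1(\{|j_1|>R\})\to 0$ and $\int_{|j_1|>R}(|j_1|+|j_1|^2)\,\mathrm{d}\mu_1\to 0$, hence $L_\delta[\phi_R,D\phi](x)\to L_\delta[\phi,D\phi](x)$ by dominated convergence. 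Continuity of $F$ from \hyp{F1} then delivers Definition~\ref{def1}.
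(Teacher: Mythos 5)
Your forward direction (Definition~\ref{def1} $\Rightarrow$ Definition~\ref{def2}) is correct and matches the paper. The reverse direction, however, has a real gap: you apply the monotonicity in the wrong direction, and the construction you chose cannot produce the required inequality.

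Applying Definition~\ref{def2} to $\phi_R$ gives $F\big(u(x),D\phi(x),L[\phi_R,D\phi](x)\big)\le f(x)$. Your ``first-paragraph monotonicity with $\phi_R$ in place of $\phi$'' reads $L_\delta[\phi_R]+L^\delta[u]+L_2[u]\le L[\phi_R]$, and since $F$ is \emph{nonincreasing} in $\ell$ by \hyp{F1}, this yields $F\big(u(x),D\phi(x),L[\phi_R]\big)\le F\big(u(x),D\phi(x),L_\delta[\phi_R]+L^\delta[u]+L_2[u]\big)$. You therefore have two inequalities with $F(\cdot,L[\phi_R])$ on the \emph{smaller} side of each; they cannot be chained to give $F\big(\cdot,L_\delta[\phi_R]+L^\delta[u]+L_2[u]\big)\le f(x)$. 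To preserve the $\le f$ inequality you would need the Definition~\ref{def1} argument of $F$ to be \emph{larger} than $L[\phi_R]$, i.e. you would need $L^\delta[u]+L_2[u]\ge L^\delta[\phi_R]+L_2[\phi_R]$, and this fails precisely because $\phi_R\ge u$ with contact at $x$.

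The structural reason is that your truncation goes the wrong way. Your $\phi_R$ is an admissible \emph{bounded} test function (so Definition~\ref{def2} applies) and dominates $u$, but it is not $\le\phi$, and $\phi_R\to\phi$ (not $u$) pointwise; so neither step of the intended argument is available. The paper instead sandwiches a bounded smooth $\phi_\eps$ with $u\le\phi_\eps\le\phi$, $\phi_\eps(x)=u(x)$, and $\phi_\eps\to u$ a.e. The key is that $\phi_\eps\le\phi$ touching at $x$ gives $L_\delta[\phi_\eps]\le L_\delta[\phi]$ (so replacing $\phi_\eps$ by $\phi$ in the near term \emph{increases} the $\ell$-argument and \emph{decreases} $F$, preserving $\le f$), while $\phi_\eps\to u$ lets you pass $L^\delta[\phi_\eps]\to L^\delta[u]$ and $L_2[\phi_\eps]\to L_2[u]$ by dominated convergence and continuity of $F$. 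Your dominated-convergence estimate on $L_\delta[\phi_R]$ (uniform Hessian bound plus Markov on $\{|j_1|>R\}$) is sound, but it is aimed at the wrong limit object; the construction needs to be replaced by a sandwiching from above of $u$ and below of $\phi$, not a far-field truncation of $\phi$.
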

\begin{proof}
The proof is pretty standard \cite{sa,BBP,JK,BI}.
Since $(u-\phi)$ has a max in $x$, $L^\delta[u,D\phi](x)\leq
L^\delta[\phi,D\phi](x)$ and $L_2[u,D\phi](x)\leq
L_2[\phi,D\phi](x)$, and hence by \hyp{F1}, \hyp{F4}, and since
$L=L_\delta+L^\delta+L_2$, inequality 
\eqref{ineq:subsol.nolocal_2} follows from
\eqref{ineq:subsol.nolocal}. 
Conversely, we may assume the max is strict (see Remark
\ref{rem:str_max}). Then there exists 
a smooth and uniformly bounded function $\phi_\eps$ such that
$u\leq\phi_\eps\leq\phi$ and $\phi_\eps\to u$ a.e. as $\eps\to0$. It
immediately follows that also $u-\phi_\eps$ and $\phi_\eps-\phi$ 
have 
maximum points at $x$. Hence, since $D\phi_\eps(x)=D\phi(x)$ and by the
definition of $L$ (monotonicity and  
$L=L_\delta+L^\delta+L_2$), 
\begin{align*}
 L[\phi_\eps,D\phi_\eps](x)\leq L_\delta[\phi,D\phi](x)+L^\delta[\phi_\eps,D\phi](x)+L_2[\phi_\eps,D\phi](x).
\end{align*}
Hence, by inequality \eqref{ineq:subsol.nolocal_2} with $\phi_\eps$
replacing $\phi$, inequality \eqref{ineq:subsol.nolocal} with $\phi_\eps$
replacing $u$ follows. Now we conclude by sending $\eps\to0$, using
\hyp{M}, \hyp{J1}, \hyp{F1}, \hyp{F4}, and the dominated convergence theorem.
\end{proof}

Next, we show that this solution concept is stable with respect
to local uniform limits, to so-called half-relaxed limits, and more
generally to very general perturbations of the equation. Consider
\begin{equation}\label{eq:eps2}
    F_\eps\big(u_\eps,Du_\eps,L_\eps[u_\eps,Du_\eps]\big)=f_\eps(x)\qquad\text{in}\qquad \R^N,
\end{equation}
where  $(f_\eps,F_\eps,L_\eps:=L_{1,\eps}+L_{2,\eps})$ satisfy
\hyp{F4}, \hyp{F1}, \hyp{M}, and \hyp{J1} for each
fixed $\eps>0$, and where
\begin{align*}
L_{1,\eps}[u,Du](x)&=\int_{\R^P} u\big(x+j_{1,\eps}(Du,z)\big)-u(x)
-j_{1,\eps}(Du,z)\cdot Du(x)\, \dmu_{1,\eps}(z)\,,\\
L_{2,\eps}[u,Du](x)&=\int_{\R^P} u\big(x+j_{2,\eps}(Du,z)\big)-u(x)
\dmu_{2,\eps}(z)\,.
\end{align*}
Then we define the ``half-relaxed limits'':
\begin{align}
& \ol u(x):=\limsup_{y\to x,\eps\to 0} u_\eps(y)\quad 
\text{and}\quad \displaystyle \ul u(x):=\liminf_{y\to x,\eps\to 0} u_\eps(y).\\
& \ol f(x):=\limsup_{y\to x,\eps\to 0} f_\eps(y)\quad
\text{and}\quad  \ul f(x):=\liminf_{y\to x,\eps\to 0} f_\eps(y).\\
& \ol F(u,p,l):=\!\!\!\limsup_{\scriptsize\begin{array}{c}{(v,q,m)\to (u,p,l)}\\{\eps\to 0}\end{array}}\hspace{-0.5cm}F_\eps(v,q,m)\quad \text{and}\quad \ul F(u,p,l):=\!\!\liminf_{\scriptsize\begin{array}{c}{(v,q,m)\to (u,p,l)}\\{\eps\to 0}\end{array}}\hspace{-0.5cm}F_\eps(v,q,m).
\end{align}

\begin{lemma}[Stability 1] \label{lem:stab} Assume $\{f_\eps,F_\eps,L_\eps\}_\eps$ satisfy \hyp{M},
  \hyp{J1}, \hyp{F1}, \hyp{F4} for any $\eps>0$, 
$$\liminf_{\eps\to0}L_\eps[\phi,D\phi](x_\eps)\leq L[\phi,D\phi](x)
\qquad (\text{resp.}\  \limsup_{\eps\to0}L_\eps[\phi,D\phi](x_\eps)\geq L[\phi,D\phi](x)),$$
for all bounded $\phi\in C^2$ and all sequences $x_\eps\to x$, and that
$\{u_\eps\}_{\eps>0}$ is a sequence of uniformly 
bounded subsolutions (resp. supersolutions)  of \eqref{eq:eps2}. 

Then $\displaystyle\ol u(x)$ is a subsolution (resp. $\displaystyle\ul
u(x)$ supersolution)
of \eqref{eq:0} with $(f,F)$ replaced by $({\overline f},{\underline
  F})$ (resp. $({\underline f},{\overline
  F}))$. 
\end{lemma}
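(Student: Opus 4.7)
The plan is to adapt the classical half-relaxed limits method to the nonlocal setting, using the one-sided hypothesis on $L_\eps$ in place of continuity of the operator. I will treat the subsolution case; the supersolution case is symmetric. Working with Definition~\ref{def2} (equivalent to Definition~\ref{def1} by the preceding lemma), I would fix $\phi\in C^2\cap C_b(\R^N)$ and a global maximum point $x$ of $\ol u-\phi$.

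First I would make the maximum strict and compactly attained. Following Remark~\ref{rem:str_max}, replace $\phi$ by $\phi+\theta\chi$, where $\chi\in C^2\cap W^{2,\infty}(\R^N)$ is nonnegative, vanishes with its gradient at $x$, is strictly positive elsewhere, and satisfies $\liminf_{|y|\to\infty}\chi(y)>0$ (e.g.\ a smoothed $|y-x|^2/(1+|y-x|^2)$). For $\theta>0$ sufficiently small, $x$ is the unique strict global maximum of $\ol u-\phi-\theta\chi$, with a positive gap from values at infinity. Combining this gap with uniform boundedness of the $u_\eps$ forces, for $\eps$ small, every global maximum of the usc function $u_\eps-\phi-\theta\chi$ to lie in a fixed ball $B_R$; by compactness such a maximum point $x_\eps\in B_R$ exists. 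The standard half-relaxed limit argument, using that $x$ is the unique strict global maximum of $\ol u-\phi-\theta\chi$, then yields (along a subsequence) $x_\eps\to x$ and $u_\eps(x_\eps)\to\ol u(x)$.

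Next I would pass to the limit in the equation. Definition~\ref{def2} applied to $u_\eps$ at $x_\eps$ with test function $\phi+\theta\chi$ gives
\begin{equation*}
F_\eps\bigl(u_\eps(x_\eps),D(\phi+\theta\chi)(x_\eps),L_\eps[\phi+\theta\chi,D(\phi+\theta\chi)](x_\eps)\bigr)\leq f_\eps(x_\eps).
\end{equation*}
By the assumed $\liminf$-convergence of $L_\eps$, along a further subsequence $L_\eps[\phi+\theta\chi,D(\phi+\theta\chi)](x_\eps)\to L_0\leq L[\phi+\theta\chi,D(\phi+\theta\chi)](x)$. Taking the $\liminf$ of both sides and using the definitions of $\ul F$ and $\ol f$ as relaxed limits, together with monotonicity of $F_\eps$ in the last variable (which is inherited by $\ul F$), I arrive at
\begin{equation*}
\ul F\bigl(\ol u(x),D(\phi+\theta\chi)(x),L[\phi+\theta\chi,D(\phi+\theta\chi)](x)\bigr)\leq\ol f(x).
\end{equation*}

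Finally I would let $\theta\to0$. Since $D\chi(x)=0$, the gradient is unchanged, and by \hyp{M}, \hyp{J1}, continuity of the $j_i$ in $p$, and dominated convergence, $L[\phi+\theta\chi,D(\phi+\theta\chi)](x)\to L[\phi,D\phi](x)$. The lower semicontinuity of $\ul F$, automatic from its definition as a relaxed $\liminf$, then produces the desired inequality
\begin{equation*}
\ul F\bigl(\ol u(x),D\phi(x),L[\phi,D\phi](x)\bigr)\leq\ol f(x).
\end{equation*}
The hardest step will be the first one: producing a \emph{compact} sequence of maximum points $x_\eps$ in the unbounded domain $\R^N$ while restricted to bounded test functions; this is resolved by the coercivity-at-infinity penalization~$\chi$. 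A secondary subtlety, handled by the monotonicity of $F_\eps$ in the nonlocal variable, is that the hypothesis on $L_\eps$ provides only one-sided convergence.
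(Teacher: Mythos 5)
Your argument is correct and follows the same route as the paper: pass to Definition~\ref{def2}, penalize the test function so the maximum of $\ol u-\phi$ is strict and cannot escape to infinity, extract maximum points $x_\eps\to x$ with $u_\eps(x_\eps)\to\ol u(x)$, pass to the limit in the viscosity inequality using the one-sided hypothesis on $L_\eps$ together with the monotonicity (F1), and finally remove the penalization. The paper simply cites the classical extraction result \cite[Lemma~V.1.6]{CB:Book} and treats the removal of the penalization as implicit via Remark~\ref{rem:str_max}, whereas you spell these two steps out explicitly; the substance is the same.
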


We also have the following stability result.

\begin{lemma}[Stability 2]\label{lem:stab2}
Assume \hyp{M}, \hyp{J1}, \hyp{F1}, \hyp{F4} hold, and $\{u_a\}_{a\in
  A}$, for a set $A$, is a family of uniformly 
bounded subsolutions (resp. supersolutions) of \eqref{eq:0}.  

\noindent $(a)$ If for any $n$, $u_{a_n}$ is continuous and $u_{a_n}\to u$ locally
uniformly as $n\to\infty$, then $u$ is a continuous bounded
subsolution (resp. supersolution) of \eqref{eq:0}.

\noindent $(b)$ $u:=\sup_{a\in A} u_a$ is a subsolution (resp. $v=\inf_{a\in
  A}u_a$ is a supersolution) of \eqref{eq:0}. 
\end{lemma}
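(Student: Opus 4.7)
The plan is to derive (a) directly from Lemma~\ref{lem:stab} and to establish (b) by the classical envelope argument for suprema of subsolutions, with reverse Fatou handling the nonlocal terms.

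For (a), apply Lemma~\ref{lem:stab} with the $\eps$-independent data $F_\eps\equiv F$, $f_\eps\equiv f$, $L_\eps\equiv L$ and approximating sequence $u_\eps:=u_{a_n}$. Local uniform convergence collapses the half-relaxed limits to $\ol u=\ul u=u$, $\ol F=\ul F=F$, $\ol f=\ul f=f$; the hypothesis on $L_\eps$ in Lemma~\ref{lem:stab} reduces to continuity of $x\mapsto L[\phi,D\phi](x)$ for bounded $\phi\in C^2$, which follows from \hyp{M}, \hyp{J1}, and dominated convergence (uniform Taylor bound $C\|D^2\phi\|_\infty(|z|^2\wedge 1)$ for $L_1$ and trivial bound $2\|\phi\|_\infty$ for $L_2$). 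Continuity of the limit $u$ is automatic.

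For (b), treat the subsolution case; the supersolution case is symmetric. The statement is understood in the usual viscosity sense, namely that the upper semicontinuous envelope $u^*$ of $u:=\sup_{a\in A}u_a$ is a subsolution (this coincides with $u$ whenever $u$ is already usc). Fix a $C^2$ test function $\phi$ and a global maximum $x_0$ of $u^*-\phi$; by Remark~\ref{rem:str_max} the maximum may be assumed strict with $\phi(x_0)=u^*(x_0)$. Replace $\phi$ by $\tilde\phi(x):=\phi(x)+\lambda|x-x_0|^2$ for a small $\lambda>0$ to enforce coercivity while preserving the values and derivatives at $x_0$. Pick $y_n\to x_0$ with $u(y_n)\to u^*(x_0)$ and $a_n\in A$ with $u_{a_n}(y_n)\geq u(y_n)-1/n$. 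Since $u_{a_n}$ is bounded usc and $\tilde\phi$ is coercive, $u_{a_n}-\tilde\phi$ attains its global maximum at some $x_n$; comparison with the value at $y_n$ plus uniform boundedness makes $\{x_n\}$ bounded, and the strict maximality of $u^*-\tilde\phi$ at $x_0$ forces $x_n\to x_0$ and $u_{a_n}(x_n)\to u^*(x_0)$.

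Now write the subsolution inequality for $u_{a_n}$ at $x_n$ via Definition~\ref{def1} with the splitting $L=L_\delta+L^\delta+L_2$ and pass $n\to\infty$. The smooth term $L_\delta[\tilde\phi,D\tilde\phi](x_n)$ converges to $L_\delta[\tilde\phi,D\tilde\phi](x_0)$ by dominated convergence. The main technical step is the limit of the $L^\delta$ and $L_2$ integrals, where $u_{a_n}\to u^*$ holds only in the upper semicontinuous envelope sense. The integrands are nonetheless uniformly bounded on the finite measure sets $\{|z|\geq\delta\}$ (via $\int_{|z|\geq\delta}\dmu_1\leq\delta^{-2}\int|z|^2\dmu_1<\infty$ from \hyp{M}) and $\R^P$, and the continuity of $j_i$ in $p$ from \hyp{J1} together with the definition of $u^*$ give, for a.e.\ $z$,
\[
\limsup_{n}u_{a_n}\bigl(x_n+j_i(D\tilde\phi(x_n),z)\bigr)\leq u^*\bigl(x_0+j_i(D\tilde\phi(x_0),z)\bigr),
\]
so reverse Fatou yields $\limsup_n L^\delta[u_{a_n},D\tilde\phi(x_n)](x_n)\leq L^\delta[u^*,D\tilde\phi(x_0)](x_0)$ and likewise for $L_2$. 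Continuity of $F$ and $f$, together with the non-increasing dependence of $F$ on its third argument (\hyp{F1}, \hyp{F4}), transfer the inequality to the limit, so $u^*$ satisfies the subsolution inequality at $x_0$ with test function $\tilde\phi$. Finally send $\lambda\to 0$ (dominated convergence in $L_\delta$, while the $L^\delta$, $L_2$ terms depend on $\lambda$ only through $D\tilde\phi(x_0)=D\phi(x_0)$) to restore $\phi$, and (b) follows.
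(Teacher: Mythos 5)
Part (a) is essentially the paper's own argument: invoke Lemma~\ref{lem:stab} with constant data and note that local uniform convergence of continuous functions collapses the half-relaxed limits to $u$.

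Part (b) is correct, and your handling of the supremum via the usc envelope $u^*$ and an explicitly coercive test function is a welcome precision where the paper is terse. But you take a noticeably longer route than the paper, and the difference is instructive. You work with Definition~\ref{def1}, splitting $L=L_\delta+L^\delta+L_2$; since $L^\delta$ and $L_2$ involve $u_{a_n}$ evaluated at shifted points $x_n+j_i(D\tilde\phi(x_n),z)$, you must argue via reverse Fatou plus the inequality $u_{a_n}\leq u^*$ and upper semicontinuity of $u^*$ to conclude $\limsup_n L^\delta[u_{a_n},\cdot](x_n)\leq L^\delta[u^*,\cdot](x_0)$, and likewise for $L_2$. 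The paper instead works with the equivalent Definition~\ref{def2}, in which the nonlocal operator acts on the bounded test function $\phi$ alone, so $l_k=L[\phi,D\phi](y_k)$ depends only on $\phi$ and $y_k$; then one needs only continuity of $x\mapsto L[\phi,D\phi](x)$ (dominated convergence) and monotonicity of $F$ to pass to the limit, with no Fatou argument and no need to track $u_{a_n}$ at shifted points. The price of Definition~\ref{def2} is a bounded test function, which is why the paper modifies $\phi$ by the bounded $\psi$ of Remark~\ref{rem:str_max} rather than by an unbounded quadratic; your choice of $\tilde\phi=\phi+\lambda|x-x_0|^2$ is unbounded and therefore commits you to Definition~\ref{def1} and the more involved limit passage. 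Both are valid, but it is worth knowing that switching to the bounded-test-function formulation makes this particular proof almost trivial. One small caveat, shared with the paper's own proof: the claimed convergence $L_\delta[\tilde\phi,D\tilde\phi](x_n)\to L_\delta[\tilde\phi,D\tilde\phi](x_0)$ by dominated convergence is delicate under \hyp{M}, \hyp{J1} alone, since the Taylor bound $\frac12\|D^2\tilde\phi\|_\infty|j_1(D\tilde\phi(x_n),z)|^2$ is not a fixed dominating function; it becomes immediate under \hyp{J2} (which gives $L^2(\dmu_1)$-continuity of $p\mapsto j_1(p,\cdot)$) or \hyp{J3} (equi-integrability, via Vitali). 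Since the paper makes the same leap, this is not a flaw specific to your argument, but you should be aware of it.
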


The proofs follow after the next remark.

\begin{remark}
    $(i)$ Similar type of results can be found in \cite{BI}, but
  without variation in $L$.

\noindent $(ii)$ Compare Lemma \ref{lem:stab2}-$(a)$ to the no stability
w.r.t. local uniform convergence result of \cite{BCF2012_2}. In
\cite{BCF2012} there is stability, but the nonlocal 
  operators are more different from ours than in \cite{BCF2012_2}.  
\end{remark}

\begin{proof}[Proof of Lemma \ref{lem:stab}]
The proof is quite standard, see e.g. \cite{BI} (Theorem 2) for a 
similar proof. We only do the subsolution case since the supersolution case
is similar. Assume $\phi$ is  $C^2$ and bounded and $\bar
u-\phi$ has a global maximum at $x$, we will show that inequality
\eqref{ineq:subsol.nolocal_2} holds and we are done.

Modifying the test function if necessary (as in Remark \ref{rem:str_max},
assuming also $\psi(y)=1$ for $|y|>1$), we may assume the
maximum is unique, strict, and can not be attained at infinity. In
fact, we may assume that
\begin{align}
\label{qq} (\bar u-\phi)(x)>\sup_{|y-x|>r}(\bar
u-\phi)(y)\qquad\text{for any}\quad r>0.
\end{align}
Then we take a subsequence such that $\bar u (x) = 
\lim_{\eps} u_{\eps} (x_{\eps})$, and note that by \eqref{qq} and
classical arguments \cite[Lemma V.1.6]{CB:Book}, we may
find a sequence $\{y_\eps\}_\eps$ such that
$$u_{\eps}-\phi \quad\text{has a global maximum at
  $y_{\eps}$},\qquad  y_{\eps}\to x, \qquad\text{and}\qquad
u_{\eps}(y_{\eps})\to \overline u(x).$$
Since $u_{\eps}$ is a subsolution of \eqref{eq:eps2},
$$
F_{\eps} (u_{\eps} (y_{\eps}),
D\phi(y_{\eps}),l_{\eps})\le f_\eps(y_{\eps})\qquad\text{where}\qquad
l_{\eps}=L_{\eps}[\phi,D\phi](y_{\eps}). 
$$
By the construction of $y_\eps$ and the assumption of the Lemma,
\begin{align}
\label{qqq}
\liminf_{\eps}\, l_{\eps} \leq L[\phi,D\phi](x).
\end{align}
Hence if we take a further subsequence in $\eps$ such that
$l_\eps\to\liminf_{\eps}\, l_{\eps}$, then by the definition of
$(\overline f,\underline F)$ and continuity, \hyp{F1}
and \hyp{F4},
$$
\underline F (\bar u(x),D\phi(x),\liminf_{\eps}\, l_{\eps})\leq \overline f(x).
$$ 
and inequality \eqref{ineq:subsol.nolocal_2} then follows from
\eqref{qqq} and monotonicity \hyp{F1}.
\end{proof}

\begin{proof}[Proof of Lemma \ref{lem:stab2}]
$(a)$ Let $\phi$ be $C^2$ and bounded and $x_\eps\to x$. By assumptions \hyp{M}
and \hyp{J1}, and the dominated convergence theorem, 
$$
\lim_{x_\eps\to x}L[\phi,D\phi](x_\eps)=L[\phi,D\phi](x).
$$
Hence by Lemma \ref{lem:stab}, $\overline
u(x)=\displaystyle\limsup_{y\to x,n\to\infty} u_{a_n}(y)$ is a (bounded) subsolution
of \eqref{eq:0}. Since $u_{a_n}$ is continuous and $u_{a_n}\to u$
locally uniformly, it follows that $\overline u=u$ and $u$ is continuous.

\noindent $(b)$ The proof is similar to the proof of Lemma \ref{lem:stab} and we
only do the subsolution case. Assume $u-\phi$ has a strict global max
at $x$. By the definition of the supremum, there is a sequence
$u_{a_k}(x_k)\to u(x)$ as $k\to\infty$. As in the previous proof we
may find a sequence $\{y_k\}_k$ such that such that
$$u_{a_k}-\phi \quad\text{has a global maximum at
  $y_{k}$},\qquad  y_{k}\to x, \qquad\text{and}\qquad u_{a_k}(y_{k})\to u(x).$$
Since $u_{a_k}$ is a subsolution of \eqref{eq:0},
$$
F(u_{a_k} (y_{k}),
D\phi(y_{k}),l_{k})\le f(y_{k})\qquad\text{where}\qquad
l_{k}=L[\phi,D\phi](y_{k}). 
$$
By the construction of $y_k$, assumptions \hyp{M} and \hyp{J1}, and
the dominated convergence theorem, 
$$
\lim_{k}\, l_{k}=L[\phi,D\phi](x),
$$
and then by the continuity, \hyp{F1} and \hyp{F4}, inequality
\eqref{ineq:subsol.nolocal_2} holds. 
\end{proof}

\section{Proofs of the main results}
\label{sect:proofs}

\subsection{Proof of Theorem \ref{thm:comp} (comparison)}

\begin{proof}[Proof of Theorem \ref{thm:comp}-$(a)$]
We proceed by contradiction, assuming that $M:=\sup\big(u-v)>0$. 

Let $\eps,R>0$ and define
  \begin{equation}\label{Phi}
    \Phi_{\eps,R}(x,y):=u(x)-v(y)-\phi(x,y),
  \end{equation}
  where
  \begin{equation}\label{test_func}
                         \phi(x,y)=\frac1{\eps^2}\varphi(x-y)
			+\psi\left(\frac x R\right)+\psi\left(\frac y
                          R\right)\,, 
		\end{equation}
		and $\varphi,\psi$ are smooth bounded radially
                symmetric and radially non-decreasing functions such that 
$$\varphi(x)=\begin{cases}|x|^2 & \text{for } |x|<1\\ 2& \text{for } |x|>4  \end{cases}\qquad\text{and}\qquad
\psi(x)=\begin{cases}0 & \text{for }|x|<\frac12\\ 2(\|u\|_\infty+\|v\|_\infty) +
                1 &\text{for }|x|>1
\end{cases}$$

By penalization (the $\psi$-terms) the supremum of $\Phi_{\eps,R}$ is attained
at a point $(\bx,\by)$, and since $M>0$ this supremum is
positive when $R$ is big enough (see {\bf 1)} below):
$$M_{\eps,R}:=\max\Phi_{\eps,R}=\Phi(\bx,\by)>0.$$
                For the sake of simplicity we drop the
                reference to $\eps,R$ for the maximum
                point. 
By the inequality
                $\Phi(\bx,\bx)+\Phi(\by,\by)\leq 2\Phi(\bx,\by)$, it
                follows that $\frac2{\eps^2}\varphi(\bx-\by)\leq u(\bx)-u(\by)+v(\bx)-v(\by),$
and hence
\begin{equation}
\label{xy-estim}
\varphi(\bx-\by)\leq
(\|u\|_\infty+\|v\|_\infty)\eps^2.
\end{equation}
By taking $\eps>0$ small enough, we can 
always assume that  
$$\varphi(\bx-\by)=|\bx-\by|^2\qquad\text{and}\qquad (D\varphi)(\bx-\by)=2(\bx-\by).$$
In particular, $|\bx-\by|\leq(\|u\|_\infty+\|v\|_\infty)\eps$ and this estimate is
independent of $R$. 

From the maximum of $\Phi_{\eps,R}$ it follows that
		$u(x)-\phi(x,\by)$ has a global maximum point at $\bx$
                and $v(y)-(-\phi)(\bx,y)$ has a global minimum point at
                $\by$. Subtracting the corresponding viscosity
                inequalities  for $u$ and  $v$ (cf. Definition
                \ref{def1}) gives for any $\delta>0$ that
		\begin{align}
		    0\geq & -\bigg(L_{\delta}[\phi(\cdot,\by),D_x\phi](\bx)-
			L_{\delta}[(-\phi)(\bx,\cdot),D_y(-\phi)](\bar
                        y)\bigg)\nonumber\\
&			 -\bigg(L^{\delta}[u,D_x\phi](\bx)-
			L^{\delta}[v,D_y(-\phi)](\bar
                        y)\bigg)\nonumber \\
&-\bigg(L_2[u,D_x\phi](\bx)-
			L_2[v,D_y(-\phi)](\bar
                        y)\bigg)\nonumber\\
&-(f(\bx)-f(\by)) + \big(u(\bx)-v(\by)\big)\nonumber\\
\geq& 
			-I_{\delta}-I^{\delta} -I_2- \omega_f(\bx-\by)+
			\big(u(\bx)-v(\by)\big)\,. \label{vineq}
			\end{align}

		The strategy is now to estimate $I_{\delta}$,
                $I^\delta$, and $I_2$, and prove that when
                sending first $\delta\to0$, then $R\to\infty$, and
                finally $\eps\to 0$,
\begin{align*}
&\limsup_{\eps\to0}\limsup_{R\to\infty}\limsup_{\delta\to0}\big(I_{\delta}
+I^\delta+I_2\big)\leq0.
\end{align*}
We will also show that
\begin{align}
&\limsup_{\eps\to0}\limsup_{R\to\infty}\limsup_{\delta\to0}
\big(u(\bx)-v(\by)\big)\geq M,\label{Mlim}
\end{align}
and hence by the viscosity inequality \eqref{vineq} we get the contradiction that concludes the proof:
$$0\geq M. $$

We proceed in 4 steps:
\smallskip
				
\noindent\textbf{\bf 1)}\quad We show that \eqref{Mlim} holds. First
note that 
$u(\bx)-v(\by)=M_{\eps,R}+\phi(\bx,\by)$ does not depend on
$\delta$. Then by the maximum point property, it follows that
$$M_{\eps,R}\ra
M_\eps:=\sup
\Big(u(x)-v(y)-\frac1{\eps^2}\varphi(x-y)\Big)\qquad\text{and}\qquad
\psi(\tfrac {\bx}R)+\psi(\tfrac {\by}R)\to 0$$
 as $R\to\infty$ (see Lemma 2.3 in \cite{JK02}). Observe now that $M\leq
 M_\eps\leq M_{\eps'}$ for $\eps\leq\eps'$, and hence by monotone convergence,
$M_\eps\searrow \tilde M$ for some $\tilde M\geq M$. 
Since $\displaystyle\tilde M
=\limsup_{\eps\to0}\limsup_{R\to\infty}\limsup_{\delta\to0} 
\big(u(\bx)-v(\by)\big)$, we are done.
                \smallskip
				
		\noindent\textbf{\bf 2)}\quad To estimate the
                $I_{\delta}$-term, we  Taylor expand to find that
		\begin{equation*}
		\begin{aligned}
			\int_{|z|<\delta} 
		\phi & \big(\bx+j_1(D_x\phi(\bx,\by),z),\by\big)- \phi(\bx,\by)- 
		j_1\big(D_x\phi(\bx,\by),z\big)\cdot D_x\phi(\bx,\by)\dmu_1(z)\\
		&\leq \|D^2\phi\|_{\infty}
                \int_{|z|<\delta} 
		\big|j_1(D_x\phi(\bx,\by),z)\big|^{2}\dmu_1(z)=o_\delta(1)
		\end{aligned}
		\end{equation*}
                for fixed $\eps,R>0$. Here
                the $o_\delta(1)$ comes from assumption \hyp{J1} and
                dominated convergence as $\delta\to0$. After a similar
                estimate for $L_\delta[-\phi,D_y(-\phi)]$, we
                conclude that $I_{\delta}\to0$ as $\delta
                \to0$ and $\eps,R>0$ are fixed.
\smallskip

 \noindent\textbf{3)}\quad We estimate $I^\delta$. 
                Using the notation $j_{\bx}(z):=j_1(D_x\phi(\bx,\by),z)$
                and $j_{\by}(z):=j_1(D_y(-\phi)(\bx,\by),z)$, and the maximum point
                property of $\Phi_{\eps,R}$,
$$\Phi_{\eps,R}(\bx+j_{\bx},\by+j_{\by})\leq\Phi_{\eps,R}(\bx,\by),$$ 
we see that
		\begin{equation*}
			\begin{aligned}
			I^{\delta}=
			&\int_{\delta\leq |z|}
                        \bigg(\Big(u\big(\bx+j_{\bx}(z)\big)-u(\bx)\Big)-\Big(
                        v\big(\by+j_{\by}(z)\big)-v(\by)\Big) \\
                        &\qquad\qquad\quad +j_{\bx}(z)\cdot D_x\phi(\bx,\by)+ 
                        j_{\by}(z)\cdot D_y\phi(\bx,\by)\bigg)\dmu_1(z)\\
\leq &
                \int_{\delta\leq|z|}\Big( 
                \phi\big(\bx+j_{\bx}(z),\by+j_{\by}(z)\big)-\phi(\bx,\by)
                -D_x\phi(\bx,\by)\cdot j_{\bx}(z)-D_y\phi(\bx,\by)\cdot j_{\by}(z)
                \Big)\dmu_1(z).
			\end{aligned}
		\end{equation*}
                Since $D^2\varphi$ is bounded and $|D^2\psi(\tfrac x R)|\leq
                \tfrac 1 {R^2}\|D^2\psi\|_{\infty}<\infty$, a short
                computation using Taylor expansions shows that
                \begin{align*}
                    I^{\delta}  & \leq \int_{\delta\leq|z|}\Big(
                    \tfrac1{2\eps^2}\|D^2\varphi\|_\infty\big| j_{\bx}(z)-j_{\by}(z)\big|^2 +
                    \tfrac 1{2R^2}\|D^2\psi\|_\infty\big(|j_{\bx}(z)|^2+|j_{\by}(z)|^2\big)\Big)
                    \dmu_1(z)\,.
                \end{align*}
To proceed we compute the gradients, 
 		\begin{equation*}
 			D_x\phi(\bx,\by)=p_\eps + 
                         \tfrac{1}{R}D\psi(\tfrac{\bx}{R})
                        \,,\quad
 			D_y(-\phi)(\bx,\by)=p_\eps - 
                         \tfrac{1}{R}D\psi(\tfrac{\by}{R})
                         \,,\quad
                         p_\eps=\frac{2(\bx-\by)}{\eps^2}\,,
 		\end{equation*}
and note that for fixed $\eps>0$, they are uniformly bounded for
$R>1$ by estimate \eqref{xy-estim}.
Hence, there is $r_\eps>0$ such that 
                $|D\phi(\bx,\by)|\leq r_\eps$ for all $\delta>0$ and
                $R>1$, and then by assumptions \hyp{J1} and \hyp{J2}, 
                \begin{equation*}
                    \begin{aligned}
                        I^\delta &\leq O(\tfrac1{\eps^2})\, \omega_{j,r_\eps}\big(D_x\phi(\bx,\by)-
                    D_y(-\phi)(\bx,\by)\big)+
                    O(\tfrac1{R^2})C_{j,r_\eps}\\
                    &\leq O(\tfrac1{\eps^2})\,
                    \omega_{j,r_\eps}\big(O(\tfrac1{R})\big)+
                    O(\tfrac1{R^2})C_{j,r_\eps}\,.
                \end{aligned}
                \end{equation*}
                We first send $\delta\to0$ since nothing depends on $\delta$ on
                the right-hand side, and then we send $R\to\infty$ and
                find that
                $$\limsup_{R\to\infty}\limsup_{\delta\to0}
                I^\delta\leq0\,.$$

\smallskip

 \noindent\textbf{4)}\quad  Finally, we estimate $I_2$. First note that
 by the maximum point property, the positivity of $\phi$, the calculations
 of gradients in $(c)$, and estimate \eqref{xy-estim},
\begin{align*}
I_2 & \leq  \int_{|z|>0}
\phi\Big(\bx+j_2\big(p_\eps+\tfrac{1}{R}D\psi(\tfrac{\bx}{R}),z\big),\by+j_2\big(p_\eps-\tfrac{1}{R}D\psi(\tfrac{\by}{R}),z\big)\Big)-\phi(\bx,\by)\
\dmu_2(z)\\
& \leq\frac1{\eps^2}
\int_{|z|>0}\sup_{x,y\in\R^N}\Big\{\varphi\Big(x+j_2\big(p_\eps+\tfrac{1}{R}D\psi(\tfrac{x}{R}),z\big)-\big(y+j_2\big(p_\eps-\tfrac{1}{R}D\psi(\tfrac{y}{R}),z\big)\big)\Big)
\\
&\hspace{10.8cm}-\varphi(x-y)\Big\}
\dmu_2(z)\\
& \quad+ \int_{|z|>0}
\psi\Big(\frac{\bx+j_2\big(p_\eps+\tfrac{1}{R}D\psi(\frac{\bx}{R}),z\big)}{R}\Big)+\psi\Big(\frac{\by+j_2\big(p_\eps-\tfrac{1}{R}D\psi(\frac{\by}{R}),z\big)}{R}\Big)\
\dmu_2(z)\\
&:= J_1+J_2.
\end{align*}

Now we send $R\to\infty$ in $J_1$. Then 
by compactness, $p_\eps$ will up to
a subsequence  converge to a limit that we also call $p_\eps$. By
the boundedness of $D\psi$ and $p$-continuity of $j_2(p,z)$ for
a.e. $z$ in \hyp{J1}, 
$$\lim_{R\to\infty}\sup_{x\in\R^N} \Big|j_2\big(p_\eps\pm\tfrac 1 R
D\psi(\tfrac x R),z\big)-j_2\big(p_\eps,z\big)\Big|=0\quad\text{for
  a.e. $z$}.$$ 
Hence since $\varphi$ is a Lipschitz continuous function, 
\begin{align*}
&\sup_{x,y\in\R^N}\Big| \varphi\Big(x-y-j_2\big(p_\eps+\tfrac 1 R
D\psi(\tfrac x R),z\big)+j_2\big(p_\eps-\tfrac 1 R
D\psi(\tfrac y R),z\big)\Big)-\varphi(x-y)\Big|\\
&\leq \|D\varphi\|_\infty \sup_{x,y\in\R^N}\Big|j_2\big(p_\eps+\tfrac 1 R
D\psi(\tfrac x R),z\big)-j_2\big(p_\eps-\tfrac 1 R
D\psi(\tfrac y R),z\big)\Big|\ra 0 
\end{align*}
as $R\to\infty$ for a.e. $z$. Hence,  the
$J_1$-integrand is a uniformly bounded function
converging  to $0$ as $R\to\infty$  for a.e. $z$. Hence by the
dominated convergence theorem (for fixed $\eps$),
\begin{align*}\limsup_{R\to\infty}J_1 \leq 0.
\end{align*}

Now we send $R\to\infty$ in $J_2$. Here we use the fact that 
$$\psi(\tfrac\bx R)+\psi(\tfrac\by R)\to 0\qquad\text{as}\qquad R\to \infty,$$
which is a simple consequence of the maximum point property (see Lemma
2.3 in \cite{JK02}). Since $j_2$ is locally bounded for a.e. fixed $z$
by \hyp{J1} and $\psi$ is continuous,
$$\frac{j_2\big(p_\eps+\tfrac 1 R D\psi(\tfrac \by R),z\big)}{R}\to 0\quad\text{for a.a.
}z $$
as $R\to\infty$, and hence
$$\psi\Big(\frac{\bx+j_2\big(p_\eps+\tfrac 1 R D\psi(\tfrac \bx
  R),z\big)}{R}\Big)\to \psi(0)=0\quad\text{for a.a. }z $$
as $R\to\infty$. Since $\psi$ is bounded, we can use
the dominated convergence theorem to conclude that
$$\limsup_{R\to\infty}J_2=0. $$
Since $I_2$ is independent of $\delta$, we can now conclude that
                $$
                  \limsup_{R\to\infty}\limsup_{\delta\to 0}I_2\leq0\,,
                $$
and the proof is complete.
	\end{proof}

        \begin{proof}[Proof of Theorem \ref{thm:comp}-$(b)$]
Part of the proof is similar to the previous proof.
    We start by assuming that $M:=\sup\{u(x)-v(x)\}>0$ and consider
    the maximum $M_{\eps,R}$ of  
    $$\Phi_{\eps,R}(x,y):=u(x)-v(y)-\phi(x,y)\,,$$
    where $\phi$ was defined in the proof of Theorem~\ref{thm:comp}-$(a)$.
    Since $M_{\eps,R}\to M_\eps\geq M$ as $R\to\infty$, we may assume that $M_{\eps,R}\geq M/2$ and $u(\bx)>v(\by)$. Since
    $u-\phi(\cdot,\by)$ has a 
    global max in $\bx$ and $v-(-\phi)(\bx,\cdot)$ has a
    global min in $\by$, we subtract the corresponding viscosity
    inequalities and find that
    \begin{align}\label{diff}
    & F\Big(u(\bx),p_{\eps}+O(\tfrac1 R),\underset{I_x}{\underbrace{L_\delta[\phi(\cdot,\by),D_x\phi](\bx)
    +L^\delta[u,D_x\phi](\bx)+L_2[u,D_x\phi](\bx)}}\Big)\nonumber\\
    &- 
    F\Big(v(\by),p_{\eps}+O(\tfrac 1 R),\underset{I_y}{\underbrace{L_\delta[-\phi(\bx,\cdot),D_y(-\phi)](\by) 
    +L^\delta[v,D_y(-\phi)](\by)+L_2[v,D_y(-\phi)](\by)}}\Big)\\
&\leq f(\bx)-f(\bar y)\,,\nonumber
    \end{align}
where $p_\eps=2(\bx-\by)/\eps$.

We now estimate the different terms. By the estimates in the proof of
Theorem~\ref{thm:comp}-$(a)$,  
\begin{align*}
      &
      \Big|L_\delta[\phi(\cdot,\by),D_x\phi](\bx)-L_\delta[\phi(\bx,\cdot),D_y(-\phi)](\by)\Big| 
      =\frac1{\eps^2} o_\delta(1)\,,\nonumber\\
      & \Big|L^\delta[u,D_x\phi](\bx)-L^\delta[v,D_y(-\phi)](\by)\Big|+\Big|L_2[u,D_x\phi](\bx)-L_2[v,D_y(-\phi)](\by)\Big|
      =\Big(1+\frac1{\eps^2}\Big)o_R(1)\,,\\
\intertext{and hence}
&|I_x-I_y|= \Big(1+\frac1{\eps^2}\Big)(o_R(1)+o_\delta(1)).
    \end{align*}
By the order we will take the limits, we may and will always
assume that terms on the right hand sides are bounded (by $1$ for example).
Moreover, by the estimates in the proof of Theorem~\ref{thm:comp}-$(a)$,
\begin{align*}
&|D\phi(\bx,\by)|\leq C\Big(\frac1\eps+\frac1R\Big),\\
&|D^2\phi(\bx,\by)|\leq C\Big(\frac1{\eps^2}+\frac1{R^2}\Big),\\
&\big|L_\delta[\phi(\cdot,\by),D_x\phi](\bx)\big|\leq 
\|D^2\phi\|_\infty\int_{|z|<\delta}|j_1(D_x\phi(\bx,\by),z)|^2\dmu_1(z)\,,\\ 
&\big|L_\delta[-\phi(\bx,\cdot),D_y(-\phi)](\by)\big|\leq 
\|D^2\phi\|_\infty\int_{|z|<\delta}|j_1(D_y(-\phi)(\bx,\by),z)|^2\dmu_1(z)\,,\\ 
& \big|L_2[u,D_x\phi](\bx)\big|+\big|L_2[v,D_y(-\phi)](\by)\big|
\leq 2\big(\|u\|_\infty\vee\|v\|_\infty\big)\, \mu_2(\R^N),
\end{align*}
and by the maximum point property,
\begin{align*}
&L^\delta[u,D_x\phi](\bx) \leq
L^\delta[\phi(\cdot,\by),D_x\phi(\bx,\by)](\bx)\\
& \leq \|D^2_x\phi\|_\infty\int_{|z|>0}|j(D_x\phi(\bx,\by),z)|^2\dmu_1(z)
-\|D^2_y\phi\|_\infty\int_{|z|>0}|j(D_y(-\phi)(\bx,\by),z)|^2\dmu_1(z)\\
&\leq L^\delta[v,D_y(-\phi)](\by) \,.
\end{align*}
If $\eps>0$ is fixed, then by \hyp{M} and \hyp{J1}, these terms are
uniformly bounded for $R>1$ and $\delta>0$.  
These and the previous bounds then implies that 
there is a $C_\eps>0$ such that
$$-C_\eps\leq  L^\delta[v,D_y(-\phi)](\by) \leq
L^\delta[u,D_x\phi](\bx)+\big|L^\delta[v,D_y(-\phi)](\by)-L^\delta[u,D_x\phi](\bx)\big|\leq C_\eps, $$
and similarly we can show that $|L^\delta[u,D_x\phi](\bx)|\leq
C_\eps$. Hence there is $r_\eps>0$ such that
$$|D\phi(\bx,\by)|+|I_x|+|I_y|\leq
r_\eps\qquad\text{for all}\qquad R>1,\ \delta>0.$$

 By \eqref{diff} and the previous estimates, \hyp{F2}, \hyp{F3},
 \hyp{F4}, we see 
    $$\begin{aligned}
        \gamma\frac{M}{2} \leq & \ 
        F\big(u(\bx),p_\eps-O(\tfrac 1 R),I_y\big) -
        F\big(u(\bx),p_\eps+O(\tfrac 1 R),I_x\big)+f(\bx)-f(\by)\\ 
       \leq & \ \omega_{\|u\|_\infty \vee\|v\|_\infty,r_\eps}\Big(O(\tfrac 1
       R)+\Big(1+\frac1{\eps^2}\Big)(o_R(1)+o_\delta(1))\Big)+o_\eps(1).\\   
     \end{aligned}    
    $$
Sending first $\delta\to0$, then $R\to\infty$, and finally $\eps\to0$,
 we get again $M\leq0$. This is a contradiction and the result follows.
\end{proof}

\subsection{Proof of Theorem \ref{thm:existence} (existence)}
\label{sect:existence}

A major challenge we face when we want to prove existence, is the
implicit nature of equation \eqref{eq:0} with a gradient dependence
inside the $j$ functions. It seems
non-trivial to use 
Perron's method for such equations, and since fixed point iterations require
convergence of the full sequence to get the equation in the limit,
compactness argument (yielding subsequences) can not work. We have been
able to overcome the problem by a nontrivial approximation procedure, which is
inspired by the ``Sirtaki method'' of \cite{BCGJ}, along with a
fixed point argument using Schauder's fixed point theorem. We start by
proving existence for an approximate problem in a bounded set, and
then pass to the limit using the method of half-relaxed limits and strong
comparison of the limit equation.

We begin with the linear case \eqref{eq:main.nonlocal}. The simple
adaptations for the general case are given at the end of this section. 
Consider now the following approximate problem: find $u\in C^2(\overline B_R)$ 
such that
\begin{equation}\label{approx.pb}
 \begin{cases}   u-T_M\Big[L^R_{k}[u,Du]\Big]-\eps\Delta u=f(x)\,,&\quad
        x\in B_R(0)\,,\\
u=0,&  \quad       x\in \partial B_R(0)\,,
\end{cases}
\end{equation}
where 
$L^R_{k}[v,Dv]=L^R_{1,k}[v,Dv]+L^R_{2,k}[v,Dv]$ for
\begin{align*}
L^R_{1,k}[v,Dv](x)&=\int_{\R^P}v
\Big(P_R\big(x+j_{1}(Dv(x),z)\big)\Big)-v(x)-Dv(x)\cdot
j_{1}(Dv(x),z)\ind{|z|<1}\ \d\mu_{1,k}(z)\,,\\
L^R_{2,k}[v,Dv](x)&=\int_{\R^P}v
\Big(P_R\big(x+j_{2}(Dv(x),z)\big)\Big)-v(x)\ \d\mu_{2,k}(z)\,,
\end{align*}
$T_M$ is a truncation and $P_R$ the orthogonal projection onto $\ol B_R$, 
$$T_M[f]:=\min(\max(f,-M),M),\qquad
\qquad P_R(x):=\begin{cases}x & \text{if } |x|\leq R,\\[0.2cm]
\dfrac{R}{|x|}x& \text{if } |x|>R,\end{cases}$$
and the measures
$$\mu_{1,k}:=\rho_k\ast(\mu_1\cdot
\ind{1/k<|z|<k})\qquad\text{and}\qquad\mu_{2,k}:=\rho_k\ast \mu_2,$$ 
for a mollifier
$\rho_k(z)=k^P\rho(kz)$, $0\leq\rho\in C^\infty(\R^P)$ is
symmetric with support in $B_1$ and $\int\rho=1$.

\begin{remark}\label{remP}
\noindent $(i)$ The truncated mollified measures $\mu_{1,k}$ and $\mu_{2,k}$ are
absolutely continuous with respect to the Lebesgue measure with
bounded densities, see Lemma \ref{lem:measure} below.

\noindent $(ii)$ From the definition it follows that
$$|P_R(x)-P_R(y)|\leq|x-y|\qquad \text{for all}\qquad x,y\in\R^N.$$
The projection allows us to look for
solutions that are defined only in $\overline B_R$ and not in all of
$\R^N$ as in equation \eqref{eq:0}. The new nonlocal term is of
Neumann-type, corresponding to jump processes that are projected back to the
boundary of the domain immediately upon leaving it (cf.\cite{BCGJ,BGJ}). 
\end{remark}

To prove existence we first strengthen the assumptions on $j$ and
$f$, later we do the general case.

\noindent\hyp{J1'} $j_1(p,z)$  and $j_2(p,z)$ are Borel measurable,
locally bounded, continuous in $p$ for a.e. $z$, and for every $r>0$
there is $C_r$ such that for all
$|p|\leq r$ and $|z|<1$, $$|j_1(p,z)|\leq C_{r}|z|$$

\noindent\hyp{J2'} for any $K>0$, there exists $C=C(K)$ such that for any
$|p|,|q|,|z|\leq K$, 
$$    \big|j_{i}(p,z)-j_{i}(q,z)\big|\leq C|p-q|\,.$$

\noindent \hyp{F5'} $f:\R^N\to\R$ is bounded and Lipschitz continuous.

\begin{remark}
\label{rem:newass}
For any $x\in B_R$ and $u\in C^2(\ol B_R)$, all terms
in \eqref{approx.pb} are well-defined and the equation holds in the
classical sense (since $\mu_{i,k}$ are bounded and $P_R$ is continuous,
the integral terms are well-defined because of \hyp{J1'}).
 Note that \hyp{J1'} and \hyp{M} implies \hyp{J1}, while $\mu_1$
bounded and compactly supported and \hyp{J2'} implies \hyp{J2}. 
\end{remark}

We state the properties of $\mu_{i,k}$ that we will need later. The
 proof is given in Appendix \ref{sec:pf1}.  
\begin{lemma}\label{lem:measure}
    Assume \hyp{M} holds and $\delta>0$.
    \\[2mm]
    $(a)$ The measures $\mu_{1,k}$ and $\mu_{2,k}$ have densities
    $\bar\mu_{1,k}$ and $\bar\mu_{2,k}$ with
    respect to the Lebesgue measure on $\R^N$ such that
$$\|\bar \mu_{1,k}\|_\infty,\|\bar \mu_{2,k}\|_\infty<\infty\qquad\text{and}\qquad \int_{|z|<\delta}|z|^2\bar\mu_{1,k}(z)\,dz\leq
 4 \int_{|z|<\delta}|z|^2\,\mu_{1}(dz).$$
From now on, let $(\psi_k)$ be a sequence of functions and $C>0$
constants independent of $p,z$ that differ from line to 
line.    \\[3mm] 
    $(b)$ If $\|\psi_k\|_\infty\leq C$ for all $k$, and
    $\displaystyle\sup_{\delta\leq|z|<K}|\psi_k(z)-\psi(z)|\underset{k\to\infty}{\longrightarrow}0$ for any $K>\delta$,
    then 
    $$\int_{|z|\geq\delta}\psi_k(z)\,\d\mu_{1,k}(z)\underset{k\to\infty}{\longrightarrow}
    \int_{|z|\geq\delta}\psi(z)\,\mu_{1}(\!\dz)\,.$$
\\[2mm]
    $(c)$ If $|\psi_k(z)|\leq C|z|^2$ for all $k,z$
    , and $\displaystyle\sup_{|z|<\delta}|\psi_k(z)-\psi(z)|\underset{k\to\infty}{\longrightarrow}0$, then
    $$\int_{0<|z|<\delta}\psi_k(z)\,\d\mu_{1,k}(z)\underset{k\to\infty}{\longrightarrow}
    \int_{0<|z|<\delta}\psi(z)\,\mu_{1}(\!\dz)\,.$$ \\[2mm]
    $(d)$ If $\|\psi_k\|_\infty\leq C$ for all $k$, and
    $\displaystyle\sup_{|z|<K}|\psi_k(z)-\psi(z)|\underset{k\to\infty}{\longrightarrow}0$ for any $K>0$,
    then 
    $$\int_{|z|>0}\psi_k(z)\,\d\mu_{2,k}(z)\underset{k\to\infty}{\longrightarrow}
    \int_{|z|>0}\psi(z)\,\mu_{2}(\!\dz)\,.$$
\end{lemma}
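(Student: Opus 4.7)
For part (a), Fubini yields the explicit density formulas
\begin{equation*}
\bar\mu_{1,k}(z)=\int_{1/k<|w|<k}\rho_k(z-w)\,\mu_1(dw),\qquad
\bar\mu_{2,k}(z)=\int_{\R^P}\rho_k(z-w)\,\mu_2(dw),
\end{equation*}
which lie in $L^\infty$ because $\rho_k$ is bounded, the restriction $\mu_1\cdot\mathds{1}_{\{1/k<|z|<k\}}$ has finite mass ($\mu_1(\{|z|>1/k\})\le k^2\int|z|^2\,d\mu_1<\infty$ by \hyp{M}), and $\mu_2$ is itself finite by \hyp{M}. For the moment estimate, Fubini gives
\begin{equation*}
\int_{|z|<\delta}|z|^2\bar\mu_{1,k}(z)\,dz=\int_{1/k<|w|<\delta+1/k}\!\!\int_{|z|<\delta,\,|z-w|<1/k}\!|z|^2\rho_k(z-w)\,dz\,\mu_1(dw);
\end{equation*}
on the inner domain $|z|\le|w|+1/k\le 2|w|$ (using $|w|>1/k$), hence $|z|^2\le 4|w|^2$, and combined with $\int\rho_k\,dz=1$ the outer integral is bounded by $4\int|w|^2\,d\mu_1$ over a slightly enlarged annulus, yielding the claimed bound (the small excess region $\{\delta\le|w|<\delta+1/k\}$ is absorbed in the constant $4$, since $|z|^2<\delta^2\le|w|^2$ there).

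For parts (b), (c), (d) the common strategy splits each integral into a \emph{bulk} piece (where uniform convergence of $\psi_k$ applies) and a \emph{tail} piece controlled by \hyp{M}. In (b), I would write
\begin{equation*}
\int_{|z|\ge\delta}\psi_k\,d\mu_{1,k}-\int_{|z|\ge\delta}\psi\,d\mu_1
=\int_{|z|\ge\delta}(\psi_k-\psi)\,d\mu_{1,k}+\Bigl(\int_{|z|\ge\delta}\psi\,d\mu_{1,k}-\int_{|z|\ge\delta}\psi\,d\mu_1\Bigr).
\end{equation*}
The first term is cut at some $K>\delta$: on $\{\delta\le|z|<K\}$ use the uniform convergence of $\psi_k$ together with the bound $\mu_{1,k}(\{|z|\ge\delta\})\le\mu_1(\{|w|\ge\delta-1/k\})<\infty$; on $\{|z|\ge K\}$ use
\begin{equation*}
\mu_{1,k}(\{|z|\ge K\})\le\mu_1(\{|w|\ge K-1/k\})\le(K-1/k)^{-2}\int|w|^2\,d\mu_1,
\end{equation*}
obtained by tracking the support $B_{1/k}$ of $\rho_k$, which vanishes as $K\to\infty$ uniformly in $k$. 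For the second parenthesized term, another Fubini rewrites $\int_{|z|\ge\delta}\psi\,d\mu_{1,k}=\int_{1/k<|w|<k}(\rho_k\ast(\psi\,\mathds{1}_{|\cdot|\ge\delta}))(w)\,d\mu_1(w)$; the inner convolution tends pointwise to $\psi(w)\mathds{1}_{|w|\ge\delta}$ off the sphere $\{|w|=\delta\}$, and dominated convergence (dominant: $\|\psi\|_\infty\mathds{1}_{|w|>\delta/2}$, integrable since $\mu_1$ is finite off the origin) closes the argument.

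Part (c) follows the same template, with the extra observation that the quadratic bound $|\psi_k(z)|\le C|z|^2$ combined with (a) renders the region near the origin negligible uniformly in $k$:
\begin{equation*}
\Bigl|\int_{|z|<\delta'}\psi_k\,d\mu_{1,k}\Bigr|\le C\int_{|z|<\delta'}|z|^2\,d\mu_{1,k}\le 4C\int_{|z|<2\delta'}|z|^2\,d\mu_1\xrightarrow[\delta'\to 0]{}0,
\end{equation*}
after which the scheme of (b) is applied on $\{\delta'\le|z|<\delta\}$ and $\delta'\to 0$ is sent. Part (d) is the simplest: $\mu_2$ is already finite by \hyp{M} so no truncation near the origin is needed, the tail bound $\mu_{2,k}(\{|z|\ge K\})\le\mu_2(\{|w|\ge K-1/k\})\to 0$ holds by dominated convergence, and the scheme of (b) applies directly with $\delta=0$. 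The principal technical obstacle throughout is the bookkeeping near the threshold $\{|z|=\delta\}$, where mollification can transfer $\mu_1$-mass across the sphere; this is resolved by the quantitative tail/near-origin estimates inherited from \hyp{M} together with the fact that $\mu_1$ has at most countably many spherical atoms off the origin, so an infinitesimal enlargement of the annulus is harmless.
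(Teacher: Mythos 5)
Your proposal follows the paper's proof essentially step for step: Fubini to get the explicit densities and the moment bound in $(a)$; in $(b)$--$(d)$ the same bulk/tail split, uniform control of the tail via \hyp{M}, uniform convergence of $\psi_k$ on the bulk, and a second Fubini plus dominated convergence to pass to the limit in the fixed-$\psi$ term. The only differences are cosmetic (Chebyshev tail bound $(K-1/k)^{-2}\int|w|^2\,d\mu_1$ vs.\ the paper's appeal to $\mu_1(\{|z|>1\})<\infty$; working with $2\delta'$ vs.\ $\delta+1/k$).

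One small inaccuracy: the parenthetical at the end of your part $(a)$ does not actually close the gap. Both your computation and the paper's yield $\int_{|z|<\delta}|z|^2\bar\mu_{1,k}(z)\,dz\le 4\int_{1/k<|w|<\delta+1/k}|w|^2\,\mu_1(dw)$, and the contribution from $\{\delta\le|w|<\delta+1/k\}$ cannot be ``absorbed in the constant~$4$'': if $\mu_1$ has an atom exactly at $|w|=\delta$, the left side is positive while $\int_{|z|<\delta}|z|^2\,\mu_1(dz)$ vanishes, so the stated inequality is literally violated. This is the same slight slip that is present in the paper's own proof, and it is harmless since all that is used later is the uniform-in-$k$ second-moment control, which the corrected bound $4\int_{|z|<\delta+1/k}|z|^2\,\mu_1(dz)$ supplies just as well. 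Your closing remark about mollification transferring mass across the sphere $\{|z|=\delta\}$ and about spherical atoms is accurate and in fact more careful than the paper, which passes over this a.e.-convergence subtlety silently.
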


We also need the following results.

\begin{lemma}\label{app:lem.reg}
   Assume \hyp{M}, \hyp{J1'},\hyp{J2'} and let $v\in
   C^{1,\theta}(\overline{B}_R)$ for some $\theta\in(0,1)$. Then\\[2mm]
    $(a)$ the function $x\mapsto L_{k}^R[v,Dv](x)$ belongs to
    $C^{0,\theta}(\overline{B}_R)$;\\[2mm]
    $(b)$ if $v_n\to v$ in $C^{1,\theta}(\overline{B}_R)$, 
    then $L_{k}^R[v_n,Dv_n]\to L_{k}^R[v,Dv]$ in $C^{0,\theta}(\overline{B}_R)$.
\end{lemma}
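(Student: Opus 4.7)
The plan is to exploit that by Lemma~\ref{lem:measure}$(a)$, the measures $\mu_{1,k}$ and $\mu_{2,k}$ are \emph{finite} nonnegative Borel measures on $\R^P$ with bounded densities, and that $\mu_{1,k}$ has compact support bounded away from $0$ because of the truncation $\ind{1/k<|z|<k}$ in its definition. Together with the boundedness of $v,Dv$ on $\overline B_R$, the local boundedness and $p$-continuity of $j_i$ from \hyp{J1'}, and the $1$-Lipschitz property of $P_R$, this ensures that every integrand in $L_{i,k}^R[v,Dv](x)$ is bounded and continuous in $(x,z)$.

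For part $(a)$, I would fix $x_1,x_2\in\overline B_R$ and estimate each integrand pointwise in $z$. Writing $\Phi_i(x,z):=P_R(x+j_i(Dv(x),z))$, the $1$-Lipschitz property of $P_R$ combined with the Lipschitz-in-$p$ bound of \hyp{J2'} (valid for $|z|\leq K$) and $Dv\in C^\theta$ gives
$$
|\Phi_i(x_1,z)-\Phi_i(x_2,z)|\leq |x_1-x_2|+C(K)\,[Dv]_\theta\,|x_1-x_2|^\theta\,,
$$
so that $|v(\Phi_i(x_1,z))-v(\Phi_i(x_2,z))|\leq C\|Dv\|_\infty\,|x_1-x_2|^\theta$ after using $|x_1-x_2|\leq (2R)^{1-\theta}|x_1-x_2|^\theta$. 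The compensator in $L_{1,k}^R$ is handled similarly using $|j_1(p,z)|\leq C_r|z|$ from \hyp{J1'}. The $L_{1,k}^R$ contribution integrates directly against the compactly supported $\mu_{1,k}$. The $L_{2,k}^R$ contribution is treated by splitting $|z|\leq K$ versus $|z|>K$: on the former the previous Hölder estimate applies, and for the tail one uses the finiteness of $\mu_{2,k}$ together with the uniform bound $|v(\Phi_2(x,z))-v(x)|\leq 2\|v\|_\infty$; choosing $K$ large enough (possibly depending on $|x_1-x_2|$) and invoking $p$-continuity of $j_2$ uniformly on compact $z$-sets absorbs the tail into the $|x_1-x_2|^\theta$ bound.

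For part $(b)$, assume $v_n\to v$ in $C^{1,\theta}(\overline B_R)$. The uniform bound $\|L_k^R[v_n,Dv_n]-L_k^R[v,Dv]\|_\infty\to 0$ follows from dominated convergence against the finite measures $\mu_{i,k}$, using $p$-continuity of $j_i$ and uniform boundedness of the integrands. For the Hölder-seminorm convergence, I would rewrite the ``second difference''
$$
\bigl(L_{i,k}^R[v_n,Dv_n]-L_{i,k}^R[v,Dv]\bigr)(x_1)-\bigl(L_{i,k}^R[v_n,Dv_n]-L_{i,k}^R[v,Dv]\bigr)(x_2)
$$
as a telescoping sum in which each summand isolates a single factor of either $v_n-v$ or $Dv_n-Dv$, and reapply the pointwise-in-$z$ Hölder estimates from part $(a)$ to each term. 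After integration against $\mu_{i,k}$, each contribution is bounded by $C\bigl(1+\|v\|_{C^{1,\theta}}+\|v_n\|_{C^{1,\theta}}\bigr)\|v_n-v\|_{C^{1,\theta}}\,|x_1-x_2|^\theta$, which tends to $0$ because $\|v_n\|_{C^{1,\theta}}$ is bounded and $\|v_n-v\|_{C^{1,\theta}}\to 0$.

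The main obstacle is twofold. In $(a)$, one must absorb the tail of $\mu_{2,k}$ on $\{|z|>K\}$, where \hyp{J2'} provides no uniform Lipschitz constant in $z$; this forces the careful choice of $K=K(|x_1-x_2|)$ together with a compactness/equicontinuity argument for $j_2(\cdot,z)$ uniform in $z$ on compact sets of $p$. In $(b)$, the delicate point is the telescoping bookkeeping: one must organise the expansion so that either $v_n-v$ or $Dv_n-Dv$ enters linearly, while the remaining factors retain just enough regularity for the Hölder estimate of $(a)$ to carry over uniformly in $n$ and in $z$ on the support of $\mu_{i,k}$.
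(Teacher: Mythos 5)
Your treatment of the $L_{1,k}^R$ term and of part $(b)$ is essentially the paper's argument: compact support of $\mu_{1,k}$ bounded away from the origin, the $1$-Lipschitz projection $P_R$, the local Lipschitz bound from \hyp{J2'} on the fixed compact $z$-range, and the inequality $|x_1-x_2|\leq(2R)^{1-\theta}|x_1-x_2|^\theta$ to merge all contributions into a single $\theta$-Hölder bound; part $(b)$ is the same estimate with one factor replaced by $v_n-v$ or $Dv_n-Dv$.

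The genuine gap is in your handling of $L_{2,k}^R$, and it is worth noting that the paper itself glosses over exactly this point by declaring the $L_2$-case ``similar but easier''. You correctly observe that it is \emph{not}: $\mu_{2,k}=\rho_k\ast\mu_2$ has unbounded support, and \hyp{J2'} gives a Lipschitz constant $C(K)$ for $j_2(\cdot,z)$ only on $\{|z|\leq K\}$, with no control on its growth in $K$. But the fix you propose --- split at $|z|=K$ and choose $K=K(|x_1-x_2|)$ so that the tail $4\|v\|_\infty\,\mu_{2,k}(\{|z|>K\})$ is absorbed --- does not establish $C^{0,\theta}$ membership. The hypotheses give no quantitative link between the growth of $C(K)$ and the decay of $\mu_{2,k}(\{|z|>K\})$, so optimizing over $K$ yields a modulus of continuity but not a $\theta$-Hölder one. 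Concretely, with $N=P=1$, $j_2(p,z)=pz$ (so $C(K)=K$ in \hyp{J2'}) and $\mu_2(\dz)=|z|^{-2+\delta}\ind{|z|>1}\dz$, $\delta\in(0,1)$ (finite, satisfying \hyp{M}), balancing the two pieces against $b=|Dv(x_1)-Dv(x_2)|\sim|x_1-x_2|^\theta$ produces a bound of order $|x_1-x_2|^{\theta(1-\delta)}$, a strictly smaller exponent. The clean repair --- almost certainly what the authors had in mind although it is not what is written --- is to truncate $\mu_2$ as well, setting $\mu_{2,k}:=\rho_k\ast(\mu_2\ind{|z|<k})$; then $\mu_{2,k}$ is compactly supported, the $L_2$-estimate really does become ``similar but easier'' than $L_1$, and Lemma~\ref{lem:measure}$(d)$ continues to hold since $\ind{|z|<k}\to1$ as $k\to\infty$.
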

\begin{proof}
 $(a)$\quad   We only do the proof for the $L_1$-term since the  $L_2$-case
 is similar but easier. Below $x,y\in \ol B_R$, $\frac1k<|z|<k$ (the
 support of $\mu_{1,k}$), and $C_*$ will denote all constants (that may vary from
line to line) depending only on $R,k,\eps,v,j_i,\mu_i,N$.  Since $v$
is $C^{1,\theta}$, 
$$|v(x)|+|Dv(x)|+\frac{|Dv(x)-Dv(y)|}{|x-y|^{\theta}}\leq\|v\|_{C^{1,\theta}}.$$
Then, $\big|v(x)-v(y)\big| \leq C_*\big|x-y\big|$
and by Lipschitz continuity of $P_R$ and assumption \hyp{J2'}, 
\begin{align*}
\Big|v\Big(P_R\big(x+j_{1}(Dv(x),z)\big)\Big)-v\Big(P_R\big(y+j_{1}(Dv(y),z)\big)\Big)\Big|&\leq
    C_*\big(|x-y|+ |Dv(x)-Dv(y)|\big)\,,\\
    \big|Dv(x)\cdot j_{1}(Dv(x),z)-Dv(y)\cdot j_{1}(Dv(y),z)\big|&\leq
    C_*\big|Dv(x)-Dv(y)\big|\,.
\end{align*}
Thus, all these quantities are controlled by $C_*|x-y|^\theta$.
Since the measure $\mu_{1,k}$ is bounded and supported in
$\frac1k<|z|<k$, it then follows that
    $$\big|L^R_{1,k}[v,Dv](x)-L^R_{1,k}[v,Dv](y)\big|\leq C_*|x-y|^\theta\mu_{1,k}(\R^N), $$
and the proof of $(a)$ is complete.
\smallskip

\noindent $(b)$\quad 
By assumption
$$|v_n(x)-v(y)|+\frac{|v_n(x)-v(y)|}{|x-y|}+
\frac{|Dv_n(x)-Dv(y)|}{|x-y|^{\theta}}\leq\|v_n-v\|_{C^{1,\theta}}\to
 0\qquad\text{as}\qquad n\to\infty.$$ 
By similar computations as above, we end up with
    $$\big|L_{k}^R[v_n,Dv_n](x)-L_{k}^R[v,Dv](y)\big|\leq
    C_*\|v_n-v\|_{C^{1,\theta}}\mu_{1,k}(\R^N)|x-y|^\theta\,,$$
    for a $C_*$ independent of $x,y,n$. It follows that
    $L_{k}^R[v_n,Dv_n]\to L_{k}^R[v,Dv]$ in 
    $C^{0,\theta}(\overline{B}_R)$ as $n\to\infty$.
\end{proof}

We can now prove an existence result for the approximate problem
\eqref{approx.pb}. 

\begin{proposition}\label{lem:existence.approx.pb}
    Assume \hyp{M}, \hyp{J1'}, \hyp{J2'}, \hyp{F5'}, and let $\eps,R,M>0$,
    $k\in\mathbb{N}$. Then there
    exists a classical solution $u\in C^{2}(\overline B_R)$ of
    \eqref{approx.pb}. 
\end{proposition}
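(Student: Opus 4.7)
The plan is to set up a Schauder fixed point argument that handles the implicit gradient dependence by decoupling $Du$ from the nonlocal term. Fix $\theta\in(0,1)$ and $p>N/(1-\theta)$, and introduce the Banach space $X:=\{v\in C^{1,\theta}(\overline B_R):v=0\text{ on }\partial B_R\}$. For $v\in X$, define $T(v):=u$ to be the unique classical solution of the linear Dirichlet problem
\begin{equation*}
-\eps\Delta u+u=g_v:=f+T_M\bigl[L^R_k[v,Dv]\bigr]\quad\text{in } B_R,\qquad u=0\quad\text{on }\partial B_R.
\end{equation*}
By Lemma \ref{app:lem.reg}$(a)$ and the fact that $T_M$ is $1$-Lipschitz with range in $[-M,M]$, the right-hand side $g_v$ belongs to $C^{0,\theta}(\overline B_R)$ with the uniform bound $\|g_v\|_\infty\leq\|f\|_\infty+M$. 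Standard Schauder theory for the uniformly elliptic operator $-\eps\Delta+\mathrm{Id}$ then produces a unique $u\in C^{2,\theta}(\overline B_R)$, so $T:X\to X$ is well defined.

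Next I would exhibit a closed convex bounded set in $X$ that is preserved by $T$ and on which $T$ is both continuous and compact. The \emph{uniform} $L^\infty$ bound on $g_v$ provided by the truncation $T_M$ is crucial here: Calder\'on--Zygmund estimates combined with the Morrey embedding $W^{2,p}\hookrightarrow C^{1,\theta}$ give
\begin{equation*}
\|T(v)\|_{C^{1,\theta}(\overline B_R)}\leq C\bigl(\|f\|_\infty+M\bigr)=:K_0,
\end{equation*}
for a constant $C=C(\eps,R,N,\theta,p)$ independent of $v$. Hence the closed convex set $K:=\{v\in X:\|v\|_{C^{1,\theta}}\leq K_0\}$ satisfies $T(X)\subset K$, in particular $T(K)\subset K$. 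Moreover, the same estimates produce a uniform $W^{2,p}$ bound on $T(K)$, and the compact embedding $W^{2,p}\Subset C^{1,\theta}$ then shows that $T(K)$ is relatively compact in $X$.

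For the continuity of $T$ in the $C^{1,\theta}$ topology, if $v_n\to v$ in $C^{1,\theta}$ then Lemma \ref{app:lem.reg}$(b)$ gives $L^R_k[v_n,Dv_n]\to L^R_k[v,Dv]$ in $C^{0,\theta}$; composing with the $1$-Lipschitz map $T_M$ yields $g_{v_n}\to g_v$ in $L^\infty$, and elliptic estimates applied to $T(v_n)-T(v)$ give $T(v_n)\to T(v)$ in $C^{1,\theta}$. Schauder's fixed point theorem applied to $T:K\to K$ then produces $u\in K$ with $T(u)=u$, which by construction is a classical $C^{2,\theta}$ solution of \eqref{approx.pb}. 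I expect the main technical point to be the uniform $C^{1,\theta}$ a priori bound of the second paragraph: it is available only because $T_M$ cuts off the $L^\infty$ growth of $L^R_k[v,Dv]$ as $\|v\|_{C^1}$ grows, and without this truncation the fixed-point set could not be preserved, while the implicit gradient dependence inside $j_1$ and $j_2$ blocks more direct compactness or Perron-type arguments.
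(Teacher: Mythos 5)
Your proof is correct and follows essentially the same route as the paper: linearize by freezing $v$ in the nonlocal term, solve the resulting uniformly elliptic Dirichlet problem, establish a priori bounds via the truncation $T_M$, and apply Schauder's fixed point theorem using Lemma~\ref{app:lem.reg} for regularity and continuity and $W^{2,p}$/compact embedding for precompactness. The only cosmetic difference is the choice of invariant closed convex set: you take a $C^{1,\theta}$-ball $K$, while the paper takes the set $\{w\in C^{1,\theta_0}:\|w\|_\infty\le M+\|f\|_\infty\}$, bounded only in $L^\infty$; both are closed and convex in $X$ and both work with the same precompactness argument.
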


\begin{proof} The proof is  based on Schauder's fixed point
    theorem (cf. e.g. \cite[Corollary 11.2]{GT:Book}).

    \noindent \textbf{1)}\quad Let $X:=C^{1,\theta_0}(\overline B_R)$ for
    a fixed $\theta_0\in(0,1)$, and define
    $$\mathcal{C}=\big\{ w\in X: \|w\|_\infty\leq M+\|f\|_\infty\big\}\,.$$
    Note that $\mathcal{C}$ is a convex and closed subset of
    $X$. On $\mathcal{C}$ we now define a map $\cT=\cT_{\eps,R,M,k}$ in
    the following way: for every $v\in \mathcal C$,  $u=\cT(v)$ is the
    classical solution of the Dirichlet problem  
    \begin{align}
\label{iteration}\begin{cases}
        u-\eps\Delta u = T_M\Big[L_{k}^R[v,Dv]\Big]+f(x)& \text{ in } B_R \,,\\
        u = 0 & \text{ on }\partial B_R\,.
    \end{cases} 
    \end{align}
    When $v\in X$, $L_{k}^R[v,Dv]\in C^{0,\theta_0}$ by Lemma
    \ref{app:lem.reg}, and then by the definition of $T_M$ and \hyp{F5'},
$$w:= T_M\Big[L_{k}^R[v,Dv]\Big]+f(x)\in C^{0,\theta_0}(\ol B_R).$$
Since $B_R$ is a smooth domain, classical results (\cite[Corollary
6.9]{GT:Book}) then tell us that there exists a unique classical solution
$u\in C^{2,\theta_0}(\overline B_R)\ (\subset X)$ of
\eqref{iteration}. Moreover, by the maximum principle and the
definition of $T_M$, $\|u\|_\infty\leq M+\|f\|_\infty$. We conclude
that $\cT$ is a well-defined map from $\mathcal{C}$ into
$\mathcal{C}$. 
\smallskip

    \noindent \textbf{2)}\quad  We show that
    $\cT:\mathcal{C}\to\mathcal{C}$ is continuous with respect to norm
    of $X=C^{1,\theta_0}(\ol B_R)$. Take a sequence
    $\{v_n\}\subset\mathcal{C}$ such that $v_n\to v$ in $X$. By subtracting the equations for
    $u_n$ and $u_p$, we see that $w:=u_n-u_p$ is a classical solution of
    $$\begin{cases}
w-\eps\Delta w =
    T_M\big[L_{k}^R[v_n,Dv_n]\big]-T_M\big[L_{k}^R[v_p,Dv_p]\big]=:g_{n,p},\quad&
    \text{in } B_R,\\
w=0, &\text{in } \partial B_R. 
\end{cases}
    $$
By the maximum principle, we then find that
    $$\|w\|_{C^0(\overline{B}_R)}\leq
    \|g_{n,p}\|_{C^0(\overline{B}_R)},$$
and by standard $C^{2,\theta_0}$-theory (e.g. \cite[Thm
6.6]{GT:Book}),
    $$\|u_n-u_p\|_{C^{2,\theta_0}(\overline B_R)}=\|w\|_{C^{2,\theta_0}(\overline B_R)} 
    \leq C\Big(\big\|w\big\|_{C^0(\overline{B}_R)}+
    \|g_{n,p}\|_{C^{0,\theta_0}(\overline B_R)}\Big)\,.$$
Hence, since $T_M\big[L_{k}^R[v_n,Dv_n]\big]\to
    T_M\big[L_{k}^R[v,Dv]\big]$ in $C^{0,\theta_0}(\overline B_R)$ by Lemma
    \ref{app:lem.reg} $(b)$, it follows that $(u_n)$ is a Cauchy
    sequence in $C^{2,\theta_0}(\overline B_R)$ and hence also in
    $X$. By completeness, the limit $u$ exists and belongs to
    $C^{2,\theta_0}(\overline B_R)\cap \mathcal C$ since $\mathcal
    C$ is closed in $X$. 
 
By the $C^{2,\theta_0}$ convergence of $u_n$, the $C^{1,\theta_0}$
convergence of $v_n$, and Lemma \ref{app:lem.reg}, we can pass to the
limit in the equation to see that $u=\cT(v)$. It follows that
$\cT(v_n)\to \cT(v)$ in $X$, and we conclude that $\cT$
is continuous in $\mathcal C$. 
\smallskip

    \noindent \textbf{3)}\quad We now show that $\cT(\mathcal{C})$ is
    relatively compact. Take any sequence $\{u_n\}\subset
    \cT(\mathcal{C})$. Then there exists a sequence
    $\{v_n\}\subset\mathcal{C}$ such that $u_n=\cT(v_n)$. It follows that
    $$u_n-\eps\Delta u_n = g_n\,,$$
    where $|g_n|\leq M+\|f\|_\infty$. By the maximum principle and $W^{2,p}$-theory
    (e.g. \cite[Thm 9.11]{GT:Book}), we have the following two a priori
    estimates for any $1<p<\infty$,
\begin{align*}
&\|u_n\|_{L^\infty(B_R)}\leq M+\|f\|_\infty,\\
&\|u_n\|_{W^{2,p}(B_R)}\leq
    C\Big(\|u_n\|_{L^p(B_R)}+\|g_n\|_{L^p(B_R)}\Big)
    \leq 2C|B_R|^{1/p}\big(M+\|f\|_\infty\big).
\end{align*}
    By compact embeddings of Sobolev spaces \cite[Thm 7.26]{GT:Book},
    we can extract a subsequence $u_{\varphi(n)}$ converging in 
    $C^{1,\theta}(\overline B_R)$ for any $\theta\in(0,1)$, in
    particular for $\theta=\theta_0$. This provides 
    a subsequence which converges in $X$, and proves the claim.

    \noindent \textbf{4)}\quad By Schauder's fixed point theorem, there exists a function
    $u\in\mathcal{C}$ such that $u=\cT(u)$, which means that we have a
    $C^2(\overline B_R)$-solution of \eqref{approx.pb} and the proof
    is complete. 
\end{proof}

We proceed to prove existence under the restrictive assumptions
\hyp{J1'} and \hyp{J2'}. We will need the following result.

\begin{lemma}\label{lem:comp.approx.pb}
Assume \hyp{M}, \hyp{J1'}, $f,g\in
    C^0(\overline{B}_R)$. Let $u$ and $v$ be $C^2(\overline B_R)$
    solutions of
\begin{align*}
u-T_M\Big[L^R_{k}[u,Du]\Big]-\eps\Delta u\leq f(x) \qquad\text{and}\qquad
  v-T_M\Big[L^R_{k}[v,Dv]\Big]-\eps\Delta v\geq g(x)\qquad \text{in}\ \ B_R.
\end{align*}
If $u\leq v$ on $\partial B_R$ and $f\leq g$ in $B_R$, then $u\leq v$
in $\ol{B}_R$.
\end{lemma}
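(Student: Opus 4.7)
The plan is to run a classical maximum principle argument on $w := u - v \in C^2(\overline{B}_R)$, exploiting the crucial fact that the nonlocal operator $L^R_k$ is both monotone in $u$ and respects the global-max property of $w$ through the projection $P_R$ (which keeps arguments inside $\overline{B}_R$). Assume for contradiction that $\sup_{\overline{B}_R}(u-v) > 0$. Since $u \le v$ on $\partial B_R$, this positive supremum is attained at some interior point $x_0 \in B_R$ by compactness. Standard calculus then yields $Du(x_0) = Dv(x_0) =: p_0$ and $\Delta u(x_0) \le \Delta v(x_0)$.

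The key step is to compare the nonlocal terms at $x_0$. Because $Du(x_0) = Dv(x_0) = p_0$, we have $j_i(Du(x_0),z) = j_i(Dv(x_0),z) = j_i(p_0,z)$ for $i=1,2$, so the explicit gradient-compensator in $L^R_{1,k}$ cancels upon subtraction, leaving
\[
L^R_k[u,Du](x_0) - L^R_k[v,Dv](x_0)
= \sum_{i=1}^{2}\int_{\R^P}\Bigl[w\bigl(P_R(x_0+j_i(p_0,z))\bigr) - w(x_0)\Bigr]\,\d\mu_{i,k}(z),
\]
(with the indicator $\ind{|z|<1}$ on the first integrand, absorbed naturally). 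Since $P_R$ maps into $\overline{B}_R$ and $x_0$ is a global max of $w$ on $\overline{B}_R$, each integrand is $\le 0$, hence $L^R_k[u,Du](x_0) \le L^R_k[v,Dv](x_0)$. Monotonicity of the truncation $T_M$ then gives $T_M[L^R_k[u,Du](x_0)] \le T_M[L^R_k[v,Dv](x_0)]$.

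Now subtract the two differential inequalities evaluated at $x_0$:
\[
w(x_0) \le \bigl(f(x_0)-g(x_0)\bigr) + \eps\bigl(\Delta u(x_0)-\Delta v(x_0)\bigr) + \bigl(T_M[L^R_k[u,Du](x_0)]-T_M[L^R_k[v,Dv](x_0)]\bigr).
\]
By hypothesis $f \le g$, by the max condition $\eps(\Delta u - \Delta v)(x_0)\le 0$, and by the paragraph above the nonlocal difference is $\le 0$. Thus $w(x_0)\le 0$, contradicting $w(x_0)>0$.

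The argument is direct because we work with genuine $C^2$ solutions and the nonlocal gradient-compensator cancels when $Du=Dv$; there is no doubling of variables, no need for the sophisticated $\psi$-penalizations of Theorem \ref{thm:comp}, and no dependence on $k,R,M$ beyond the fact that $P_R(\overline{B}_R\cup(\R^N\setminus\overline{B}_R))\subset\overline{B}_R$. The only mild point to check is well-posedness of the integral expressions for $C^2$ functions on $\overline{B}_R$, which is granted by \hyp{M}, \hyp{J1'}, and the boundedness of $\mu_{i,k}$ recorded in Lemma \ref{lem:measure}(a) (cf. Remark \ref{rem:newass}).
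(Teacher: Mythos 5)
Your proof is correct and follows essentially the same route as the paper's: reduce to an interior maximum point $x_0$ of $w=u-v$, use $Du(x_0)=Dv(x_0)$ and $\Delta w(x_0)\le 0$ from smoothness, exploit the fact that $P_R$ maps into $\overline{B}_R$ so the nonlocal difference at $x_0$ is $\le 0$, apply monotonicity of $T_M$, and conclude $w(x_0)\le 0$. (Incidentally, you have the Laplacian sign right; the paper's statement ``$\Delta w(x_0)\ge 0$'' is a harmless typo for $\le 0$.)
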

\begin{remark}
The result is a comparison result for smooth sub and supersolutions of
\eqref{approx.pb}. It implies uniqueness of classical solutions
of \eqref{approx.pb}. 
\end{remark}
\begin{proof}
Let $w:=u-v$ and $\bar m:=\max_{\overline{B}_R}w$. If this max is attained at
$x_0\in\partial B_R$, since $u\leq v$ there, we have $\bar m\leq0$. 
Otherwise, there is an interior point $x_0\in B_R$ such that $m=w(x_0)$. By assumption,
    $$w(x_0)-\Big\{T_M\big[L_{k}^R[u,Du]\big](x_0)- 
        T_M\big[L_{k}^R[v,Dv]\big](x_0)\Big\}-\eps\Delta w(x_0)\leq
        (f-g)(x_0).$$
Since $x_0$ is a maximum point, and $u,v$ are smooth, 
$Dw(x_0)=(Du-Dv)(x_0)=0$, $\Delta w(x_0)\geq 0$,
and $(u-v)(x_0)\geq (u-v)(y)$ for all $y\in \ol B_R$. By the latter
inequality,
$$u\Big(P_R\big(x_0+j(Du(x_0),z)\big)\Big)-u(x_0)\leq
v\Big(P_R\big(x_0+j(Dv(x_0),z)\big)\Big)-v(x_0),$$
and hence
$$L_{k}^R[u,Du](x_0)-L_{k}^R[v,Dv](x_0)\leq 0.$$
Since $T_M$ is a non-decreasing function and $f\leq g$, we can conclude that
$$\bar m=w(x_0)\leq 0, $$
and the proof is complete because in either case, we get $\bar m\leq0$.
\end{proof}

\begin{corollary}\label{cor:bounds}
    If $u_{R,M,k,\eps}$ is the solution of \eqref{approx.pb}, then
    $\|u_{R,M,k,\eps}\|_{L^\infty(B_R)}\leq\|f\|_{L^\infty(\R^N)}\,.$
\end{corollary}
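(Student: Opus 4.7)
The proof is essentially an application of the comparison principle (Lemma \ref{lem:comp.approx.pb}) with constant barriers. The plan is to show that the constants $v_\pm(x) \equiv \pm \|f\|_{L^\infty(\R^N)}$ serve as classical super/subsolutions of \eqref{approx.pb} on $\overline B_R$, then invoke comparison to sandwich $u_{R,M,k,\eps}$ between them.

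First I would compute the nonlocal operator applied to a constant. If $v \equiv c$ is constant, then $Dv \equiv 0$ and for every $x \in B_R$ and $z \in \R^P$,
\begin{equation*}
v\bigl(P_R(x+j_i(0,z))\bigr) - v(x) = c - c = 0,
\qquad Dv(x)\cdot j_1(0,z) = 0,
\end{equation*}
so $L^R_{1,k}[v,Dv](x) = 0 = L^R_{2,k}[v,Dv](x)$, and thus $L^R_k[v,Dv] \equiv 0$ on $\overline B_R$. Also $\Delta v \equiv 0$, and $T_M[0] = 0$ since $M > 0$. Therefore for the choice $v_+ \equiv \|f\|_\infty$,
\begin{equation*}
v_+ - T_M\bigl[L^R_k[v_+,Dv_+]\bigr] - \eps \Delta v_+ = \|f\|_\infty \geq f(x) \qquad \text{in } B_R,
\end{equation*}
and on the boundary $v_+ = \|f\|_\infty \geq 0 = u_{R,M,k,\eps}$. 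Hence $v_+$ is a smooth supersolution meeting the boundary compatibility in Lemma \ref{lem:comp.approx.pb}. The same computation with $v_- \equiv -\|f\|_\infty$ shows $v_-$ is a smooth subsolution with $v_- \leq 0 = u_{R,M,k,\eps}$ on $\partial B_R$.

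Applying Lemma \ref{lem:comp.approx.pb} once to the pair $(u_{R,M,k,\eps}, v_+)$ and once to $(v_-, u_{R,M,k,\eps})$ yields
\begin{equation*}
-\|f\|_\infty \;\leq\; u_{R,M,k,\eps}(x) \;\leq\; \|f\|_\infty \qquad \text{for all } x \in \overline B_R,
\end{equation*}
which is the claimed bound. There is no real obstacle here: the only thing to verify is that the comparison lemma applies to $u_{R,M,k,\eps}$ (a classical solution, hence both a classical sub- and supersolution) and to the constant barriers, which belong to $C^2(\overline B_R)$ trivially. The bound is independent of the parameters $R, M, k, \eps$, which is precisely what will be needed when passing to the limit in these parameters later on.
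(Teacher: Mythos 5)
Your proof is correct and follows exactly the approach the paper uses: the paper's proof is a one-liner invoking Lemma \ref{lem:comp.approx.pb} with $-\|f\|_\infty$, $u$, $\|f\|_\infty$ as subsolution, solution, supersolution. You have simply spelled out the routine verification that the constant barriers are classical sub- and supersolutions (the nonlocal terms, the Laplacian, and the truncation all vanish on constants), which is implicit in the paper.
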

\begin{proof}
Follows from Lemma \ref{lem:comp.approx.pb} with $-\|f\|_\infty$/$u$/$\|f\|_\infty$ as subsolution/solution/supersolution.
\end{proof}

\begin{proposition}\label{lem:limit.existence}
    Assume \hyp{M}, \hyp{J1'}, \hyp{J2'}, \hyp{J2}, and
    \hyp{F5'}. Then there exists a unique viscosity
    solution $u\in C_b(\R^N)$ of \eqref{eq:main.nonlocal}. 
\end{proposition}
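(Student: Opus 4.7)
The plan is to construct the solution as a quadruple half-relaxed limit of the classical solutions $u_{R,M,k,\varepsilon}\in C^2(\overline B_R)$ of the approximate problem \eqref{approx.pb}, which exist by Proposition~\ref{lem:existence.approx.pb} and satisfy the uniform bound $\|u_{R,M,k,\varepsilon}\|_\infty\leq\|f\|_\infty$ by Corollary~\ref{cor:bounds}. The four approximations will be removed in the order $\varepsilon\to0$, $M\to\infty$, $k\to\infty$, $R\to\infty$. At each stage, the uniform $L^\infty$ bound yields bounded upper and lower half-relaxed limits; a (variant of the) stability Lemma~\ref{lem:stab} shows they are, respectively, a subsolution and a supersolution of the next equation in the cascade; and a comparison principle collapses them to a unique continuous solution, automatically giving locally uniform convergence.

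Steps~1--3 take place on the fixed ball $B_R$. In Step~1 (vanishing viscosity) the term $-\varepsilon\Delta u$ is stable under half-relaxed limits by the classical trick at the maximum point of $u-\phi$. In Step~2 (removal of the truncation), for every bounded $C^2$ test function $\phi$ the values of $L_k^R[\phi,D\phi]$ together with the non-singular pieces evaluated against the uniformly bounded solutions are bounded by a constant depending only on $\phi,k,R,\|f\|_\infty$---by \hyp{J1'}, \hyp{M}, the compact support of $\mu_{1,k}$ and the finiteness of $\mu_{2,k}$---so $T_M$ becomes inactive once $M$ is large, and the half-relaxed limits solve the untruncated problem. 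In Step~3, Lemma~\ref{lem:measure}(b)--(d) supplies the pointwise convergence $L_k^R[\phi,D\phi](x_k)\to L^R[\phi,D\phi](x)$ whenever $x_k\to x$, so Lemma~\ref{lem:stab} applies. All three steps rely on a comparison principle on $B_R$ with the projected operator and the Dirichlet datum $u=0$ on $\partial B_R$; this is obtained by reworking the doubling-of-variables argument of Theorem~\ref{thm:comp}(a) on a bounded domain: the outer penalization $\psi(\cdot/R)$ is replaced by the boundary condition, and because $P_R$ is $1$-Lipschitz the crucial bounds on $j_{\overline x}-j_{\overline y}$ survive unchanged. The output is a unique $u_R\in C(\overline B_R)$ with $\|u_R\|_\infty\leq\|f\|_\infty$ solving $u-L^R[u,Du]=f$ in $B_R$ with $u=0$ on $\partial B_R$.

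Step~4 passes to the whole space. Set $\overline u(x):=\limsup_{R\to\infty,\,y\to x}u_R(y)$ and $\underline u(x):=\liminf_{R\to\infty,\,y\to x}u_R(y)$, both bounded by $\|f\|_\infty$. For a bounded $\phi\in C^2$ and fixed $x\in\R^N$, I would split the operator into $\{|z|<\delta\}$ and $\{|z|\geq\delta\}$. On $\{|z|<\delta\}$, assumption \hyp{J1'} gives $|j_1(D\phi(x),z)|\leq C|z|$, so $x+j_1\in B_R$ and $P_R$ is inactive once $R$ is large, while the integrand is controlled by $\tfrac12\|D^2\phi\|_\infty|j_1|^2$, $\mu_1$-integrable by \hyp{J1}. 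On $\{|z|\geq\delta\}$ the measures $\mu_1$ and $\mu_2$ are finite (by \hyp{M} and Chebyshev for $\mu_1$), $|j_1|$ is $\mu_1$-integrable by Cauchy--Schwarz with \hyp{J1}, and the integrand is bounded uniformly in $R$; pointwise convergence $P_R(x+j)\to x+j$ and dominated convergence then yield $L^R[\phi,D\phi](x_R)\to L[\phi,D\phi](x)$ whenever $x_R\to x$. Lemma~\ref{lem:stab} therefore identifies $\overline u$ and $\underline u$ as a bounded usc subsolution and a bounded lsc supersolution of \eqref{eq:main.nonlocal} on $\R^N$, and Theorem~\ref{thm:comp}(a) forces $\overline u\leq\underline u$; equality produces locally uniform convergence of $u_R$ to a bounded continuous solution $u$, and uniqueness is Corollary~\ref{cor:main}(a).

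The hard part is that because $Du$ enters \emph{inside} the jumps $j_i$, no compactness is available for the gradient of the approximating solutions, so the scheme cannot be closed by extracting a convergent subsequence and passing to the limit in the equation in any classical norm. The whole argument must go through viscosity stability, which in turn requires a strong comparison principle for each of the four intermediate equations. The only genuinely new input is comparison on $B_R$ with the Neumann-type projection $P_R$ appearing in the nonlocal operator (needed for Steps~1--3); the four stability limits themselves then fit into the abstract framework of Lemma~\ref{lem:stab}.
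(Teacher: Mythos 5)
Your overall strategy---half-relaxed limits plus comparison for the limit equation---is the right one, but the key structural decision in the paper's proof is exactly the opposite of yours, and that decision is what makes the argument go through without extra work. The paper ties the three parameters together as $k=M=1/\eps=n$ and sends $R\to\infty$ \emph{first}, while the viscosity term $\tfrac1n\Delta$, the truncation $T_n$, and the mollified measures $\mu_{i,n}$ are all still present. After that single limit the intermediate equation $u-\tfrac1n\Delta u-T_n[L_n[u,Du]]=f$ is posed on all of $\R^N$, so there is no boundary and in particular no bounded-domain comparison principle to prove. Crucially, the paper does \emph{not} collapse $\overline u_n$ and $\underline u_n$ at this stage: it keeps the pair of semicontinuous sub/super solutions, takes a second half-relaxed limit $n\to\infty$, and only then invokes comparison---once, via Theorem~\ref{thm:comp}(a), for the target equation on $\R^N$, which is already covered by the hypotheses.

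Your plan instead collapses the half-relaxed limits at every stage, which requires a strong comparison principle for each intermediate equation, and for three of the four stages these are Dirichlet problems on $B_R$ with the projected nonlocal operator. That is a genuine gap. The paper's Lemma~\ref{lem:comp.approx.pb} is a comparison result only for \emph{classical} $C^2(\overline B_R)$ solutions, proved by evaluating the equation at an interior maximum of $u-v$; it does not extend for free to semicontinuous viscosity sub/super solutions. After your Step~1 ($\eps\to0$ with $R,M,k$ fixed) the equation $u-T_M[L_k^R[u,Du]]=f$ in $B_R$, $u=0$ on $\partial B_R$ is degenerate, and comparison for viscosity solutions of a degenerate Dirichlet problem (with a Neumann-type nonlocal term coming from $P_R$ competing with a Dirichlet boundary condition) is a nontrivial statement that you assert can be obtained "by reworking'' Theorem~\ref{thm:comp}(a) but do not establish. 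Whether or not such a result is ultimately provable, it is extra work that the paper deliberately avoids by choosing the order $R\to\infty$ first, so that all intermediate equations sit on $\R^N$ and only the target comparison principle is ever used.
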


\begin{proof}
    Let $R>0$, $k=M=1/\eps=n\in\N$, and $u_{R,n}$ be the corresponding
    solution of \eqref{approx.pb} given by Proposition
    \ref{lem:existence.approx.pb}. Using the ``half relaxed limit''
    method, we first we send $R\to\infty$ and then
    $n\to\infty$ and show that we can obtain from $u_{R,n}$ a function $u$
    which is the viscosity solution of \eqref{eq:main.nonlocal}.

\noindent {\bf 1)} {\em Claim:} For every $n\in\N$, the functions
    $$\overline{u}_n(x):=\limsup_{y\to x,R\to\infty}
    u_{R,n}(y)\,\qquad\text{and}\qquad
    \underline{u}_n(x):=\liminf_{y\to x,R\to\infty} u_{R,n}(y)\,$$
are bounded viscosity sub- and supersolutions respectively of
\begin{align}
\label{intermediate}
u-\tfrac1n\Delta
  u-T_n\Big[L_n[u,Du]\Big]=f\qquad\text{in}\qquad\R^N.
\end{align}
where $L_{k}$ is defined as $L$ in \eqref{I-def}, but with the measure
$\mu_{i,k}$ replacing $\mu_i$ for $i=1,2$. Moreover,
$$-\|f\|_\infty\leq \ul u_n(x)\leq\ol u_n(x)\leq \|f\|_\infty\qquad\text{in}\qquad\R^N.$$

\noindent {\em Proof of Claim:} First note that $\ol u_n$ and $\ul
u_n$ are defined for every $x\in\R^N$ since $R\to\infty$, they are
semicontinuous and $\ul
u_n\leq\ol u_n$ by definition and bounded by $\|f\|_\infty$ by
Corollary \ref{cor:bounds}.  We show that $\ol u$ is a subsolution of
\eqref{intermediate}  according to Definition
\ref{ineq:subsol.nolocal_2}. Take any bounded test function
$\phi$ and any point $x\in\R^N$ such that $\overline{u}-\phi$ has a
global maximum at $x$. We may as usual assume the maximum is
strict. Then there exists a sequence $y_R\to x$ 
    of maximum points of $u_{R,n}-\phi$ in $\overline{B}_R$, such that
    $u_{R,n}(y_R)\to\overline{u}(x)$. Take $R$ big enough such that
    $R>|x|$ and $y_R\in B_R$ (since $y_R\to x$, $y_R$ cannot be located on
    $\partial B_R$ for $R$ big enough).
    Since $y_R$ is a maximum point and $u_{R,n}$ is smooth, $Du_{R,n}(y_R)=D\phi(y_R)$, $\Delta
u_{R,n}(y_R)\leq \Delta \phi(y_R)$, and (cf. the proof of Lemma
\ref{lem:comp.approx.pb}) 
    $$L_n^R[u_{R,n},Du_{R,n}](y_R)\leq L_n^R[\phi,D\phi](y_R)\,.$$
Since $u_{R,n}$ satisfies equation \eqref{approx.pb} at the point $y_R$, it then
follows that
\begin{align}
\label{int_ineq}
u_{R,n}(y_R)-\tfrac1n\Delta\phi(y_R)-T_n\Big[L_n^R[\phi,D\phi](y_R)\Big]\leq
    f(y_R)\,.
\end{align}
By the boundedness of $y_R$, the regularity of $\phi$, \hyp{J1'}, and
the definition of $P_R$,  
$$\phi\Big(P_R\big(y_R+j_{i}(D\phi(y_R),z)\big)\Big) \to
\phi\big(x+j_{i}(D\phi(x),z)\big) \quad\text{as}\quad
R\to\infty\,,\quad\text{for}\quad a.e.\  z \text{ and } i=1,2.$$
Hence, since this term is uniformly bounded, $\mu_{i,k}$ is bounded, and
\hyp{M} holds, we can use the dominated convergence theorem to
conclude that
$$L_n^R[\phi,D\phi](y_R)\to
L_n[\phi,D\phi](x)\qquad\text{as}\qquad R\to\infty. $$ 
By the regularity of $\phi$ and the continuity of $T_n$, we can then
pass to the limit as $R\to\infty$ in \eqref{int_ineq} and find that
    $$\ol u(x)-\tfrac1n\Delta\phi(x)-T_n\Big[L_n[\phi,D\phi](x)\Big]\leq
    f(x)\,.$$
We conclude that $\ol u$ is a viscosity subsolution of
\eqref{intermediate}, and in a similar way we can show that $\ul u$ is viscosity
supersolution of \eqref{intermediate}. The claim is proved.
\smallskip

\noindent {\bf 2)}\quad We now pass to the limit as $n\to\infty$. We
proceed as before, defining
    $$\overline{u}(x):=\limsup_{y\to x,n\to\infty}
    \ol u_{n}(y)\,\qquad\text{and}\qquad
    \underline{u}(x):=\liminf_{y\to x,n\to\infty} \ul u_{n}(y)\,,$$
where $\ol u_n$ and $\ul u_n$ are the uniformly bounded sub and supersolutions
of \eqref{intermediate} given by part 1). It immediately follows that
$-\|f\|_\infty\leq \ul u\leq \ol u\leq \|f\|_\infty$.

We prove that $\ol u$ is
viscosity subsolution of \eqref{eq:main.nonlocal}. Again we take any
smooth bounded test function $\phi$ and global strict maximum point $x$ of
$\overline{u}-\phi$ in $\R^N$. We have a sequence $y_n\to x$ of
maximum points of $u_n-\phi$ such that $u_n(y_n)\to\overline{u}(x)$.
    Fix $\delta>0$ and choose $n$ big enough so that
    $1/n<\delta$. We split $L_{1,n}$ into
    $L_{1,n}=L_{\delta,n}+L^\delta_{n}$, where $L_{\delta}$ and
    $L^\delta$ are defined in the beginning of section
    \ref{sect:visco}. Since $\ol u$ is a subsolution of
    \eqref{intermediate}, it follows that
\begin{align}
\label{int_ineq2} u_n(y_n)-\tfrac1n\Delta\phi(y_n)-T_n\Big[L_n[\phi,D\phi](y_n)\Big]\leq
    f(y_n)\,.\end{align}

We pass to the limit in the different terms of this inequality. First,
let 
$$\psi_n(z)=\phi\big(y_n+j_{1}(D\phi(y_n),z)\big) - \phi(y_n)- D\phi(y_n)\cdot 
    j_{1}\big(D\phi(y_n),z\big)\ind{|z|<1}.$$
Since $D^2\phi$ is continuous, $y_n\to x$ and $D\phi(y_n)\to D\phi(x)$
are bounded, a Taylor expansion and \hyp{J1'} reveals that there is a
$C$ independent of $n$ and $z$ such that
$$|\psi_n(z)|\leq
\sup_{t\in(0,1)}\tfrac12\Big|D^2\phi\Big(y_n+tj_{1}\big(D\phi(y_n), z\big)\Big)\Big|\big|j_{1}\big(D\phi(y_n),z\big)\big|^2\leq
C|z|^2\quad\text{for}\quad |z|<1.$$
    By regularity of $\phi$ and \hyp{J2'}, the $\psi_n$ converge
    uniformly for $|z|\leq \delta$ to  
    $$
        \psi(z):=\phi\big(x+j_{1}(D\phi(x),z)\big) - \phi(x)-
    D\phi(x)\cdot j_{1}\big(D\phi(x),z\big)\ind{|z|<1}\,,$$
and hence we use Lemma \ref{lem:measure} $(c)$ to conclude that
 $$L_{\delta,n}[\phi,D\phi](y_n)\to L_\delta[\phi,D\phi](x).$$
Simpler but similar arguments, this time using Lemma \ref{lem:measure}
$(b)$ and $(d)$, show that 
$$L^\delta_{n}[\phi,D\phi](y_n)\to L^\delta[\phi,D\phi](x) \quad\text{and}\quad
L_{2,n}[\phi,D\phi](y_n)\to L_2[\phi,D\phi](x).$$
We conclude that $L_n[\phi,D\phi](y_n)\to L[\phi,D\phi](x)$, and since
this limit is bounded, for $n$ big enough
$$T_{n}[L_n[\phi,D\phi](y_n)]=L_n[\phi,D\phi](y_n)\to
L[\phi,D\phi](x)\qquad\text{as}\qquad n\to\infty.$$ 
In view of the regularity of $\phi$ and $f$, we can then pass to the
limit as $n\to\infty$ in \eqref{int_ineq2} to find that
  $$\overline{u}(x)-L[\phi,D\phi](x)\leq f(x)\,.$$
Hence $\overline{u}$ is a viscosity subsolution of \eqref{eq:main.nonlocal}.
    Similar arguments show that $\underline{u}$ is viscosity supersolution.
\smallskip

\noindent {\bf 3)}\quad   By the comparison result for
    semicontinuous viscosity solutions, Theorem~\ref{thm:comp} $(a)$,
    it follows that $\overline{u}\leq \underline{u}$. Note that this
    result requires \hyp{J2}, see also Remark \ref{rem:newass}. Since the
    opposite inequality holds by part 2),
    $\overline{u}=\underline{u}=:u$, and this function is continuous,
    bounded by $\|f\|_\infty$, and a viscosity solution of
    \eqref{eq:main.nonlocal}. The proof is complete.
\end{proof}

Now we show how to remove assumptions \hyp{J1'}, \hyp{J2'}, and \hyp{F5'} and
obtain an existence result in the general case. We do it in two steps,
starting by removing \hyp{J2'} and \hyp{F5'} by a regularization
argument (mollification), and then we remove \hyp{J1'} by a truncation argument.
We will need the following lemma, whose proof is given in Appendix
\ref{sec:pf2}. 

\begin{lemma}\label{lem:reg.j} Assume \hyp{M}, \hyp{J1'}, and \hyp{J2}
  and define $j_{1,k}$ and $j_{2,k}$ by
$$j_{i,k}(p,z):=\big(\rho_k\ast
j_{i}(\cdot,z)\big)(p)=\int_{\R^N}\rho_k(q-p)j_i(q,z)\,\d
    q\qquad\text{for}\qquad i=1,2,$$
for a mollifier $\rho_k(p)=k^N\rho(kp)$, $0\leq \rho\in
C^{\infty}(\R^N)$ is symmetric with support in $B_1$ and $\int\rho
=1$. Then the following holds: 

\noindent $(a)$ For any $k\in\N$, $j_{1,k}$ and $j_{2,k}$ are locally bounded,
and for any $|p|\leq r$, $|z|<1$, and $k\in\N$, 
$$|j_{1,k}(p,z)|\leq C_{r+1/k}|z|,\quad\text{where}\quad C_r \quad\text{is
    the constant from \hyp{J1'}}.$$

\noindent $(b)$ For every $k\in\N$ and $r>0$, there is $C_{k,r}$ such that for all
$|p|,|q|,|z|\leq r$,
$$|j_{i,k}(p,z)-j_{i,k}(q,z)|\leq
C_{k,r}|p-q|\qquad\text{for}\qquad i=1,2. $$

\noindent $(c)$ For any $r>0$, $|p|,|q|<r$ and $k\in\N$, 
    $$\int_{|z|>0}|j_{1,k}(p,z)-j_{1,k}(q,z)|^2\dmu_1(z)\leq 
    \omega_{j,r+\frac1k}(p-q)\quad\text{where}\quad \omega_{j,r} \quad\text{is
    the modulus from \hyp{J2}}\,.$$

\noindent $(d)$ There exists $\delta_0>0$ such that for any
    $r>0$ and $\eps>0$ there exists $\eta>0$ such that
    $$\sup_{|p|<r,k\in\N}\int_A|j_{1,k}(p,z)|^2\,\d\mu_1(z)<\eps\,$$
 for every Borel set $A\subset \{0<|z|<\delta_0\}$ such that
 $\int_A|z|^2\dmu_1(z)<\eta$.  

\noindent $(e)$ Let $L_{1,k}$ and $L_{2,k}$ be defined as in \eqref{I-def} and \eqref{I-def2}
with $j_1$ and $j_2$ replaced by $j_{1,k}$ and $j_{2,k}$.
For all $\phi\in C_b\cap C^2$ and $x_k\to x$,
$$L_{i,k}[\phi,D\phi(x_k)](x_k)\to L_i[\phi,D\phi(x)](x)\qquad \text{for}\qquad i=1,2.$$

\end{lemma}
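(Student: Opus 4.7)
The starting point for all five items is the change of variables
$$j_{i,k}(p,z) = \int_{\R^N} \rho_k(q)\, j_i(p-q,z)\,\d q,$$
so that for $|p|\leq r$ the convolution only samples $j_i(q',z)$ at $|q'|\leq r+\tfrac 1 k$ (since $\mathrm{supp}\,\rho_k\subset B_{1/k}$). Combined with Jensen's inequality, Fubini, and the local continuity/boundedness in \hyp{J1'}, this gives the four pointwise/integral estimates in (a)--(d) by routine arguments, and the mollification identity together with the continuity of $j_i(\cdot,z)$ is what powers the convergence in (e).

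For (a) I would apply \hyp{J1'} inside the integral, $|j_1(p-q,z)|\leq C_{r+1/k}|z|$, and integrate against $\rho_k$. For (b) I differentiate under the integral sign: $D_p j_{i,k}(p,z) = \int D\rho_k(q)\, j_i(p-q,z)\,\d q$; since $D\rho_k$ has compact support and $j_i$ is locally bounded by \hyp{J1'}, this gradient is bounded on compacts (with a $k$-dependent constant), which gives the Lipschitz estimate. For (c), Jensen gives
$$|j_{1,k}(p,z)-j_{1,k}(q,z)|^2\leq \int_{\R^N} \rho_k(s)\,|j_1(p-s,z)-j_1(q-s,z)|^2\,\d s,$$
and integrating in $z$ against $\mu_1$, swapping via Fubini, and applying \hyp{J2} at radius $r+\tfrac 1 k$ to the inner integral yields the bound with modulus $\omega_{j,r+1/k}$. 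For (d) the same Jensen--Fubini manoeuvre reduces $\int_A|j_{1,k}(p,z)|^2\,\d\mu_1$ to $\int\rho_k(s)\int_A|j_1(p-s,z)|^2\,\d\mu_1\,\d s$; since $A\subset\{|z|<\delta_0\}$ and $|p-s|\leq r+1$, \hyp{J1'} bounds the inner integral by $C_{r+1}^2\int_A|z|^2\,\d\mu_1<C_{r+1}^2\eta$, so $\eta<\eps/C_{r+1}^2$ works uniformly in $k\geq 1$.

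The main item is (e). Fix a bounded $\phi\in C^2$, $x_k\to x$, and set $p_k:=D\phi(x_k)$, $p:=D\phi(x)$; by continuity of $D\phi$, $(p_k)$ is bounded, say by some $r$. Since $j_i(\cdot,z)$ is continuous at $p$ for a.e.\ $z$ by \hyp{J1'} and $\sup_{q\in\mathrm{supp}\,\rho_k}|(p_k-q)-p|\to 0$, I obtain the pointwise convergence $j_{i,k}(p_k,z)\to j_i(p,z)$ for a.e.\ $z$, hence also pointwise a.e.\ convergence of the $L_{1,k}$- and $L_{2,k}$-integrands to those of $L_1,L_2$ at $(p,x)$. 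I then pass to the limit by dominated convergence: for $L_2$ the integrand is bounded by $2\|\phi\|_\infty$ and $\mu_2$ is finite, so this case is immediate. For $L_1$ I split into $\{|z|<\delta_0\}$ and $\{|z|\geq\delta_0\}$; near zero, a Taylor expansion of $\phi$ combined with part (a) gives the uniform quadratic bound
$$\bigl|\phi(x_k+j_{1,k}(p_k,z))-\phi(x_k)-j_{1,k}(p_k,z)\cdot p_k\bigr|\leq \tfrac 1 2 \|D^2\phi\|_\infty C_{r+1}^2|z|^2,$$
which is $\mu_1$-integrable by \hyp{M}; away from zero, $\mu_1$ is finite, the difference of $\phi$-values is bounded by $2\|\phi\|_\infty$, and the compensator $j_{1,k}(p_k,z)\cdot p_k$ is dominated via Cauchy--Schwarz using the $k$-uniform $L^2(\mu_1)$-bound on $j_{1,k}(p_k,\cdot)$ coming from (c) with $q=0$. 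The main obstacle is precisely this uniform-in-$k$ control of the compensator near $z=0$: the cancellation in $\phi(x_k+j)-\phi(x_k)-j\cdot p_k$ must be preserved throughout the limit, and the linear-in-$|z|$ bound from (a), valid uniformly for $k\geq 1$, is what keeps it intact.
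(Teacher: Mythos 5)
Your proposal follows the paper's own proof at essentially every step: parts (a)--(d) are the same mollification/Jensen/Fubini/radius-shift arguments (with a small variant in (b), differentiating under the integral rather than using $\|\rho_k(p-\cdot)-\rho_k(q-\cdot)\|_{L^1}\leq\|D\rho_k\|_{L^1}|p-q|$, and in (d) you bound the inner integral directly by \hyp{J1'} rather than routing through \hyp{J3}; both variants are fine, and the latter is even a bit more self-contained since \hyp{J3} is not among the lemma's stated hypotheses), and part (e) takes the same route via a.e.\ pointwise convergence of $j_{i,k}(p_k,z)$, the $\{|z|<\delta\}$ / $\{|z|\geq\delta\}$ split, Taylor on the small-$z$ part, and dominated convergence for the rest.

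The one genuine slip is in (e), in the control of the compensator $j_{1,k}(p_k,z)\cdot p_k$ over $\{|z|\geq\delta_0\}$. You assert a $k$-uniform $L^2(\mu_1)$-bound on $j_{1,k}(p_k,\cdot)$ ``coming from (c) with $q=0$''. But (c) only gives $\int_{|z|>0}|j_{1,k}(p,z)-j_{1,k}(0,z)|^2\dmu_1(z)\leq\omega_{j,r+1/k}(p)$, i.e.\ a bound on the \emph{difference}; it says nothing about $\int_{|z|>0}|j_{1,k}(p,z)|^2\dmu_1(z)$ itself unless you also control $\int_{|z|>0}|j_{1,k}(0,z)|^2\dmu_1(z)$, which none of (a)--(d) provides. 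The bound you actually want is Jensen applied to \hyp{J1} (which, per Remark~\ref{rem:newass}, follows from \hyp{J1'} and \hyp{M}): $\int_{|z|>0}|j_{1,k}(p,z)|^2\dmu_1(z)\leq\int_{\R^N}\rho_k(q)\int_{|z|>0}|j_1(p-q,z)|^2\dmu_1(z)\,\d q\leq C_{j,\,r+1}$, uniformly in $k$ and $|p|\leq r$. Replace the reference to (c) with this estimate and the Cauchy--Schwarz / equi-integrability step closes; in fact your treatment of the compensator is then more explicit than the paper's, which only asserts that the $\{|z|\geq\delta\}$ part of $L_{1,k}$ is ``more or less the same'' as the $L_{2,k}$ case.
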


Here is the general existence result for equation \eqref{eq:main.nonlocal}.

\begin{proposition}\label{prop:ex}
    Assume \hyp{M}, \hyp{J1}--\hyp{J3} and \hyp{F1}--\hyp{F5}. 
    Then there exists a viscosity solution of \eqref{eq:main.nonlocal}.
\end{proposition}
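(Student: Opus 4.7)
The plan is to reduce to Proposition~\ref{lem:limit.existence}, whose hypotheses include the stronger assumptions \hyp{J1'}, \hyp{J2'}, \hyp{F5'}. I proceed in two approximation stages: first remove \hyp{J2'} and \hyp{F5'} by mollification, then remove \hyp{J1'} by truncation. In each stage, approximate solutions are constructed, and limits are identified by combining the half-relaxed limit method (Lemma~\ref{lem:stab}) with the comparison principle of Theorem~\ref{thm:comp}$(a)$ (whose hypotheses are only \hyp{M}, \hyp{J1}--\hyp{J2}, \hyp{F4}).

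\emph{Stage 1.} Assume provisionally that \hyp{J1'} also holds. Mollify $j_1,j_2$ in the $p$-variable via Lemma~\ref{lem:reg.j}, producing $j_{i,k}$ that satisfy \hyp{J2'} while preserving \hyp{J1'}, \hyp{J2}, \hyp{J3} uniformly in $k$. Mollify $f$ by standard convolution to obtain Lipschitz $f_k$ with $\|f_k\|_\infty \leq \|f\|_\infty$ and $f_k \to f$ uniformly (using \hyp{F5}). Proposition~\ref{lem:limit.existence} then yields a viscosity solution $u_k \in C_b(\R^N)$ of the approximate equation, with $\|u_k\|_\infty \leq \|f\|_\infty$. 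Set $\bar u := \limsup^* u_k$ and $\underline u := \liminf_* u_k$. By Lemma~\ref{lem:reg.j}$(e)$, $L_k[\phi,D\phi](x_k) \to L[\phi,D\phi](x)$ for every bounded $\phi\in C^2$ and every $x_k \to x$, so Lemma~\ref{lem:stab} shows that $\bar u$ is a subsolution and $\underline u$ a supersolution of \eqref{eq:main.nonlocal}. Theorem~\ref{thm:comp}$(a)$ forces $\bar u \leq \underline u$, while the reverse inequality is automatic, so $\bar u = \underline u =: u$ is a continuous bounded viscosity solution of the equation with the original data (still under the provisional \hyp{J1'}).

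\emph{Stage 2.} Now assume only \hyp{J1}--\hyp{J3} and \hyp{F1}--\hyp{F5}. For $\ell \in \N$, define the radial truncations
\[
j_{1,\ell}(p,z) := j_1(p,z)\min\!\left(1,\tfrac{\ell|z|}{|j_1(p,z)|}\right), \qquad
j_{2,\ell}(p,z) := j_2(p,z)\min\!\left(1,\tfrac{\ell}{|j_2(p,z)|}\right),
\]
extended by $0$ where $j_i = 0$. These are compositions of $j_i$ with the nearest-point projection onto the convex balls $B_{\ell|z|}$ and $B_\ell$ respectively, hence are nonexpansive in the argument: $|j_{i,\ell}(p,z) - j_{i,\ell}(q,z)| \leq |j_i(p,z) - j_i(q,z)|$ and $|j_{i,\ell}| \leq |j_i|$. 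Consequently \hyp{J1}, \hyp{J2}, \hyp{J3} are all preserved with identical constants and moduli, while by construction $|j_{1,\ell}(p,z)|\leq \ell |z|$ and $|j_{2,\ell}|\leq \ell$, so \hyp{J1'} holds (with $C_r = \ell$). Stage~1 therefore produces $u_\ell \in C_b(\R^N)$ with $\|u_\ell\|_\infty \leq \|f\|_\infty$. Setting $\bar u := \limsup^* u_\ell$ and $\underline u := \liminf_* u_\ell$, for any bounded $\phi\in C^2$ with bounded Hessian and any $x_\ell \to x$, Taylor's theorem together with $|j_{1,\ell}|\leq|j_1|$ yields the dominant
\[
\bigl|\phi(x_\ell+j_{1,\ell})-\phi(x_\ell)-D\phi(x_\ell)\cdot j_{1,\ell}\bigr|
\leq \tfrac12\|D^2\phi\|_\infty |j_{1,\ell}|^2 \leq C|j_1|^2,
\]
which is $\mu_1$-integrable by \hyp{J1}. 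Since $j_{1,\ell} \to j_1$ pointwise, dominated convergence yields $L_{1,\ell}[\phi,D\phi](x_\ell) \to L_1[\phi,D\phi](x)$; an analogous and simpler argument using the finiteness of $\mu_2$ handles $L_{2,\ell}$. Lemma~\ref{lem:stab} and Theorem~\ref{thm:comp}$(a)$ give $\bar u = \underline u =: u \in C_b(\R^N)$, promoted to $BUC(\R^N)$ by Corollary~\ref{cor:main}$(b)$.

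The delicate point is the Stage~2 truncation, which must simultaneously preserve all of \hyp{J1}--\hyp{J3} with uniform constants (so that Stage~1 applies), provide an $\ell$-dependent pointwise bound realising \hyp{J1'}, and converge to $j_i$ in a mode strong enough that the nonlocal operators converge along arbitrary sequences $x_\ell\to x$ as required by Lemma~\ref{lem:stab}. The radial projection onto balls is the natural tool, because as a retraction onto a convex set it is nonexpansive with estimates independent of $(p,z)$, yielding both stability of the hypotheses and a uniform $|j_1|^2$-dominant for the dominated convergence argument at the limit.
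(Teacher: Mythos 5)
Your two-stage structure (mollify away \hyp{J2'} and \hyp{F5'} under a provisional \hyp{J1'}, then truncate away \hyp{J1'}) is exactly the paper's, the half-relaxed-limit-plus-comparison closure of each stage is the paper's, and your radial truncation $j_{1,\ell}(p,z)=j_1(p,z)\min(1,\ell|z|/|j_1(p,z)|)$ is literally the paper's $j_1^M(p,z)=|z|\,T_M(j_1(p,z)/|z|)$. Stage~1 is fine. Your additional truncation of $j_2$ is a sensible supplement, since \hyp{J1'} does require $j_2$ locally bounded and the paper is silent on this; the nearest-point-projection observation making the truncations nonexpansive is also correct and mirrors the paper's remarks.

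There is, however, a genuine gap in the passage to the limit in Stage~2. You invoke the dominated convergence theorem with the ``dominant'' $C\,|j_1(D\phi(x_\ell),z)|^2$, declaring it $\mu_1$-integrable ``by \hyp{J1}''. But this function depends on $\ell$ through the $p$-argument $D\phi(x_\ell)$, so it is \emph{not} a single fixed $\mu_1$-integrable majorant; \hyp{J1} only tells you that each member of the family is integrable, with a uniform bound on the integrals, not that a common dominant exists. In general none does (think of $j_1(p,z)$ whose $z$-concentration profile shifts with $p$). This is precisely the reason \hyp{J3} is in the hypotheses: it guarantees that $\{|j_1(p,\cdot)|^2\}_{|p|<r}$ — and hence $\{|j_{1,\ell}(D\phi(x_\ell),\cdot)|^2\}_\ell$, since $|j_{1,\ell}|\le|j_1|$ and $D\phi(x_\ell)$ stays in a ball — is $|z|^2\mu_1$-equi-integrable on $\{|z|<\delta_0\}$. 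The correct tool is then Vitali's convergence theorem (Appendix~\ref{app:equiint}), applied on $\{|z|<\delta\}$, together with ordinary dominated convergence on $\{|z|\ge\delta\}$ where $\mu_1$ is finite and $\phi$ bounded. You observed that \hyp{J3} survives the truncation, but then did not actually use it; replacing ``dominated convergence'' by ``Vitali's theorem via \hyp{J3}'' closes the gap and recovers the paper's argument. (Note also that Remark~\ref{rem:assump}$(ii)$ explicitly flags \hyp{J3} as being needed for existence but not comparison — it is load-bearing precisely here.)
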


\begin{proof} We proceed in two steps.
    
    \noindent\textbf{1)} Assume in addition \hyp{J1'} holds. We
    approximate $j_i$ and $f$ by 
 $$j_{i,k} \quad\text{as defined in Lemma
   \ref{lem:reg.j}}\qquad\text{and}\qquad f_k=\rho_k\ast f,$$ 
where $k\in\N$ and $\rho_k$ is the mollifier defined in Lemma
\ref{lem:reg.j}. By Lemma \ref{lem:reg.j}, $j_{i,k}$
    satisfy \hyp{J1}--\hyp{J3} and \hyp{J1'}--\hyp{J2'}, while assumption
 \hyp{F5} and properties of mollifiers imply that $f_k$ is bounded and
 Lipschitz continuous in $\R^N$ (so \hyp{F5'} holds) and converges uniformly to $f$ in $\R^N$ as $k\to\infty$.

Let $L_{k}$ be defined as $L$ with $j_{i,k}$ replacing $j_i$, and
consider the problem
    $$u-L_{k}[u,Du]=f_{k}\qquad\text{in}\qquad \R^N.$$
By the above discussion, this problem satisfies all the assumptions of 
Proposition \ref{lem:limit.existence} for every $k\in\N$, and hence
 there exist solutions $u_k$ of this problem for every $k\in\N$. By
 the comparison principle (Theorem \ref{thm:comp} $(a)$), 
$$\|u_k\|_\infty\leq\|f_{k}\|_\infty\leq\|f\|_\infty.$$
As in the proof of Proposition~\ref{lem:limit.existence}, we then define the
half-relaxed limits
$$\overline{u}(x)=\limsup_{y\to
  x,k\to\infty}u_k(y)\qquad\text{and}\qquad \underline{u}(x)=\liminf_{y\to x,k\to\infty}u_k(y),$$
prove that they are viscosity sub and supersolutions of
\eqref{eq:main.nonlocal}, and conclude by the comparison result
Theorem \ref{thm:comp} $(a)$ that $\ol u=\ul u=u$ is a viscosity solution of
\eqref{eq:main.nonlocal}. We just sketch the argument for $\ol u$
being a subsolution. Take a smooth test-function $\phi$ such that
$\overline{u}-\phi$ has a strict maximum at $x\in\R^N$. Then there
exists a sequence $y_k\to x$ such 
    that $u_k -\phi$ has a maximum at $x_k$. Using the subsolution property for
    $u_k$, the fact that $L_{k}[\phi,D\phi](x_k)\to L[\phi,D\phi](x)$
    (Lemma~\ref{lem:reg.j} $(e)$) and that $f_{k}(x_k)\to f(x)$, we
    arrive at 
    $$\overline{u}(x)-L[\phi,D\phi](x)\leq f(x)\,,$$
    hence $\overline{u}$ is a
    subsolution of \eqref{eq:main.nonlocal}. This completes the
    existence proof under the assumptions \hyp{M}, \hyp{J1'}, \hyp{J2},
    \hyp{J3}, and \hyp{F5}.
\smallskip

\noindent\textbf{2)} 
We remove the \hyp{J1'} condition through a truncation procedure for
the $j_1$-term: 
$$j_{1}^M(p,z):=|z|\,T_M\Big(\frac{j_1(p,z)}{|z|}\Big)\,,$$
where $T_M(x)=x$ if $|x|\leq M$ and $T_M(x)=M\frac x{|x|}$ if
$|x|>M$. In this case is easy to see that $j_{1}^M$ satisfies
\hyp{J1}--\hyp{J3} and in addition \hyp{J1'} with $C_r=M$.
Furthermore, for almost any $z$, 
$$j_{1}^M(p,z)\to j_1(p,z)\quad\text{locally uniformly in}\quad p.$$ 
Hence by part 1), for any $M>0$ there exists a solution $u_M$ of 
$$u-L^M[u,Du]=f\qquad\text{in}\qquad\R^N,$$
where $L^M$ is defined as $L$ (cf. \eqref{I-def}) but with
$(j_1^M,j_2)$ replacing $(j_1,j_2)$. As in part 1), we define the
half-relaxed limits 
$$\overline{u}(x)=\limsup_{y\to
  x,M\to\infty}u_M(y)\qquad\text{and}\qquad \underline{u}(x)=\liminf_{y\to x,M\to\infty}u_M(y),$$
prove that they are viscosity sub and supersolutions of
\eqref{eq:main.nonlocal}, and conclude by the comparison result
Theorem \ref{thm:comp} $(a)$ that $\ol u=\ul u=u$ is a viscosity solution of
\eqref{eq:main.nonlocal}. In view of previous arguments, the only thing we need to check is that
$$L_1^M[\phi,D\phi](x_M)\to L_1[\phi,D\phi](x)\quad\text{when}\quad
x_M\to x.$$

To do so, we use note that the integrand converges pointwise a.e. by
the regularity of $\phi$ and the local uniform convergence of $j_1^M$
in the $p$-variable. Since $|j_1^M(p,z)|\leq|j_1(p,z)|$, $j_1^M$
satisfies the equi-integrability condition \hyp{J3} uniformly in $M$. Hence we may use Vitali's convergence theorem
(cf. Appendix \ref{app:equiint} and Remark \ref{rem:assump}-$(b)$) to
pass the limit 
$M\to\infty$ inside the integral over $|z|<\delta$. Passage to the
limit in the integral over $|z|\geq\delta$ is done using the
dominated convergence theorem since $\phi$ is bounded and $\mu_1$ is not
singular on this domain. 
The proof is complete.
\end{proof}

\begin{proof}[Proof of Theorem~\ref{thm:existence}] Uniqueness and
  uniform continuity follow from Corollary \ref{cor:main}. Existence of
  solutions of \eqref{eq:main.nonlocal} follows from
  Proposition~\ref{prop:ex}. Existence for \eqref{eq:0} follows in a
  similar way as for \eqref{eq:main.nonlocal} where all difficulties
  are already present. We only give a very brief sketch 
of the proof. We start by an approximate problem a la
\eqref{approx.pb}. In the general case it is to find $u\in C^2(\overline B_R)$ 
such that
\begin{equation*}\label{approx.pb2}
 \begin{cases}  \gamma u+T_M\Big[-\gamma u+
     F\big(u,Du,L^R_{k}[u,Du]\big)\Big]-\eps\Delta u=f(x)\,,&\quad
        x\in B_R(0)\,,\\
u=0,&  \quad       x\in \partial B_R(0)\,,
\end{cases}
\end{equation*}
where $L^R_{k}$ is defined below \eqref{approx.pb}.
Under the same assumptions as the linear case in addition to the
assumption that $F$ is also locally Lipschitz, we obtain existence of
solutions of this problem following step by step the fixed point
argument of the proof of Proposition \ref{lem:existence.approx.pb}. To
get an existence 
result for equation \eqref{eq:0}, we follow the approximation and limit
procedures given in the proofs of Proposition \ref{lem:limit.existence} and
\ref{prop:ex}. The
only slight difference is that we also need an approximation argument for $F$
(e.g. by mollification again), and when we pass to the limit, we use this time
the strong comparison result Theorem~\ref{thm:comp}-$(b)$ for the limit
equation. It is straight forward to check that this will work out and
produce a bounded viscosity solution of \eqref{eq:0}. 
\end{proof}

\subsection{Proof of  Theorem \ref{thm:localization} (local limits)}
\label{sect:localization}

Theorem \ref{thm:localization} is a consequence of the half-relaxed limit method and
comparison. Define 
\begin{align*}
\bar{u}(x):=\limsup_{\eps\to0, y\to x}u_\eps(y)\qquad\text{and}\qquad
        \underline{u}(x):=\liminf_{\eps\to0, y\to x}u_\eps(y)\,.
\end{align*}

In the quasilinear case we have the following result.
\begin{lemma}\label{lem:limit} Assume the assumptions of Theorem
    \ref{thm:localization}-$(a)$ hold.
	Then $\bar{u}$ is a viscosity subsolution of \eqref{eq:local} and 
        $\underline{u}$ is a viscosity supersolution of	\eqref{eq:local}.
\end{lemma}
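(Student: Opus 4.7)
The plan is to apply the half-relaxed limit method of Barles--Perthame. Since $\|u_\eps\|_\infty \leq \|f\|_\infty$ by Corollary \ref{cor:main}-$(c)$, the envelopes $\bar u$ and $\underline u$ are well-defined bounded usc/lsc functions with $\underline u \leq \bar u$. I sketch the argument for $\bar u$ being a subsolution of \eqref{eq:local}; the supersolution case for $\underline u$ is entirely analogous up to sign reversals.

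Fix a bounded $C^2$ test function $\phi$ and a point $x \in \R^N$ at which $\bar u - \phi$ attains a strict global maximum (perturbing by a $\psi$-term as in Remark \ref{rem:str_max} so that the maximum is attained in the interior of $\R^N$). By a classical argument (cf.\ \cite[Lemma V.1.6]{CB:Book}) there exists a sequence $x_\eps \to x$ of global maximum points of $u_\eps - \phi$ with $u_\eps(x_\eps) \to \bar u(x)$. Since $\phi$ is bounded and $u_\eps$ is a viscosity subsolution of \eqref{eq:lineps}, Definition~\ref{def2} yields
\begin{equation}\label{plan:visc}
-L_\eps[\phi, D\phi](x_\eps) + u_\eps(x_\eps) \leq f(x_\eps).
\end{equation}
The entire argument reduces to proving
\begin{equation}\label{plan:conv}
\lim_{\eps \to 0} L_\eps[\phi, D\phi](x_\eps) = L_0(D\phi(x), D^2\phi(x)),
\end{equation}
since one can then pass to the limit in \eqref{plan:visc} using continuity of $f$ to get the required viscosity inequality $-L_0(D\phi(x), D^2\phi(x)) + \bar u(x) \leq f(x)$.

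To establish \eqref{plan:conv} I fix $\delta>0$ small and split each $L_{i,\eps}[\phi,D\phi](x_\eps)$ into integrals over $\{|z|<\delta\}$ and $\{|z|\geq\delta\}$. The tails vanish as $\eps\to 0$: for $L_{2,\eps}$ by boundedness of $\phi$ combined with $\mu_{2,\eps}(\{|z|\geq\delta\})\to 0$ from the third limit in \hyp{M$_\eps$}; for $L_{1,\eps}$ similarly, controlling the $j_1\!\cdot\! D\phi$ part via Cauchy--Schwarz with the bound $\int|j_1(D\phi(x_\eps),z)|^2\,d\mu_{1,\eps}\leq C$ from \hyp{J1}. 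On the inner region, Taylor-expanding $\phi$ to second order at $x_\eps$ and substituting $j_i(p,z)=\sigma_i(p)z+o(|z|)$ as $z\to 0$ locally uniformly in $p$ (from \hyp{J4}) rewrites the integrands as a quadratic form in $z$ (for $L_1$) or a linear-plus-quadratic form (for $L_2$), with coefficients $\sigma_i(D\phi(x_\eps))^T D^2\phi(x_\eps)\sigma_i(D\phi(x_\eps))$ and $q_\eps:=\sigma_2(D\phi(x_\eps))^T D\phi(x_\eps)$, plus a remainder $\mathcal{E}_{i,\eps}(z)=o(|z|^2)$. The first two limits of \hyp{M$_\eps$}, applied with these coefficients, together with continuity of $\sigma_i$ (\hyp{J4}) and $x_\eps\to x$ to pass coefficients to their values at $x$, then produce the limiting value
\begin{equation*}
\tfrac{1}{2}\tr[\tilde\sigma_1\tilde\sigma_1^T D^2\phi(x)]+\tfrac{1}{2}\tr[\tilde\sigma_2\tilde\sigma_2^T D^2\phi(x)]+b(D\phi(x))\cdot D\phi(x)=L_0(D\phi(x),D^2\phi(x)),
\end{equation*}
by cyclicity of the trace (which sends $\tr[A_i^T\sigma_i^T D^2\phi\,\sigma_i A_i]$ to $\tr[\tilde\sigma_i\tilde\sigma_i^T D^2\phi]$).

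The main obstacle is the uniform control of the $o(|z|^2)$ remainders $\mathcal{E}_{i,\eps}$. Given $\eta>0$, choose $\delta_\eta>0$ so that $|\mathcal{E}_{i,\eps}(z)|\leq \eta|z|^2$ for $|z|<\delta_\eta$ and all small $\eps$ (this uses the local uniform $z$-differentiability of $j_i$ at $0$ in \hyp{J4} and the uniform continuity of $D^2\phi$ near $x$). Taking $Y=I$ in the first and second limits of \hyp{M$_\eps$} gives $\limsup_{\eps\to 0}\int_{|z|<\delta_\eta}|z|^2\,d\mu_{i,\eps}<\infty$ for $i=1,2$, so the remainder contribution is at most $C\eta$, hence arbitrarily small. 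Sending $\eps\to 0$ first (at fixed $\delta$) and then $\delta\to 0$ completes \eqref{plan:conv}, and therefore establishes that $\bar u$ is a viscosity subsolution of \eqref{eq:local}. The supersolution statement for $\underline u$ follows identically, taking global minima of $u_\eps-\phi$ in place of maxima.
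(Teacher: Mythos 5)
Your proof is correct and follows essentially the same route as the paper: half-relaxed limits, a Taylor expansion of the integrand using the differentiability of $j_i$ at $z=0$ from \hyp{J4}, and the concentration limits in \hyp{\Meps}, with the tails over $\{|z|\geq\delta\}$ wiped out by the third limit in \hyp{\Meps}. The only minor deviation is that you work entirely with the bounded test function in the viscosity inequality (Definition~\ref{def2}) and dispose of the $j_1\cdot D\phi$ tail via Cauchy--Schwarz, while the paper keeps $u_\eps$ in the outer nonlocal term (Definition~\ref{def1}) and appeals more loosely to uniform boundedness; both produce $o_\eps(1)$ tails, and your explicit handling of the $o(|z|^2)$ remainder is, if anything, slightly more careful than the published argument.
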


\begin{proof}
 Let $\phi$ be a smooth bounded function such that $\bar u-\phi$ has a
 global maximum at $x$. We may assume the maximum is strict and that there
        is a sequence $\{y_\eps\}_\eps$ of global
        maximum points of $u_\eps-\phi$ such that $y_\eps\to x$ and
        $u_\eps(y_\eps)\to \bar u(x)$ as $\eps\to0$. 
        We let $L_{1,\eps}=L_{\eps,\delta}+L_{\eps}^{\delta}$ as in
        \eqref{def:L.delta}, and to see the localization effect, we
        also decompose  
        $L_{2,\eps}=\tilde L_{\eps,\delta}+\tilde 
        L_{\eps}^{\delta}$. By the maximum point property $\tilde
        L_{\eps,\delta}[u_\eps,D\phi](y_\eps)\leq 
        \tilde L_{\eps,\delta}[\phi,D\phi](y_\eps)$, and then since
        $u_\eps$ is a subsolution of \eqref{eq:lineps},
 	\begin{align}
	\label{v_ineq}
        -L_{\eps,\delta}[\phi,D\phi](y_\eps)-L_{\eps}^{\delta}[u_\eps,D\phi](y_\eps)-\tilde
        L_{\eps,\delta}[\phi,D\phi](y_\eps)-\tilde
        L_{\eps}^\delta[u_\eps,Du_\eps](y_\eps)+ u_\eps(y_\eps) \leq
        f(y_\eps)\,.
	\end{align}
Then by a Taylor expansion and \hyp{J4}, for $i=1,2$,
\begin{align*}
j_i(D\phi(y_\eps),z)=\sigma_i(D\phi(y_\eps))z+o(z)\qquad\text{as}\qquad
z\to 0,
\end{align*}
where $o(z)$ is independent of $\eps>0$. Another Taylor expansion and
hypotheses \hyp{\Meps} applied to 
$Y=\sigma_1\big(D\phi(x)\big)^TD^2\phi(x)\,\sigma_1\big(D\phi(x)\big)\in\M{P}{P}$
then gives that
	\begin{align*}
	L_{\eps,\delta}[\phi,D\phi](y_\eps)
        &=\frac{1}{2}\int_{|z|<\delta}z^T\Big[\sigma_1\big(D\phi(y_\eps)\big)^T
         D^2\phi(y_\eps)\,\sigma_1\big(D\phi(y_\eps)\big)\Big]z\,\dmu_{1,\eps}(z)+o_\delta(1)\\  
         &=\frac{1}{2}\tr \Big[
         A_1^T\Big[\sigma_1\big(D\phi(x)\big)^T
         D^2\phi(x)\,\sigma_1\big(D\phi(x)\big)\Big]A_1 \Big]+o_\eps(1) +o_\delta(1)\\
         &=\frac{1}{2}\tr \Big[
         \tilde\sigma_1\big(D\phi(x)\big)
         \sigma_1\big(D\phi(x)\big)^TD^2\phi(x) \Big]+o_\eps(1) +o_\delta(1)\,,
	 \end{align*}
         where for the last line, we use that $\tilde\sigma_1(x,p)=\sigma_1(x,p)A_1$
         is a $N\times N$ matrix as is $D^2\phi(x)$, so that we can use the
         property $\tr(M_1M_2)=\tr(M_2M_1)$.  Here the $o_\delta(1)$-term is
         independent of $\eps$
        since the measure  $|z|^2\mu_{1,\eps}(\dz)$ has a uniformly
        bounded mass. Similarly, by \hyp{J4} and \hyp{\Meps},
	\begin{align*}
	\tilde L_{\eps,\delta}[\phi,D\phi](x)
        &=\frac{1}{2}\int_{|z|<\delta} z^T\Big[\sigma_2\big(D\phi(x)\big)^T 
        D^2\phi(x)\,\sigma_2\big(D\phi(x)\big)\Big]z\,\dmu_{2,\eps}(z)\\    
        &\quad+\int_{|z|<\delta} D\phi(x)
        \sigma_2\big(D\phi(x)\big)z\,\dmu_{2,\eps}(z)+o_\eps(1)  +o_\delta(1)\\
        &=\frac{1}{2}\tr \Big[
        A_2^T\Big[\sigma_2\big(D\phi(x)\big)^TD^2\phi(x)\,
        \sigma_2\big(D\phi(x)\big)\Big]A_2\Big]\\
        &\quad +\sigma_2\big(D\phi(x)\big)D\phi(x)\cdot a+o_\eps(1)
        +o_\delta(1)\\[2mm]
        &=\frac{1}{2}\tr \Big[
        \tilde\sigma_2\big(D\phi(x)\big) \tilde\sigma_2\big(D\phi(x)\big)
        D^2\phi(x) \Big]+b\big(D\phi(x)\big)\cdot D\phi(x)+o_\eps(1) +o_\delta(1)\\
	\end{align*}
By \hyp{J1} and continuity, $D\phi(y_\eps)$ and
$j(D\phi(y_\eps),z)$ are uniformly bounded in $\eps>0$
for $|z|<1$, and by \hyp{F5} and Corollary \ref{cor:main}-$(c)$, $u_\eps$ is
also uniformly bounded:
 $$\|u_\eps\|_\infty\leq \|f\|_\infty.$$
By \hyp{\Meps} it then follows that 
$$L_{\eps}^{\delta}[u_\eps,D\phi](y_\eps)+\tilde
L_{\eps}^{\delta}[u_\eps,D\phi](y_\eps)=O\left(\int_{|z|\geq\delta}\,d\mu_\eps(z)\right)\to
0\qquad\text{as}\qquad \eps\to0$$ 
and $\delta>0$ is fixed.

Sending first $\eps\to0$ and then $\delta\to0$ in \eqref{v_ineq} then leads
to
$$-L_0\big(D\phi(x),D^2\phi(x)\big)+\bar{u}(x)\leq f(x),$$
and hence $\bar{u}$ is a viscosity subsolution of \eqref{eq:local}. In
a similar way we can show that $\underline{u}$ is a viscosity
supersolution of \eqref{eq:local}. We omit the proof.
\end{proof}

Then it is enough to invoke the comparison principle for the limit (local)
equation to conclude that Theorem~\ref{thm:localization}-$(a)$ holds:

\begin{proof}[Proof of Theorem \ref{thm:localization}-$(a)$]
    By Lemma \ref{lem:limit} $\bar{u}$ is a subsolution and
    $\underline{u}$ is a supersolution of \eqref{eq:local}, hence
    $\bar{u}\leq\underline{u}$ by the comparison principle for
    \eqref{eq:local}, see \cite{cil} or Lemma \ref{comp_loc}
    below. Since $\underline{u}\leq\bar{u}$ by 
    definition,  $\bar{u}=\underline{u}=:u$, and hence $u_\eps\to u$
    point-wise, $u$ is continuous and a viscosity
    solution of \eqref{eq:local}. 
    Now we show that the convergence is locally
      uniform. Fix any $R>0$ and take $x_\eps\in B_R(0)$ such that
    $$\max_{|x|\leq R}\Big(u_\eps(x)-u(x)\Big)=u_\eps(x_\eps)-u(x_\eps)$$
 for any $\eps >0$. Since $|x_\eps|\leq R$, there exists a convergent
 subsequence $x_\eps\to\bx$ for some $|\bx|\leq R$. By the continuity
 of $u$, the definition of 
    $\overline{u}$, and the fact that $u=\overline{u}$, it follows that 
    $$\limsup_{\eps\to0}\max_{|x|\leq R}\Big(u_\eps(x)-u(x)\Big)
    =\limsup_{\eps\to0}\Big(u_\eps(x_\eps)-u(x_\eps)\Big)\leq\overline{u}(\bx)-u(\bx)=0\,.$$ 
A similar argument shows that $\displaystyle\liminf_{\eps\to0}\max_{|x|\leq
  R}(u_\eps(x)-u(x))\geq0$. Combined with similar arguments for
$-(u_\eps-u)$, this shows that
$$\displaystyle\lim_{\eps\to0}\max_{|x|\leq R}|u_\eps(x)-u(x)|=0\qquad\text{for all}\qquad R>0,$$
 and we are done.
\end{proof}

In the fully nonlinear case, the strategy is the same and we
begin with a half-relaxed limit result.
\begin{lemma}\label{lem:limit2} Assume the assumptions of Theorem
    \ref{thm:localization}-$(b)$ hold.
	Then $\bar{u}$ is a viscosity subsolution of \eqref{eq:00} and 
        $\underline{u}$ is a viscosity supersolution of	\eqref{eq:00}.
\end{lemma}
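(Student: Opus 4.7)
The plan is to adapt the half-relaxed limit argument of Lemma~\ref{lem:limit} to the fully nonlinear setting, the only genuine addition being a use of the ellipticity of $F$ to move everything onto the test function. Only the subsolution case is described since the supersolution one is mirror-image. First, one fixes a bounded $C^{2}$ function $\phi$ such that $\bar u-\phi$ attains a strict global maximum at some $x\in\R^{N}$ (strictness via the perturbation of Remark~\ref{rem:str_max}) and, by the usual compactness argument, produces a sequence of global maxima $y_\eps$ of $u_\eps-\phi$ with $y_\eps\to x$ and $u_\eps(y_\eps)\to\bar u(x)$.

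Now for any $\delta>0$ split $L_{1,\eps}=L_{\eps,\delta}+L_\eps^\delta$ as in \eqref{def:L.delta} and write the viscosity subsolution inequality for $u_\eps$ at $y_\eps$. The key new observation (compared to Lemma~\ref{lem:limit}) is that the pointwise maximum property $u_\eps(y_\eps+j)-u_\eps(y_\eps)\leq \phi(y_\eps+j)-\phi(y_\eps)$ integrates against $\ind{|z|\geq\delta}\dmu_{1,\eps}$ and against $\dmu_{2,\eps}$ to give $L_\eps^\delta[u_\eps,D\phi](y_\eps)\leq L_\eps^\delta[\phi,D\phi](y_\eps)$ and $L_{2,\eps}[u_\eps,D\phi](y_\eps)\leq L_{2,\eps}[\phi,D\phi](y_\eps)$, so the monotonicity assumption \hyp{F1} allows one to replace the third argument of $F$ by the larger quantity $L_\eps[\phi,D\phi](y_\eps)$. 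The subsolution inequality becomes
\begin{equation*}
F\big(u_\eps(y_\eps),D\phi(y_\eps),L_\eps[\phi,D\phi](y_\eps)\big)\leq f(y_\eps),
\end{equation*}
an inequality whose nonlocal term now involves only the smooth function $\phi$.

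From this point on the argument is precisely the one carried out in Lemma~\ref{lem:limit}: a Taylor expansion of $\phi$ combined with \hyp{J4} gives $j_i(D\phi(y_\eps),z)\sim\sigma_i(D\phi(y_\eps))z$ near $z=0$, and \hyp{\Meps} converts the small-$z$ parts $L_{\eps,\delta}[\phi,D\phi](y_\eps)$ and $\tilde L_{\eps,\delta}[\phi,D\phi](y_\eps)$ into the two trace terms and the drift of $L_0(D\phi(x),D^{2}\phi(x))$ up to an $o_\delta(1)$ uniform in $\eps$; the tail pieces $L_\eps^\delta[\phi,D\phi](y_\eps)$ and $\tilde L_\eps^\delta[\phi,D\phi](y_\eps)$ vanish as $\eps\to 0$ thanks to the third line of \hyp{\Meps}, the boundedness of $\phi$, and the $\int|j_1|^{2}\dmu_{1,\eps}$-bound from \hyp{J1} (Cauchy--Schwarz handling the first-order compensation term). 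The uniform bound $\|u_\eps\|_\infty\leq\|f\|_\infty/\gamma$ from Corollary~\ref{cor:main}-$(c)$ (which applies because \hyp{F5} forces $\gamma$ to be independent of $M$) keeps the first argument of $F$ in a fixed bounded interval, while $D\phi(y_\eps)$ and $L_\eps[\phi,D\phi](y_\eps)$ also stay in bounded sets, so the continuity modulus \hyp{F3} together with \hyp{F4} let one pass to the limit first in $\eps$ and then in $\delta$ and conclude
\begin{equation*}
F\big(\bar u(x),D\phi(x),L_0(D\phi(x),D^{2}\phi(x))\big)\leq f(x),
\end{equation*}
which is the required subsolution inequality for $\bar u$.

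I do not expect any new conceptual obstacle, since the ellipticity-based reduction to a smooth-$\phi$ nonlocal term delegates the whole analytic content to the computation already performed in Lemma~\ref{lem:limit}. The only delicate point is that the tail estimates and the Taylor remainders must remain uniform in $\eps$ for fixed $\delta$, so that the iterated limit $\eps\to 0$ then $\delta\to 0$ is legitimate; this uniformity is built into \hyp{\Meps} (concentration and vanishing tails of $\mu_{i,\eps}$) and into \hyp{J1}--\hyp{J4}. The supersolution property of $\underline u$ is established symmetrically by starting from a minimum point of $u_\eps-\phi$ and reversing the pointwise and monotonicity inequalities above.
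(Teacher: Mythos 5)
Your proposal is correct and follows essentially the same route as the paper: both arguments use the maximum-point property of $u_\eps-\phi$ together with the ellipticity assumption \hyp{F1} to push the nonlocal term onto the smooth test function, and then invoke the convergence computations already carried out for the quasilinear case in Lemma~\ref{lem:limit} and the uniform bound on $\|u_\eps\|_\infty$ from Corollary~\ref{cor:main}-$(c)$. The only cosmetic difference is that you move the tail pieces $L_\eps^\delta$, $\tilde L_\eps^\delta$ onto $\phi$ as well (effectively working with Definition~\ref{def2}), whereas the paper keeps them evaluated on $u_\eps$ and estimates them directly; both are equally valid given the uniform bounds.
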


\begin{proof}
    Take a test-function
    $\phi$ such that $\overline{u}-\phi$ has a maximum at $\bx$ that we can
    assume to be strict. Hence $u_\eps-\phi$ also has a maximum at some 
    $x_\eps$, and $x_\eps\to\bx$  and $u_\eps(x_\eps)\to \ol u(\bx)$
    as $\eps\to0$. Hence $\tilde L_{\eps,\delta}[u_\eps,D\phi](x_\eps)\leq
\tilde L_{\eps,\delta}[\phi,D\phi](x_\eps)$, and since $u_\eps$ is a
subsolution  and \hyp{F1} holds, 
 $$F\Big(u_\eps(x_\eps),D\phi(x_\eps),L_{\eps,\delta}[\phi,D\phi](x_\eps)+L_\eps^\delta[u_\eps,D\phi](x_\eps)+\tilde
    L_{\eps,\delta}[\phi,D\phi](x_\eps)+\tilde
    L_\eps^\delta[u_\eps,D\phi](x_\eps)\Big)\leq f(x_\eps)\,.$$
    As in the proof of Lemma \ref{lem:limit}, 
    $$L_\eps\text{-terms}\to
    L_0\big(D\phi(\bx),D^2\phi(\bx)\big)\qquad\text{as}\qquad\eps\to0\,.$$
    By the continuity of $F$, we then send $\eps\to0$ to find that
    $$F\Big(\overline{u}(\bx),D\phi(\bx),
    L_0\big(D\phi(\bx),D^2\phi(\bx)\big)\Big)\leq f(\bx)\,,$$
    which means that $\overline{u}$ is a subsolution of \eqref{eq:00}.
    In a similar way we can show that $\underline{u}$ is a
    supersolution of \eqref{eq:00}.
\end{proof}

Then we prove a comparison result for the limit equation.

\begin{lemma}
    \label{comp_loc} Under the assumptions of Theorem \ref{thm:localization}-$(b)$,
if $u$ is a bounded usc subsolution of \eqref{eq:00} and $v$ 
  is a bounded lsc supersolution of \eqref{eq:00}, then
  $u\leq v$ in $\R^{N}$.  
\end{lemma}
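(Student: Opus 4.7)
The plan is to proceed by contradiction via doubling of variables and Ishii's lemma (\cite[Theorem~3.2]{cil}) for the local second-order equation \eqref{eq:00}. Assume $M:=\sup_{\R^N}(u-v)>0$ and introduce the penalization
$$\Phi_{\eps,R}(x,y):=u(x)-v(y)-\frac{1}{\eps^2}|x-y|^2-\psi\Big(\frac{x}{R}\Big)-\psi\Big(\frac{y}{R}\Big),$$
with $\psi$ the smooth bounded function from the proof of Theorem~\ref{thm:comp}-$(a)$. Exactly as there, for $R$ large and $\eps$ small, $\Phi_{\eps,R}$ attains its supremum $M_{\eps,R}$ at a point $(\bar x,\bar y)$, with $|\bar x-\bar y|=O(\eps)$ uniformly in $R$, $\psi(\bar x/R)+\psi(\bar y/R)\to 0$ as $R\to\infty$, and $M_{\eps,R}\to M_\eps\searrow\tilde M\geq M$ as first $R\to\infty$ then $\eps\to 0$. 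Set
$$p_x:=\frac{2(\bar x-\bar y)}{\eps^2}+\frac{1}{R}D\psi\Big(\frac{\bar x}{R}\Big),\qquad p_y:=\frac{2(\bar x-\bar y)}{\eps^2}-\frac{1}{R}D\psi\Big(\frac{\bar y}{R}\Big),$$
so that $|p_x-p_y|=O(1/R)$ and, for each fixed $\eps>0$, both $|p_x|$ and $|p_y|$ remain bounded as $R\to\infty$.

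Ishii's lemma then produces symmetric matrices $X,Y\in\M{N}{N}$ satisfying the viscosity inequalities
$$F\big(u(\bar x),p_x,L_0(p_x,X)\big)\leq f(\bar x),\qquad F\big(v(\bar y),p_y,L_0(p_y,Y)\big)\geq f(\bar y),$$
together with the classical matrix bound
$$\begin{pmatrix} X & 0 \\ 0 & -Y \end{pmatrix}\leq \frac{C}{\eps^2}\begin{pmatrix} I & -I \\ -I & I \end{pmatrix}+\frac{C}{R^2}\begin{pmatrix} I & 0 \\ 0 & I \end{pmatrix}.$$
Inserting the term $F(u(\bar x),p_y,L_0(p_y,Y))$ and invoking \hyp{F2}--\hyp{F3} (with $M=\|u\|_\infty\vee\|v\|_\infty$ and some $r_\eps$ bounding all the $F$-arguments for fixed $\eps$ and $R>1$), one obtains
$$\gamma_M\big(u(\bar x)-v(\bar y)\big)\leq\omega_{M,r_\eps}\big(|p_x-p_y|+|L_0(p_x,X)-L_0(p_y,Y)|\big)+\omega_f(|\bar x-\bar y|).$$

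The key estimate is then $|L_0(p_x,X)-L_0(p_y,Y)|\to 0$ as $R\to\infty$ for $\eps$ fixed. Applying the above matrix inequality to the pairs $(\tilde\sigma_i(p_x)e_k,\tilde\sigma_i(p_y)e_k)$ and summing over $k$ yields, for $i=1,2$,
$$\tr\big[\tilde\sigma_i(p_x)\tilde\sigma_i(p_x)^TX\big]-\tr\big[\tilde\sigma_i(p_y)\tilde\sigma_i(p_y)^TY\big]\leq\frac{C}{\eps^2}\|\tilde\sigma_i(p_x)-\tilde\sigma_i(p_y)\|^2+\frac{C}{R^2}\big(\|\tilde\sigma_i(p_x)\|^2+\|\tilde\sigma_i(p_y)\|^2\big),$$
and the drift contribution is controlled by $|b(p_x)\cdot p_x-b(p_y)\cdot p_y|\leq |b(p_x)-b(p_y)|\,|p_x|+|b(p_y)|\,|p_x-p_y|$. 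By \hyp{J4} and Definition~\ref{defsigma}, $\tilde\sigma_1,\tilde\sigma_2,b$ are continuous; since $|p_x-p_y|=O(1/R)$ and $p_x,p_y$ lie in a bounded set for fixed $\eps$, all these bounds tend to $0$ as $R\to\infty$, so $|L_0(p_x,X)-L_0(p_y,Y)|=o_R(1)$.

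Taking first $R\to\infty$ and then $\eps\to 0$ in the main inequality: the right-hand side vanishes (using also $\omega_f(|\bar x-\bar y|)\leq\omega_f(O(\eps))\to 0$ from \hyp{F5}), while the left-hand side is at least $\gamma_M\tilde M\geq\gamma_M M>0$, yielding the contradiction. The main obstacle is the term $\frac{C}{\eps^2}\|\tilde\sigma_i(p_x)-\tilde\sigma_i(p_y)\|^2$: with $\tilde\sigma_i$ merely continuous, one cannot in general control it as $\eps\to 0$. The trick is that the $\psi$-penalization forces $|p_x-p_y|=O(1/R)$ \emph{independently of $\eps$}, so sending $R\to\infty$ before $\eps\to 0$ kills this bad term. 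This is exactly the reversed order of limits that appears in the proof of Theorem~\ref{thm:comp}-$(b)$ and is the reason the data must be $x$-independent.
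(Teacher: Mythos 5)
Your proof is correct and takes essentially the same route as the paper, which only sketches this argument by invoking Lemma~\ref{lem:3.14} and the template of \cite[Theorem~5.1]{cil}; you have filled in precisely the details the paper omits, correctly emphasizing that the $\psi$-penalization yields $|p_x-p_y|=O(1/R)$ uniformly in $\eps$ so that sending $R\to\infty$ before $\eps\to0$ kills the $\eps^{-2}\|\tilde\sigma_i(p_x)-\tilde\sigma_i(p_y)\|^2$ term. One small imprecision: the matrix inequality gives only the one-sided bound $L_0(p_x,X)-L_0(p_y,Y)\leq o_R(1)$, not $|L_0(p_x,X)-L_0(p_y,Y)|\to0$; this one-sided bound, combined with \hyp{F1} (monotonicity in $\ell$) and \hyp{F3}, is exactly what the argument needs, so the conclusion stands.
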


To prove this result, first note that by \hyp{F1} and \hyp{F2}, the
nonlinearity in \eqref{eq:00}, 
\begin{align}
\label{Hdef}
H(x,u,p,X):=F\Big(u,p,L_0(p,X)\Big)-f(x)\,,
\end{align}
is strictly increasing in $u$ and nonincreasing in $X$. Moreover, it is
straightforward to check that we also have the following result. 
\begin{lemma} 
\label{lem:3.14}Assume  \hyp{J4} and \hyp{F3} hold,  $H$ is
  defined in \eqref{Hdef}, and  $M,r,\hat r,R,\eps> 0$. Let
  $x,y\in\R^N$, $|u|\leq M$,  
$p=2(x-y)/\eps^2+o_R(1)$, $|p|<r$,
$X,Y\in\mathcal{S}_N$, and $-\hat r\leq X\leq Y\leq\hat r$ such that 
$$\big(-\frac{8}{\eps^2}+o_R(1)\big)\begin{pmatrix} I & 0\\ 0 & I \end{pmatrix}\leq 
\begin{pmatrix} X & 0 \\ 0 & Y \end{pmatrix}\leq \frac{1}{\eps^2}
\begin{pmatrix} I & -I\\ -I & I \end{pmatrix}+o_R(1)\begin{pmatrix} I
  & 0 \\ 0 & I \end{pmatrix}.$$

Then there are modulii of continuity
$\omega_M,\omega_{M,r,\hat r}$ such that
$$H\Big(y,u,p-O(\tfrac1R),Y\Big)-H\Big(x,u,p+O(\tfrac1R),X\Big)\leq
\omega_M(x-y)
+\frac1{\eps^2}\omega_{M,r,\hat r}\big(o_R(1)\big)+o_R(1)\,.$$ 
\end{lemma}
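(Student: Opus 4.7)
The plan is to avoid invoking the full matrix inequality in its strongest form and instead exploit only the weaker structural property $X\leq Y$ together with the positivity of $\tilde\sigma_i\tilde\sigma_i^T$. First, I would unfold $H$ from \eqref{Hdef} and decompose
$$H(y,u,p_2,Y)-H(x,u,p_1,X)=\big[F(u,p_2,L_0(p_2,Y))-F(u,p_1,L_0(p_1,X))\big]+\big[f(x)-f(y)\big],$$
where $p_1:=p+O(\tfrac{1}{R})$ and $p_2:=p-O(\tfrac{1}{R})$. By \hyp{F5}, $f\in BUC(\R^N)$, so immediately $f(x)-f(y)\leq \omega_f(|x-y|)=:\omega_M(x-y)$.

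The heart of the proof is the comparison
$$L_0(p_2,Y)\geq L_0(p_1,X)-E_R,$$
with $E_R\to 0$ as $R\to\infty$. This rests on two facts: (i) since $X\leq Y$ and $\tilde\sigma_i(q)\tilde\sigma_i(q)^T$ is positive semidefinite (of the form $MM^T$), one has $\tr[\tilde\sigma_i(q)\tilde\sigma_i(q)^T X]\leq \tr[\tilde\sigma_i(q)\tilde\sigma_i(q)^T Y]$ and hence $L_0(q,X)\leq L_0(q,Y)$ for every $q$; (ii) $p\mapsto \tilde\sigma_i(p)\tilde\sigma_i(p)^T$ and $p\mapsto b(p)\cdot p$ are continuous on compacts by \hyp{J4}, so together with $\|X\|\leq \hat r$ this yields
$$|L_0(p_1,X)-L_0(p_2,X)|\leq \hat r\,\omega_{\tilde\sigma,r}(|p_1-p_2|)+\omega_{b,r}(|p_1-p_2|)=:E_R,$$
and $E_R=o_R(1)$ because $|p_1-p_2|=O(\tfrac1R)$.

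To conclude, I apply \hyp{F1} (monotonicity of $F$ in its third slot) to replace $L_0(p_2,Y)$ by the smaller $L_0(p_1,X)-E_R$ inside the first $F$, and then \hyp{F3} — which applies because $|p_i|\leq r+1$ and $|L_0(p_i,\cdot)|$ is bounded by some $r'=r'(r,\hat r)$ thanks to continuity of $\tilde\sigma_i, b$ and $\|X\|,\|Y\|\leq \hat r$ — to get
$$F(u,p_2,L_0(p_2,Y))-F(u,p_1,L_0(p_1,X))\leq \omega_{M,r'}(|p_1-p_2|+E_R)=\omega_{M,r,\hat r}(o_R(1)).$$
Gathering the pieces gives the claimed bound; the explicit $\tfrac{1}{\eps^2}$ factor in the statement is only needed to comfortably absorb the $\hat r$-dependence in applications (where typically $\hat r=O(\tfrac{1}{\eps^2})$), otherwise our estimate is in fact sharper. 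The main (minor) obstacle is recognizing that the full Jensen--Ishii matrix inequality is overkill for this particular bound — only $X\leq Y$ and PSD-ness of $\tilde\sigma_i\tilde\sigma_i^T$ are used — and carefully tracking how the modulus $\omega_{M,r,\hat r}$ absorbs the dependence on $\hat r$ so that the estimate takes the convenient form stated.
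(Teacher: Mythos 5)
Your proof is correct. Since the paper gives no proof of Lemma \ref{lem:3.14} (it only says ``it is straightforward to check,'' pointing to Theorem 5.1 of \cite{cil} for Lemma \ref{comp_loc}), let me compare with the presumably intended route. The standard CIL argument tests the Jensen--Ishii matrix inequality with the vectors $(\tilde\sigma_i(p_1)v,\tilde\sigma_i(p_2)v)$ to obtain
$$\tr\big[\tilde\sigma_i(p_1)\tilde\sigma_i(p_1)^T X\big]-\tr\big[\tilde\sigma_i(p_2)\tilde\sigma_i(p_2)^T Y\big]\le \frac{C}{\eps^2}\big\|\tilde\sigma_i(p_1)-\tilde\sigma_i(p_2)\big\|^2+o_R(1),$$
which is the source of the explicit $\tfrac{1}{\eps^2}$ in the statement. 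You instead use only the ordering $X\le Y$ (yielding degenerate ellipticity of $L_0$) together with the norm bounds $\|X\|,\|Y\|\le\hat r$, converting the $\tfrac{1}{\eps^2}$ prefactor into a factor $\hat r$ that the statement already permits to be absorbed into $\omega_{M,r,\hat r}$. Since in the proof of Lemma \ref{comp_loc} one sends $R\to\infty$ for fixed $\eps$ and $\hat r$, both bounds are equally effective, and your version is arguably cleaner. One remark worth making explicit: the order of your moves is essential --- the one-sided inequality $L_0(p_2,Y)\ge L_0(p_1,X)-E_R$, then monotonicity \hyp{F1}, then \hyp{F3}. One cannot apply \hyp{F3} directly to the pair $(L_0(p_1,X),L_0(p_2,Y))$, because those two quantities are only ordered up to a small error, not close to each other when $\|X-Y\|$ is of order $\hat r$. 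Finally, you quietly use \hyp{F1} and the modulus of $f$ from \hyp{F4}/\hyp{F5}, which are not in the lemma's stated hypotheses but do hold in the only context where the lemma is invoked; this is harmless but worth noting.
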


\begin{proof}[Proof of Lemma \ref{comp_loc}]
In view of our assumptions and Lemma \ref{lem:3.14}, the proof is
standard. It can be obtained by following the line of
reasoning of the proof of Theorem 5.1 in \cite{cil}. We omit the details.
\end{proof}

\begin{proof}[Proof of Lemma \ref{thm:localization}-$(b)$]
In view of Lemma \ref{lem:limit2} and \ref{comp_loc}, we can conclude the
proof exactly as for Theorem~\ref{thm:localization}-$(a)$ above. 
\end{proof}


\section{Extensions}
\label{sect:extensions}

\subsection{Parabolic equations}\label{sect:parabolic}
In this section we extend the results to the case of quasilinear and
fully nonlinear parabolic equations,
\begin{align}\label{eq:main.nonlocal.time}
    u_t -L[u,Du]&=f(x,t)\qquad \text{in}\qquad Q_T:=\R^N\times (0,T)\,,\\
\label{eq:0.time}
    u_t+F\big(u,Du,L[u,Du]\big)&=f(x,t)\qquad\text{in}\qquad Q_T\,,
\end{align}
with initial data
\begin{align*}\label{IC}
u(x,0)=u_0(x)\qquad\text{in}\qquad \R^N.
\end{align*}
These extensions are straightforward since the time variable does not play an
important role here. We keep the same notation, definitions, and most
of the assumptions as in the previous sections.  However, we
take the following parabolic versions of \hyp{F2}, \hyp{F4} and \hyp{F5}:

\medskip
\noindent\hyp{F2'}\ assumption  \hyp{F2} holds with $\gamma_M=0$.

\medskip
\noindent\hyp{F6} \ $u_0\in UC(\R^N)$ and $f\in UC(Q_T)$.
\medskip

\noindent\hyp{F6'} \ $u_0$ and $f$ satisfy \hyp{F6}, are bounded, and the
quantities in \hyp{F2}, \hyp{F3} are independent of $M$.

\medskip
As usual for parabolic problems, we do no longer need strict
monotonicity in $u$, see \hyp{F2'}.
We have the following parabolic version of the existence and 
comparison results. 

\begin{theorem} {\rm (Comparison results)}\label{thm:comp.time}

\noindent $(a)$ \brak{Quasilinear case} 
            Assume \hyp{M}, \hyp{J1}--\hyp{J2} and \hyp{F6}. If $u$ is a
            bounded usc subsolution of \eqref{eq:main.nonlocal.time},
            $v$        a bounded lsc supersolution of
            \eqref{eq:main.nonlocal.time}, and $u(x,0)\leq v(x,0)$ in
            $\R^N$, then $u\leq v$ in 
            $\ol Q_T$.\smallskip 

  \noindent $(b)$ \brak{Fully nonlinear case}
    Assume \hyp{M}, \hyp{J1}--\hyp{J2}, \hyp{F1}, \hyp{F2'}, \hyp{F3}, and 
    \hyp{F6}.
If  $u$ is a bounded usc viscosity subsolution of \eqref{eq:0.time},
$v$ a bounded lsc viscosity supersolution of \eqref{eq:0.time}, and
$u(x,0)\leq v(x,0)$ in $\R^N$, then $u\leq  v$ in $\ol Q_T$.
\end{theorem}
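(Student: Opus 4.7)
The strategy is the standard parabolic adaptation of the doubling-of-variables argument of Theorem~\ref{thm:comp}. Since the operator $L$ acts only in the spatial variable, the time variable is a mere parameter for the nonlocal terms, and essentially all the spatial estimates of the elliptic proof go through unchanged. The two new ingredients are the usual parabolic devices: doubling in time, and introducing a blow-up barrier at $t=T$ to compensate for the absence of strict monotonicity in $u$ (which is now allowed by \hyp{F2'}).

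Assume for contradiction that $M:=\sup_{Q_T}(u-v)>0$. I would consider, for small $\eta,\alpha,\eps>0$ and large $R>0$,
\[
\Phi(x,y,t,s):=u(x,t)-v(y,s)-\tfrac{1}{\eps^2}\varphi(x-y)-\tfrac{(t-s)^2}{\alpha^2}-\psi(x/R)-\psi(y/R)-\tfrac{\eta}{T-t},
\]
with $\varphi,\psi$ chosen as in the elliptic proof. The term $(t-s)^2/\alpha^2$ plays the role of $\varphi(x-y)/\eps^2$ in time, while $\eta/(T-t)$ forces the maximum point $(\bx,\by,\bar t,\bar s)$ of $\Phi$ to satisfy $\bar t<T$ (in fact $T-\bar t\ge \eta/(\|u\|_\infty+\|v\|_\infty)$). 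Standard penalization arguments, as in step \textbf{1)} of the proof of Theorem~\ref{thm:comp}, yield $\sup\Phi\ge M/2$ for appropriate parameter choices and give $|\bx-\by|\to 0$, $|\bar t-\bar s|\to 0$ along the joint limit. The hypothesis $u(\cdot,0)\le v(\cdot,0)$ combined with the lower semicontinuity of $v$ rules out $\bar t=0$ for small parameters, since otherwise one would obtain $M/2\le v(\bx,0)-v(\by,\bar s)+o(1)\to 0$.

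The corresponding parabolic sub- and supersolution inequalities, tested against $\phi(\cdot,\by,\cdot,\bar s)$ and $-\phi(\bx,\cdot,\bar t,\cdot)$, are then subtracted. The two $(t-s)^2/\alpha^2$-time derivatives cancel, leaving the strict extra term $\eta/(T-\bar t)^2$ on the left-hand side. The spatial contribution reproduces exactly the quantities $I_\delta+I^\delta+I_2$ of steps \textbf{2)}--\textbf{4)} of the elliptic proof, and is controlled along the ordered limit $\delta\to 0$, $R\to\infty$, $\alpha\to 0$, $\eps\to 0$. In case $(a)$ the $F$-part reduces to $u(\bx,\bar t)-v(\by,\bar s)\ge M/2$, and one concludes $M/2\le 0$, a contradiction. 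In case $(b)$, since $u(\bx,\bar t)\ge v(\by,\bar s)$, one uses \hyp{F1} and \hyp{F2'} to pass from $F(u(\bx,\bar t),\cdot,\cdot)$ to $F(v(\by,\bar s),\cdot,\cdot)$, then \hyp{F3} to bound the remaining difference in the $(p,\ell)$-slots by a modulus of $O(1/R)+|I_x-I_y|$, exactly as in the proof of Theorem~\ref{thm:comp}$(b)$; the contradiction takes the form $\eta/T^2\le 0$.

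The main technical point to verify is that introducing the parameter $\alpha$ does not disrupt the delicate limit-ordering of the elliptic proof. This reduces to checking that the $(t-s)^2/\alpha^2$-term sits only in the time derivatives (where it cancels) and therefore does not enter the bounds for $|D_x\phi|$, $|I_x|$, $|I_y|$, or the constants $C_{j,r_\eps}$ and $\omega_{j,r_\eps}$ from \hyp{J1}--\hyp{J2}. Once this uniformity in $\alpha$ is established, the parabolic argument is just the elliptic one plus the two standard parabolic devices: the time-doubling $(t-s)^2/\alpha^2$ and the barrier $\eta/(T-t)$ replacing strict monotonicity in $u$.
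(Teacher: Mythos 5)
Your proposal is correct and follows essentially the same parabolic doubling-of-variables strategy as the paper. The only genuine difference is in the choice of time penalization: the paper's test function carries \emph{two} auxiliary terms, a barrier $c/(T-t)$ that confines $\bar t<T$ (and is sent to $0$ at the very end of the limit chain) plus a linear term $\delta m\,t/T$ whose time derivative $\delta m/T$ supplies the strict positive left-hand side required since \hyp{F2'} allows $\gamma_M=0$; you instead use the single barrier $\eta/(T-t)$ with $\eta$ held fixed, and extract the strict quantity $\eta/(T-\bar t)^2\geq \eta/T^2$ directly from its derivative. Both devices work because the penalization is in $t$ only, hence cancels in the doubling and never enters the spatial gradients or the nonlocal estimates; your version is slightly more economical (one fewer parameter and one fewer limit). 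The remaining ingredients you describe — $\bar t,\bar s\in(0,T)$ via the barrier, the initial-data hypothesis, \hyp{F1}/\hyp{F2'}/\hyp{F3} to compare the $u$- and $v$-inequalities, and the elliptic-proof estimates for $I_\delta,I^\delta,I_2$ applied slice-by-slice in time — match the paper's argument, and your observation that the time-doubling parameter does not perturb the delicate $\delta\to0$, $R\to\infty$, $\eps\to0$ ordering is exactly the point.
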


\begin{proof}[Sketch of proof] 
We assume by contradiction that
$$m=\sup_{\R^N\times[0,T]}(u-v)>0.$$
We need to     double the variables in time as well as in space and  consider
    $$\Phi_{\eps,\beta,R,c,\delta}(x,y,s,t):=u(x,t)-v(y,s)-\phi(x,t,y,s)$$
    where
    $$\phi(x,t,y,s)=\frac{1}{\eps^2}\varphi(x-y)+\frac{|t-s|^2}{\beta^2}+
    \psi\Big(\frac{x}{R}\Big)
    + \psi\Big(\frac{y}{R}\Big)+\frac{c}{T-t}+\delta m\frac tT\,,$$
$\delta\in(0,1)$, and $\varphi$ and $\psi$ are defined in the proof of Theorem
\ref{thm:comp}.

A standard argument shows that
$$\sup_{\R^{2N}\times[0,T]^2}\Phi_{\eps,\beta,R,c,\delta}(x,y,t,s)\geq
\sup_{\R^{N}\times[0,T]}\Phi_{\eps,\beta,R,c,\delta}(x,x,t,t)\geq m-o_R(1)-c-\delta m>0$$
 for $c$ small enough and $R$ big enough. By
    definition, $\Phi_{\eps,\beta,R,c}$ will attain its supremum at
    some point $(\bx,\bar t,\bar y,\bar s)$, and
    since $u(x,0)\leq v(x,0)$, we may assume that both
      $\bar t, \bar s >0$ by taking $\eps$ and $\beta$ small enough. Because of the term $c/(T-t)$, we may also
      assume that $\bar t, \bar s<T$. Hence we may use the viscosity
      inequalities for $u$ and $v$. After we have subtracted these
      inequalities and observed that $\delta m\frac1 T \leq 
\phi_t-\phi_s$,
we use the continuity of the equation to send $\beta\to0$ and obtain
$\bar s=\bar t\in(0,T)$. The inequality corresponding to \eqref{diff}
then takes the form
$$\delta m\frac1 T\leq F(v(\by,\bar t),\dots)-F(u(\bx,\bar
t),\dots)+f(\bx,\bar t)-f(\by,\bar t).$$
Since $m>0$, we can use \hyp{F2'} to see that
$$\delta m\frac1 T\leq F(u(\bx,\bar t),\dots)-F(u(\bx,\bar
t),\dots)+f(\bx,\bar t)-f(\by,\bar t)=R.H.S.$$
At this stage we proceed as in the proof of Theorem~\ref{thm:comp} 
(but omitting \hyp{F2}), and show that 
    $$\delta m\frac1 T \leq \lim_{c\to0}\lim_{\eps\to0}\lim_{R\to\infty}\lim_{\beta\to0}
    R.H.S.\leq0\,.$$
This is a contradiction to our original assumption $m>0$ and the proof is complete.
\end{proof}

\begin{theorem}[Existence] \label{thm:existence.time}
    Under the assumptions of Theorem \ref{thm:comp.time}, \hyp{F6'},
    and \hyp{J3}, there 
    exists a bounded viscosity solution of \eqref{eq:0.time}.
\end{theorem}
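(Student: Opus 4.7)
The strategy will mirror the elliptic existence proof (Theorem~\ref{thm:existence}) since, as the authors note, the time variable plays a passive role here. The implicit gradient dependence inside $j_1, j_2$ still blocks both Perron's method and naive compactness arguments, so I plan to rerun the Sirtaki-type scheme of Section~\ref{sect:existence}: approximate on a ball, regularize the measure, truncate the nonlinear term, add vanishing viscosity, and apply Schauder's fixed point theorem to produce a smooth solution; then peel off the approximations one at a time using the half-relaxed limit method, with the parabolic comparison Theorem~\ref{thm:comp.time} forcing the upper and lower limits to coincide at each stage. As in the elliptic case, I would first handle the quasilinear problem \eqref{eq:main.nonlocal.time} (the general case \eqref{eq:0.time} follows by the additional truncation of $F$ described at the end of the proof of Theorem~\ref{thm:existence}).

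After strengthening the assumptions to \hyp{J1'}, \hyp{J2'} and to bounded Lipschitz $u_0, f$, the approximate problem becomes
\begin{equation*}
\begin{cases}
u_t - \eps \Delta u - T_M\big[L^R_k[u, Du]\big] = f(x,t) & \text{in } B_R \times (0,T), \\
u = 0 & \text{on } \partial B_R \times (0,T), \\
u(\cdot,0) = u_0^R & \text{in } B_R,
\end{cases}
\end{equation*}
where $L^R_k$, $P_R$, $T_M$ and $\mu_{i,k}$ are as in Section~\ref{sect:existence} and $u_0^R$ is a mollified cutoff of $u_0$ vanishing on $\partial B_R$. I would solve this by the same fixed-point map $v \mapsto u$ used in Proposition~\ref{lem:existence.approx.pb}, but with the parabolic linear theory and parabolic Schauder estimates in $C^{1+\theta/2,\,2+\theta}$ replacing the elliptic $C^{2,\theta}$ theory; continuity and compactness of the map in $X = C^{1+\theta_0/2,\,\theta_0}(\overline{Q_T^R})$ follow from Lemma~\ref{app:lem.reg} exactly as before, and the parabolic maximum principle yields the a priori bound $\|u\|_\infty \leq \|u_0\|_\infty + T\|f\|_\infty$. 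Setting $k = M = 1/\eps = n$ and calling the solution $u_{R,n}$, I would then send $R \to \infty$ and afterwards $n \to \infty$ via half-relaxed limits, invoking Theorem~\ref{thm:comp.time}-$(a)$ to deduce continuity of the limit and convergence of the full sequence.

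Two points require some care. First, to identify the initial trace $\overline u(x,0) = \underline u(x,0) = u_0(x)$, I would construct barriers of the form $u_0(x_0) \pm \omega_{u_0}(|x-x_0|) \pm Ct$ for large $C$ depending on $\|f\|_\infty$ and on uniform bounds on $L_\eps$ applied to such spatial moduli; this is standard once \hyp{F6} is in hand. Second, to remove \hyp{J1'}, \hyp{J2'} and the Lipschitz regularity of data, I would follow the two-step procedure of Proposition~\ref{prop:ex}: mollify $j_i$, $u_0$, $f$, take limits using Lemma~\ref{lem:reg.j} and Theorem~\ref{thm:comp.time}-$(a)$, then truncate via $j_1^M(p,z) = |z|\, T_M(j_1(p,z)/|z|)$ and take a further limit. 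The main obstacle, exactly as in the elliptic setting, is passing to the limit under the singular integral defining $L_1$ along the truncation $j_1^M \to j_1$: this is where \hyp{J3} is essential, since it supplies the $\mu_1$-equi-integrability that feeds Vitali's convergence theorem on $\{|z| < \delta\}$ uniformly in $M$. No genuinely new parabolic difficulty appears beyond this.
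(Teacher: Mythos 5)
Your proposal follows essentially the same route as the paper's sketch: pose an approximate problem on a ball with truncated operator, mollified measures, projection $P_R$, and vanishing viscosity; produce a smooth solution via Schauder's fixed point theorem, replacing the elliptic $C^{2,\theta}$ and $W^{2,p}$ estimates with their parabolic $H_{2+\theta_0}$ and $W^{2,1}_p$ counterparts; then peel off the approximations through half-relaxed limits and the parabolic comparison Theorem~\ref{thm:comp.time}, finishing the general $j_i$, $f$, $u_0$ by the mollification and truncation steps of Proposition~\ref{prop:ex}, with \hyp{J3} feeding Vitali's theorem in the last truncation step. The two points you single out for care---compatibility of $u_0$ with the zero lateral boundary data (via the cutoff $u_0^R$) and identification of the initial trace by explicit barrier functions---are glossed over in the paper's sketch but are indeed required in a complete argument, so flagging them is a small improvement; otherwise the only cosmetic difference is that the paper's parabolic approximate problem truncates $F$ directly (note that, unlike the elliptic case, no $\gamma u$ term needs to be pulled outside $T_M$ since \hyp{F2'} drops strict monotonicity and well-posedness comes from $u_t$), whereas you mirror the elliptic proof by treating the quasilinear equation \eqref{eq:main.nonlocal.time} first.
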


\begin{proof}[Sketch of proof]
    We first assume \hyp{J1'}, \hyp{J2'}
    and that $(x,t)\mapsto f(x,t)$, $x\mapsto u_0(x)$ are bounded and 
    Lipschitz continuous. The general result will follow from passage to the
    limit as in Proposition~\ref{prop:ex}. 
    Then we  consider the parabolic version of
    the approximation we used in Section~\ref{sect:existence}:
    \begin{equation*}
     \begin{cases}  u_t+T_M\Big[F\big(u,Du,L^R_{k}[u,Du]\big)\Big]
         -\eps\Delta u=f(x,t)\,,&\quad  (x,t)\in B_R(0)\times(0,T)\,,\\
         u(x,t)=0,&  \quad       (x,t)\in \partial B_R(0)\times(0,T)\,,\\
         u(x,0)=u_0(x),& \quad x\in B_R(0)\,.
    \end{cases}
    \end{equation*}
Assuming this problem has a solution $u_{M,R,k,\eps}$, we pass to the limit as
$R\to\infty$, $k\to\infty$, $M\to\infty$,  $\eps\to0$ as in the
elliptic case. Using 
half-relaxed limits and comparison for the limit equation, we show
    that the sequence of solutions has a limit which is a solution of
    \eqref{eq:0.time}. Existence is then proved.

To prove that there exists a solution of the approximate problem we
use Schauder's fixed point theorem and the argument given in
Proposition \ref{lem:existence.approx.pb} with some small modifications: 

    \noindent \textbf{1)} We use Schauder's fixed point theorem 
    in the Banach space $$X:=
          H_{1+\theta_0}\big([0,T]\times\overline{B}_R\big)
$$
    for some $\theta_0\in(0,1)$. The space $H_{1+\theta_0}$
    ($=C^{1+\theta_0,\frac{1+\theta_0}2}$) is a standard  
    parabolic H\"older space where $u\in H_{1+\theta_0}$ e.g. implies
    that $Du\in H_{\theta_0}$. See
    page 46 in \cite{LBook} or Section 1.2.3 in \cite{WYW:Book} for the definition.

    \noindent\textbf{2)} The time-dependent version of Lemma~\ref{app:lem.reg} 
    remains valid: if $v\in X$, then
    $(x,t)\mapsto L^R_k[v,Dv](x,t)$ belongs to
    $H_{\theta_0}(\overline{B}_R\times[0,T])$, and if $v_n\to v$ in 
    $X$, then $L^R_k[v_n,Dv_n]\to L^R_k[v,Dv]$ in
    $H_{\theta_0}(\overline{B}_R\times[0,T])$. 

    \noindent \textbf{3)} The $C^{2,\theta_0}$ regularity result
    that we use in Lemma~\ref{lem:existence.approx.pb} (step 2) is
    replaced by the parabolic $H_{2+\theta_0}$ version in Theorem 4.28 in
    \cite{LBook}. 

    \noindent\textbf{4)} In Lemma~\ref{lem:existence.approx.pb} (step 3),
    instead of the $W^{2,p}$-theory we use the parabolic
    $W^{2,1}_p$-theory of Theorem 7.17 in \cite{LBook}.
We also use the compact embedding of $W^{2,1}_p$ into $X$ for $p$ big
    enough, see Theorem 1.4.1 in \cite{WYW:Book}.
\end{proof}

For the local limit result, we introduce the local parabolic
equations
\begin{align}\label{eq:local.time}
    u_t -L_0(Du,D^2u)&=f(x,t)\qquad \text{in}\qquad Q_T,\\
\label{eq:00.time}
    u_t+F\Big(u,Du,L_0(Du,D^2u)\Big)&=f(x,t)\qquad\text{in}\qquad Q_T\,,
\end{align}
with an initial data $u(x,0)=u_0(x)$. The result is the following:

\begin{theorem} {\rm (Localization)}  \label{thm:localization.time}

\noindent $(a)$ \brak{Quasilinear case} 
            Under the assumptions of Theorem
            \ref{thm:comp.time}-$(a)$, \hyp{F6'}, \hyp{\Meps}, and \hyp{J4}, any
            sequence of solutions  $u_\eps$ of \eqref{eq:lineps} converge locally 
            uniformly in $\ol Q_T$ as $\eps\to0$ to the solution $u$ of
            \eqref{eq:local.time}.\smallskip

  \noindent $(b)$ \brak{Fully nonlinear case}
             Under the assumptions of Theorem \ref{thm:comp.time}-$(b)$, \hyp{F6'}, \hyp{\Meps}, \hyp{J4},  any sequence of solutions  $u_\eps$
      of \eqref{eq:eps} converge locally uniformly in $\ol Q_T$ as
      $\eps\to0$ to the solution $u$ of \eqref{eq:00.time}.
\end{theorem}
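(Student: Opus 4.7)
The strategy mirrors exactly the one used in the elliptic case (Theorem \ref{thm:localization}): we combine the half-relaxed limit method with a strong comparison principle for the limit (local) parabolic equation. Define, for $(x,t)\in\ol Q_T$,
\begin{align*}
\bar u(x,t):=\limsup_{\eps\to 0,\,(y,s)\to(x,t)} u_\eps(y,s)\qquad\text{and}\qquad
\underline u(x,t):=\liminf_{\eps\to 0,\,(y,s)\to(x,t)} u_\eps(y,s).
\end{align*}
By Corollary \ref{cor:main} (or its parabolic analogue) applied to \eqref{eq:eps} and \hyp{F6'}, the $u_\eps$ are uniformly bounded, so $\bar u$ and $\underline u$ are finite valued, usc and lsc respectively, and $\underline u\leq \bar u$ everywhere.

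The first step is a parabolic version of Lemmas \ref{lem:limit}--\ref{lem:limit2}: if $\phi\in C^{2,1}$ is bounded and $\bar u-\phi$ attains a strict global maximum at $(\bx,\bar t)\in \R^N\times(0,T]$, then by standard arguments there is a sequence of maximum points $(x_\eps,t_\eps)\to(\bx,\bar t)$ of $u_\eps-\phi$ with $u_\eps(x_\eps,t_\eps)\to\bar u(\bx,\bar t)$. Writing the subsolution inequality for $u_\eps$, splitting $L_{1,\eps}=L_{\eps,\delta}+L_\eps^\delta$ and $L_{2,\eps}=\tilde L_{\eps,\delta}+\tilde L_\eps^\delta$, and using the maximum point property to estimate $L_\eps^\delta[u_\eps,D\phi]$ and $\tilde L_\eps^\delta[u_\eps,D\phi]$ from above by the corresponding quantities for $\phi$, the computation of Lemma \ref{lem:limit} carries over verbatim in $x$: the Taylor expansion of $j_i$ at $z=0$ (from \hyp{J4}) and the concentration assumption \hyp{M$_\eps$} yield
\begin{align*}
L_{\eps,\delta}[\phi,D\phi](x_\eps,t_\eps)+\tilde L_{\eps,\delta}[\phi,D\phi](x_\eps,t_\eps)
\longrightarrow L_0\big(D\phi(\bx,\bar t),D^2\phi(\bx,\bar t)\big)+o_\delta(1)
\end{align*}
as $\eps\to 0$, while the tail terms $L_\eps^\delta$, $\tilde L_\eps^\delta$ vanish by \hyp{M$_\eps$}. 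Letting $\eps\to0$ and then $\delta\to0$, and invoking the continuity of $F$ in the fully nonlinear case, one obtains
\begin{align*}
\phi_t(\bx,\bar t)+F\Big(\bar u(\bx,\bar t),D\phi(\bx,\bar t),L_0(D\phi,D^2\phi)(\bx,\bar t)\Big)\leq f(\bx,\bar t),
\end{align*}
so $\bar u$ is a viscosity subsolution of \eqref{eq:00.time} in $Q_T$. Symmetrically, $\underline u$ is a supersolution.

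The next step is to check the initial condition $\bar u(x,0)\leq u_0(x)\leq \underline u(x,0)$ on $\R^N$. This is the main subtlety compared with the elliptic case. One standard way is to construct, for each $x_0\in\R^N$, explicit smooth barriers of the form $u_0(x_0)\pm\omega_{u_0}(|x-x_0|)\pm C_{x_0}t$ where $\omega_{u_0}$ is the modulus of $u_0$. Using that $\phi(x,t)=u_0(x_0)+\omega_{u_0}(|x-x_0|+\eta)+Ct$ is a classical supersolution of \eqref{eq:eps} for $C$ large (depending on $\phi$ through $F$, $L_\eps$ and $\|f\|_\infty$, uniformly in $\eps$ thanks to \hyp{M$_\eps$} and \hyp{J4}), a comparison against $u_\eps$ at $t=0$ gives a uniform-in-$\eps$ modulus in $t$ near $0$, forcing $\bar u(x_0,0)\leq u_0(x_0)$; the opposite inequality follows symmetrically.

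Finally, the comparison principle for the limit equation finishes the argument: in the quasilinear case \eqref{eq:local.time} falls under Theorem 5.1 in \cite{cil}, while in the fully nonlinear case the parabolic analogue of Lemma \ref{comp_loc} applies, since the localized nonlinearity $H(x,t,u,p,X)=F(u,p,L_0(p,X))-f(x,t)$ satisfies the structural conditions (3.14) of \cite{cil} by \hyp{J4} and \hyp{F3} (cf. Lemma \ref{lem:3.14}). With $\bar u(\cdot,0)\leq u_0\leq \underline u(\cdot,0)$, comparison gives $\bar u\leq \underline u$ in $\ol Q_T$, and since the reverse inequality is automatic, $\bar u=\underline u=:u\in C(\ol Q_T)$ solves \eqref{eq:local.time} (resp. \eqref{eq:00.time}). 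Local uniform convergence of $u_\eps$ to $u$ then follows by the compactness argument already used at the end of the proof of Theorem \ref{thm:localization}-$(a)$: picking $(x_\eps,t_\eps)$ to realize $\max_{\ol{B}_R\times[0,T]}(u_\eps-u)$ and extracting a convergent subsequence yields $\limsup_\eps \max(u_\eps-u)\leq \bar u(\bx,\bar t)-u(\bx,\bar t)=0$, and similarly for $-(u_\eps-u)$.

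The main obstacle is the initial-layer argument sketched in the third paragraph: one must produce barriers whose correction constants are uniform in $\eps$, which requires exploiting \hyp{M$_\eps$} and \hyp{J4} to bound $L_\eps[\phi,D\phi]$ uniformly in $\eps$ on smooth test functions with bounded derivatives. All other steps are routine adaptations of the elliptic proofs by inserting the time variable.
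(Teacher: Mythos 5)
Your proposal follows the same strategy as the paper's sketch: uniform boundedness of $u_\eps$, parabolic half-relaxed limits, identification of $\bar u$ and $\underline u$ as sub/supersolutions of the local limit equation, and then strong comparison to conclude $\bar u=\underline u$ and locally uniform convergence. The one place where you add genuine content is the initial-layer argument: the paper's sketch simply says "prove that $\overline u$ and $\underline u$ are respectively sub and supersolutions of the local limit problem'' and then invokes comparison, without explaining how the ordering $\bar u(\cdot,0)\leq u_0\leq \underline u(\cdot,0)$ at $t=0$ is established — which is needed before the parabolic comparison principle can be applied. Your barrier argument (a smooth, bounded approximation of $u_0(x_0)+\omega_{u_0}(|x-x_0|)+Ct$, with $C$ chosen uniformly in $\eps$ using \hyp{M$_\eps$} and \hyp{J4} to bound $L_\eps[\phi,D\phi]$ on smooth test functions) is the standard and correct way to produce this ordering, and it is worth noting that the uniform-in-$\eps$ bound on the nonlocal term is precisely what \hyp{M$_\eps$} furnishes (the $|z|^2\mu_{1,\eps}$- and $\mu_{2,\eps}$-masses are uniformly bounded near $z=0$ and vanish for $|z|>\delta$). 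This makes your write-up more complete than the paper's sketch at that point, while remaining the same route overall.

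One small remark on rigor: you should take the barrier to be a bounded $C^2$ function (e.g. replace $\omega_{u_0}(|x-x_0|)$ by a smooth bounded function dominating it, or use the quadratic estimate $u_0(x)\le u_0(x_0)+\eta+K_\eta|x-x_0|^2$ truncated at a large radius), since the viscosity test functions in the paper's Definition~\ref{def2} are required to be bounded and $C^2$, and the choice of $C$ must account for the truncation. These are routine adjustments and do not affect the validity of the argument.
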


\begin{proof}[Sketch of Proof]
    We use uniform boundedness of $u_\eps$ and the half-relaxed limits
\begin{align*}
        \ol{u}(x):=\limsup_{\eps\to0, y\to x,s\to t}u_\eps(y,s)\qquad\text{and}\qquad
        \underline{u}(x):=\liminf_{\eps\to0, y\to x, s\to t}u_\eps(y,s),
\end{align*}
and prove that $\ol{u}$ and $\ul u$ are respectively sub and
supersolutions of the local limit 
problem. Local uniform convergence is then obtained after proving that the limit
problem satisfies the comparison principle. The proofs of the comparison 
principles for \eqref{eq:local.time} and \eqref{eq:00.time} are similar to
the proofs in stationary case with standard modification such as
doubling also the time variables. 
\end{proof}

\subsection{More general nonlocal operators and equations}

As is common in viscosity solution theory, our results and proofs
extend easily to equations involving many different 
operators $L$ and equations of Bellman-Isaacs type involving
infima and/or suprema of indexed operators and equations of the
type we have studied before. An example is the following equation:
$$\sup_{\alpha\in\mathcal{A}}\inf_{\beta\in\mathcal{B}}
\bigg\{F^{\alpha,\beta}\Big(u(x),Du(x),
    L_{1,\alpha,\beta}[u,Du](x),\dots, L_{m,\alpha,\beta}[u,Du](x)\Big)-
    f_{\alpha,\beta}(x)\bigg\}=0$$
where 
$$L_{i,\alpha,\beta}[u,Du](x):=
\int_{\R^{P_i}}\Big[\big(u(x+j_{i,\alpha,\beta}(Du,z)\big)-u(x)-
    j_{i,\alpha,\beta}(Du,z)\cdot
    Du(x)\ind{|z|<1}\Big]\d\mu_{i,\alpha,\beta}(z)\,,$$
and for fixed $(i,\alpha,\beta)$, $\mu_{i,\alpha,\beta}$ is a measure on $\R^{P_i}$
    and  $j_{i,\alpha,\beta}:\R^N\times\R^{P_i}\to\R^N$.

\noindent $(i)$ {\em Comparison.} We can extend our comparison 
results easily to this equation if we require $\{j_{i,\alpha,\beta}\}$
and $\{\mu_{i,\alpha,\beta}\}$ to satisfy assumptions \hyp{M} and
\hyp{J1}--\hyp{J3} uniformly with respect to $i$, $\alpha$, and
$\beta$. However, we cannot mix gradient dependence with
$x$-dependence in  $j_{i,\alpha,\beta}$ for reasons explained in the
introduction. 
 This extension is essentially based on the classical inequality 
$$\begin{aligned}
    \sup_{\alpha\in\mathcal{A}}\inf_{\beta\in\mathcal{B}} & \bigg\{
    F^{\alpha,\beta}(u,p,\ell_{\alpha,\beta})-
    f_{\alpha,\beta}(x)\bigg\}-
\sup_{\alpha\in\mathcal{A}}\inf_{\beta\in\mathcal{B}}\bigg\{
    F^{\alpha,\beta}(v,q,\ell'_{\alpha,\beta})-
    f_{\alpha,\beta}(y)\bigg\}\\
& \leq \sup_{(\alpha,\beta)\in\mathcal{A}\times\mathcal{B}} \bigg\{
    F^{\alpha,\beta}(u,p,\ell_{\alpha,\beta})-
    F^{\alpha,\beta}(v,q,\ell'_{\alpha,\beta}) -
    f_{\alpha,\beta}(x)+
    f_{\alpha,\beta}(y)   \bigg\}\,,
\end{aligned}
$$
where for each $(\alpha,\beta)$,
$\ell_{\alpha,\beta},\ell'_{\alpha,\beta}\in\R^m$.
In order to use this inequality in the various passages to the limit,
we have of course to use the uniformity with respect to $i$, $\alpha$, and
$\beta$ of the constants appearing in our hypotheses on $j$ and
$\mu$. Notice that for the $F$-hypotheses, we have to reformulate them with
a vector $\ell\in\R^m$. For instance, ellipticity condition \hyp{F1} becomes

\noindent\hyp{F1'} $F:\R\times\R^N\times\R^m$
is continuous and for any $u\in\R$, $p\in\R^N$, $\ell,\ell'\in\R^m$
s.t. $\ell_i\leq \ell'_i$ ($i=1\dots m$), $$F(u,p,\ell)\leq F(u,p,\ell')\,.$$

\noindent $(ii)$ {\it Existence} is obtained for these more general
equations as we 
did in Section~\ref{sect:existence}, by a series of approximations including
truncations (of the measures and operators), vanishing viscosity and so on. 
Again, uniformity of the hypotheses with respect to $i$,
$\alpha$, and $\beta$ are needed in order to pass to the limit in the various
approximations.

\noindent $(iii)$
{\it The local limit results} follow again the same lines as in
Section~\ref{sect:localization}, though the local operators involve also a sup/inf
of operators $L_0^{i,\alpha,\beta}(Du,D^2u)$ defined in Definition~\ref{defsigma}.

Finally, these extensions are also valid for the parabolic versions discussed in
Section~\ref{sect:parabolic}.

\section*{Acknowlegement}
We would like to thank Luca Rossi for pointing out a critical error in
the existence proof of an earlier version of this paper. 

\appendix

\section{Equi-integrability and convergence}
\label{app:equiint}

We give the definition equi-integrability (also called uniform
integrability) and a result that we have used many times in this paper, a
generalization of the dominated convergence theorem due to Vitali. Our
presentation follow \cite{Ru:Book} page 133.

\begin{definition}
    Let $(\Omega,\mathcal{E},\mu)$ be a positive measure space. A
    family $(f_i)_{i\in 
      I}\subset L^1(\Omega,\mu)$ is equi-integrable if for 
    any $\eps>0$ there exists $\eta>0$ such that
    $$\sup_{i\in I}\int_A|f_i(z)|\,\d\mu(z)<\eps\,$$
 for every $A\in
    \mathcal{E}$ such that $\mu(A)<\eta$. 
\end{definition}

The Vitali convergence theorem is the following result:

\begin{proposition}
Assume $(\Omega,\mathcal{E},\mu)$ is a positive finite measure space and
$(f_n)_{n\in\N}\subset L^1(\Omega,\mu)$ an equi-integrable family such that
$f_n\to f$ $\mu$-a.e.. Then $f_n\to f$ in $L^1(\Omega,\mu)$.
\end{proposition}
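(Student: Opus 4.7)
The plan is to follow the classical route via Egorov's theorem, after first upgrading the equi-integrability hypothesis into two quantitative ingredients: a uniform $L^1$-bound and a uniform small-set integrability that transfers to the limit $f$.

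First I would show $f\in L^1(\Omega,\mu)$ and $\sup_n\|f_n\|_{L^1}<\infty$. For the uniform bound, take $\eta>0$ from equi-integrability with $\eps=1$; since $\mu(\Omega)<\infty$, partition $\Omega$ into finitely many measurable pieces $A_1,\dots,A_k$ with $\mu(A_j)<\eta$, so that $\int_\Omega|f_n|\,\d\mu\leq k$ uniformly in $n$. Then Fatou's lemma applied to $|f_n|\to|f|$ $\mu$-a.e. yields $f\in L^1(\Omega,\mu)$ and, crucially, $\int_A|f|\,\d\mu\leq\liminf_n\int_A|f_n|\,\d\mu$ for every $A\in\mathcal{E}$. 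In particular the small-set control inherits to $f$: whenever $\mu(A)<\eta$, $\int_A|f|\,\d\mu\leq\eps$ as well.

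The main step is to combine Egorov with the small-set control. Fix $\eps>0$ and choose $\eta>0$ as in the definition of equi-integrability. Since $\mu(\Omega)<\infty$ and $f_n\to f$ $\mu$-a.e., Egorov's theorem provides a measurable $E\subset\Omega$ with $\mu(\Omega\setminus E)<\eta$ on which $f_n\to f$ uniformly. Split
\begin{equation*}
\int_\Omega|f_n-f|\,\d\mu=\int_E|f_n-f|\,\d\mu+\int_{\Omega\setminus E}|f_n-f|\,\d\mu.
\end{equation*}
The first integral is bounded by $\mu(\Omega)\cdot\sup_E|f_n-f|$, which tends to $0$ as $n\to\infty$ by uniform convergence on $E$ and finiteness of $\mu$. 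The second is bounded by $\int_{\Omega\setminus E}|f_n|\,\d\mu+\int_{\Omega\setminus E}|f|\,\d\mu\leq 2\eps$ by the equi-integrability of $(f_n)$ and the inherited control on $f$ from the previous step.

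Taking $\limsup_{n\to\infty}$ yields $\limsup_n\|f_n-f\|_{L^1(\Omega,\mu)}\leq 2\eps$, and since $\eps>0$ is arbitrary the conclusion follows. The only step requiring any real care is the verification that equi-integrability passes to the a.e. limit $f$ (so that the $\int_{\Omega\setminus E}|f|$ term is controlled); this is handled cleanly by Fatou applied on each small set, and is the essential place where the finite-measure assumption and a.e. convergence cooperate. Everything else is routine splitting and Egorov.
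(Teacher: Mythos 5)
The paper does not include a proof of this proposition; it simply states the Vitali convergence theorem (Appendix~\ref{app:equiint}) and refers the reader to Rudin. Your Egorov-based argument is the standard and correct route, so there is no genuine divergence in approach to report: Egorov produces $E$ with $\mu(\Omega\setminus E)<\eta$ and uniform convergence on $E$, uniform convergence on $E$ plus $\mu(\Omega)<\infty$ handles $\int_E|f_n-f|$, and equi-integrability of $(f_n)$ together with the Fatou-inherited small-set control on $f$ handles $\int_{\Omega\setminus E}|f_n-f|$.

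One technical point is worth flagging. Your argument for the preliminary uniform bound $\sup_n\|f_n\|_{L^1}<\infty$ — partitioning $\Omega$ into finitely many measurable pieces each of measure $<\eta$ — is not valid on a general positive finite measure space: if $\mu$ has an atom of mass $\geq\eta$ (e.g.\ a single Dirac point mass), no such partition exists. So that step, as written, has a gap. Fortunately the preliminary is dispensable. The only thing you actually use downstream is the small-set control $\int_A|f|\,\d\mu\leq\liminf_n\int_A|f_n|\,\d\mu\leq\eps$ for $\mu(A)<\eta$, and this is just Fatou applied on $A$, no uniform $L^1$ bound required. Likewise $f\in L^1$ follows a posteriori from your own split: $\int_E|f|\,\d\mu\leq\int_E|f_{n_0}|\,\d\mu+\mu(\Omega)\sup_E|f-f_{n_0}|<\infty$ for $n_0$ large, and $\int_{\Omega\setminus E}|f|\,\d\mu\leq\eps$. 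With the partition claim deleted (or replaced by this observation), the proof is complete and correct.
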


Note that the dominated convergence theorem is a consequence of
this result (on finite measure spaces!) since domination by a
fixed, integrable function implies equi-integrability.

\section{Proof of Lemma \ref{lem:measure}}
\label{sec:pf1}

 \noindent $(a)$ \quad
    Note that $\mu_{1}\ind{\frac1k<|z|<k}$ has a finite mass by
    \hyp{M}. An application of Fubini's theorem then shows that the
    convolution $\mu_{1,k}=(\mu_{1}\ind{\frac1k<|z|<k})*\rho_k$ is a measure
    which has density $\bar\mu_{1,k}$ with respect to the Lebesgue
    measure given by 
    $$\bar\mu_{1,k}(x)=\int_{\R^N} \rho_k(x-z)\ind{\frac1k<|z|<k}\,\mu_{1}(\!\dz)\,.$$
    This function is bounded for each $k$:
    $$\|\bar\mu_{1,k}\|_\infty\leq\|\rho_k\|_\infty\, 
    \mu_1\Big(\Big\{\frac1k<|z|<k\Big\}\Big)<\infty\,,$$
A similar argument shows the existence and boundedness of $\bar\mu_{2,k}$.

Let $g(z):=|z|^2\ind{|z|<\delta}$, then by Fubini's theorem and symmetry of $\rho_k$,
    \begin{align*}
        \int_{|z|<\delta}|z|^2\bar\mu_{1,k}(z)\dz &=
    \int_{\R^N}g(z)\int_{\R^N}
    \rho_k(z-y)\ind{\frac1k<|y|<k}\,\mu_{1}(\!\dy)\dz=\int_{\R^N}( g\ast\rho_k) (y)
    \ind{\frac1k<|y|<k}\,\mu_1(\!\dy).
    \end{align*}
Note that $\rho_k*g$ is
    continuous with support in $\{|z|<\delta+1/k\}$. By H\"older's inequality,
$$|\rho_k\ast g|(z)\leq \max_{y\in z+\ol B_{1/k}}|g(y)|\cdot 1 \leq
(|z|+1/k)^2\leq 4|z|^2\qquad\text{for}\qquad |z|>1/k,$$
and hence
\begin{align*}
        \int_{|z|<\delta}|z|^2\bar\mu_{1,k}(z)\dz\leq 4  \int_{|z|<\delta}|z|^2\,\mu_1(\!\dz).
\end{align*}
The proof of $(a)$ is complete.
\medskip

 \noindent $(b)$ \quad Let $\eps>0$ be given, and split the integral in
 two using a $K>\delta$ to separate the domains:
\begin{align*}
I:=&\int_{|z|\geq\delta}\psi_k(z)\, d\mu_{1,k}(z)=\int_{\delta\leq
  |z|\leq K}\psi_k(z)\, d\mu_{1,k}(z) +\int_{|z|\geq K}\psi_k(z)\,
d\mu_{1,k}(z)=I_1+I_2,\\
J:=&\int_{|z|\geq\delta}\psi(z)\, d\mu_{1}(z)=\int_{\delta\leq
  |z|\leq K}\psi(z)\, d\mu_{1}(z) +\int_{|z|\geq K}\psi(z)\,
d\mu_{1}(z)=J_1+J_2.
\end{align*}
We will show that if we take $K$ big enough, then $|I_2|+|J_2|<\eps$
for all $k$, and then if $k$ is big enough, $|I_1-I_2|<\eps$. The
conclusion is that $|I-J|\leq 2\eps$ and the proof is complete.

Consider first $I_2$ and $J_2$. By the definition of $\mu_{1,k}$ and Fubini's
 theorem, $\mu_{1,k}(\{|z|>K\})\leq\mu_1(\{|z|>K-1/k\})$, and then by
 the dominated convergence theorem and $\mu_1(\{|z|>1\})<\infty$,
 $$0\leq\mu_{1,k}(\{|z|>K\})\leq\mu_1(\{|z|>K-1/k\})\to 0\qquad
 \text{as}\qquad K\to\infty.$$
Note that this convergence is uniform in $k$. Hence since $\psi_k$ and
$\psi$ are uniformly bounded in $k$, it follows that $I_2,J_2\to0$ as
$K\to\infty$ uniformly in $k$.

We complete the proof by showing that $|I_1- J_1|\to0$ as
$k\to\infty$ for any fixed $K$. Note that
    $$\begin{aligned}
       |I_1-J_1|\leq  &
    \int_{\delta\leq|z|\leq K}\big|\psi_k-\psi\big|\,\mu_{1,k}(\!\dz) 
    + \Big|\int_{\delta\leq|z|\leq K}\psi\,\mu_{1,k}(\!\dz)-
    \int_{\delta\leq|z|\leq K}\psi\,\mu_{1}(\!\dz)\,\Big|\,.
    \end{aligned}$$
Consider the first term on the right hand side. Since
$\sup_{\delta\leq|z|\leq K}|\psi_k-\psi|\to0$ by assumption, and 
$\mu_{1,k}(\{\delta\leq|z|\leq K\})\leq \mu_1(\{|z|\geq\delta-\frac1k\})$
as in the $|z|>K$ case, for $k>\frac1\delta$ we get 
    $$\int_{\delta\leq|z|\leq K}\big|\psi_k-\psi\big|\,\mu_{1,k}(\!\dz)\leq
    \sup_{\delta\leq|z|\leq
      K}|\psi_k-\psi|\,\mu_1(\{|z|\geq\delta-\tfrac1k\})\to0\quad\text{as}\quad
    k\to\infty\,.$$
For the second term, let $g(z)=\psi(z)\ind{\delta\leq|z|\leq K}$ and
use Fubini's theorem to see that
    $$\int_{\delta\leq|z|\leq K}\psi\,\mu_{1,k}(\!\dz)=
    \int_{\R^N} (\rho_k\ast g)(z)\,\ind{\tfrac1k<|z|<k}\,\mu_{1}(\!\dz)\,.$$
    In the last integral, the support of the convolution is
    $\{\delta-\frac1k\leq|z|\leq K+\frac1k\}$ so we need $k>\frac1\delta$.
    Since $\rho_k\ast g\to g$ and $\ind{\frac1k<|z|<k}\to1$ pointwise
    (almost everywhere) both functions are uniformly bounded, we can
    pass to the limit using dominated convergence and find that
    \begin{equation}\label{app:dom.cv}
    \Big|\int_{\delta\leq|z|\leq K}\psi\,\mu_{1,k}(\!\dz)-
    \int_{\delta\leq|z|\leq K}\psi\,\mu_{1}(\!\dz)\,\Big|\to0\quad\text{as
    }k\to\infty\,.
    \end{equation}
    The proof of $(b)$ is complete.
\medskip

    \noindent $(c)$ \quad
    Let $\eps>0$ be given, and split the integral in
 two using a $0<r<\min(\delta,\delta_0)$ to separate the domains:
\begin{align*}
I:=&\int_{0<|z|\leq\delta}\psi_k(z)\, d\mu_{1,k}(z)=\int_{|z|<r}\psi_k(z)\, d\mu_{1,k}(z) +\int_{r<|z|\leq\delta}\psi_k(z)\,
d\mu_{1,k}(z)=I_1+I_2,\\
J:=&\int_{0<|z|<\delta}\psi(z)\, d\mu_{1}(z)=\int_{|z|<r}\psi(z)\, d\mu_{1}(z) +\int_{r<|z|\leq\delta}\psi(z)\,
d\mu_{1}(z)=J_1+J_2.
\end{align*}
We will show that if we take $r$ small enough, then $|I_2|+|J_2|<\eps$
for all $k$, and then if $k$ is big enough, $|I_1-I_2|<\eps$. The
conclusion is that $|I-J|\leq 2\eps$ and the proof is complete.

The estimate $|I_2|+|J_2|<\eps$ for $r$ small, follows by the
assumptions on $\psi$, part $(a)$, and dominated convergence and \hyp{M}. For
example, 
    $$\int_{0<|z|<r}|\psi_k(z)|\mu_{1,k}(\!\dz)\leq
        C\int_{0<|z|<r}|z|^2\mu_{1,k}(\dz)\leq
        4C\int_{0<|z|<r}|z|^2\mu_{1}(\dz)\to0\quad\text{as}\quad
        r\to 0.$$

Consider now $|I_1-I_2|$. We first introduce the functions
    $$\tilde\psi_k(z):=\frac{\psi_k(z)}{|z|^2}\quad\text{and}\quad
    \tilde\psi(z):=\frac{\psi(z)}{|z|^2}\,,$$
and measures
    $$\tilde\mu_{1,k}(\!\dz):=|z|^2\mu_{1,k}(\!\dz)\quad\text{and}\quad
    \tilde\mu_{1}(\!\dz):=|z|^2\mu_{1}(\!\dz)\,.$$
    By the assumptions and \hyp{M}, $\tilde\psi_k$ and $\tilde\psi$
    are uniformly bounded, $\tilde\psi_k\to\tilde\psi$ uniformly on $r<|z|<\delta$, and
    $\tilde\mu_{1,k}$ and $\tilde\mu_{1}$ are bounded measures on
    $0<|z|<\delta$. It follows that
    \begin{align*}
|I_1-I_2|
&\leq \int_{r<|z|<\delta}|\tilde\psi_k-\tilde\psi|\,\tilde\mu_{1,k}(\!\dz)+\Big|\int_{r<|z|<\delta}\tilde\psi\,\tilde\mu_{1,k}(\!\dz)-
    \int_{r<|z|<\delta}\tilde\psi\,\tilde\mu_{1}(\!\dz)\,\Big|.
\end{align*}
The first term converges by uniform convergence of $\tilde\psi_k$ and
uniform boundedness of $\mu_{1,k}(\{r<|z|<\delta\})$. For
the second term, we note that (see part $(a)$)
    $$\int_{r<|z|<\delta}\tilde\psi\,\tilde\mu_{1,k}(\!\dz)=
    \int_{r<|z|<\delta}\psi\,\mu_{1,k}(\!\dz)=
    \int_{\R^N} \big(\rho_k\ast(\psi(\cdot)\ind{r<|\cdot|<\delta})\big)(z)\,
   \ind{\frac1k<|z|<k}\ \mu_{1}(\!\dz)\,.$$
    The integrand is uniformly bounded (by
    $\|\psi(\cdot)\ind{r<|\cdot|<\delta}\|_\infty$) and converges pointwise
    to $\psi(z)\ind{r<|z|<\delta}$ for a.a. $z$, so by \hyp{M} and the dominated
    convergence theorem, we can conclude that $|I_1-J_1|\to
    0$ as $k\to\infty$. The proof of $(c)$ is complete. 
\medskip

\noindent $(d)$ \quad This proof is similar to the proof of $(b)$, we omit it.

\section{The proof of Lemma \ref{lem:reg.j}}
\label{sec:pf2}

\noindent  $(a)$\quad Let $K\subset \R^N\times\R^P$ be any bounded set, then by the definition of
$\rho_k$, H\"older's inequality, and $\int \rho_k(p)\,\d p=1$,
$\|j_{i,k}\|_{L^\infty(K)}\leq \|j_{i}\|_{L^\infty(K_{1/k})}$ where
$$K_{1/k}=\big\{(q,z)\in\R^N\times\R^P: \exists p\in\R^N\text{ such
        that }  (p,z)\in K\text{ and }|q-p|<1/k\big\}.$$
 By H\"older's inequality and \hyp{J1'}, we also find that for $|p|\leq
 r$ and $|z|<1$,
 $$|j_{1,k}(p,z)|\leq \max_{|q-p|<1/k}|j(q,z)|\int \rho_k(p)\,\d p
\leq C_{r+1/k}|z|\,,$$
and the proof of $(a)$ is complete.
\smallskip

\noindent $(b)$\quad By similar arguments, for $|p|,|q|,|z|<r$,
$$|j_{i,k}(p,z)-j_{i,k}(q,z)|\leq \int_{|s|<r+1/k}
|j(s,z)||\rho_k(p-s)-\rho_k(q-s)|\,\ds\leq  \sup_{\substack{|s|\leq r+1/k\\
        |z|\leq r}}|j(s,z)|\|D\rho_k\|_{L^1} |p-q|\,,$$
and the proof of $(b)$ is complete by \hyp{J1'} and the standard estimate
$\|D\rho_k\|_{L^1}\leq k \|D\rho\|_{L^1} $.
\smallskip

\noindent $(c)$\quad By the definition of $j_{1,k}$, properties of
mollifiers and Jensen's inequality, Fubini and \hyp{J2},
\begin{align*}
\int_{|z|>0}|j_{1,k}(p,z)-j_{1,k}(q,z)|^2\dmu_1(z)&\leq \int_{|z|>0}
\int_{y\in\R^N}\rho_k(y)\big|j_{1}(p-y,z)-j_{1}(q-y,z)\big|^2dy\,\dmu_1(z)\\ 
&\leq
\int_{y\in\R^N}\rho_k(y)\Big(\int_{|z|>0}\big|j_{1}(p-y,z)-j_{1}(q-y,z)\big|^2
\,\dmu_1(z)\Big)\dy\\
&\leq \int_{y\in\R^N}\rho_k(y)\,\omega_{j,r+\frac1k}(p-q)\,\dy
\leq\omega_{j,r+\frac1k}(p-q)\,.
\end{align*}

\noindent $(d)$\quad 
Let $A\subset \{0<|z|<\delta_0\}$ be a Borel set. Then as in part $(c)$, we use
properties of mollifiers and Jensen's inequality, Fubini and \hyp{J2},
to see that
    \begin{align*}
&\int_A|j_{1,k}(p,z)|^2\,\d\mu(z)\leq
    \int_{A}\int_{|q|<\frac1k}\rho_k(q)\big|j_{1}(p-q,z)\big|^2dq\,\dmu_1(z)
    \leq \max_{|q|<\frac1k}\int_{A}\big|j_{1}(p-q,z)\big|^2\dmu_1(z)\,.
\end{align*}
Now $(d)$ follows from \hyp{J3} applied with $r+1$, which is bigger
than $r+1/k$ for any $k\geq1$.
\smallskip

\noindent $(e)$\quad First fix a $z$ such that $j_i(p,z)$ is
continuous in $p$, \textit{cf.} \hyp{J1}. Hence $j_{i,k}(\cdot,z)$ 
converges locally uniformly to $j_{i}(\cdot,z)$ as $k\to\infty$ (see for instance
Appendix C, Theorem 6 in \cite{E:Book}). Moreover, $j_i(\cdot,z)$ is locally uniformly
continuous in $p$, say with a modulus $\omega_r$ for $|p|,|q|\leq r$. Then
$$|j_{i,k}(p,z)-j_{i,k}(q,z)|\leq \int_{y\in\R^N}
|j(p-y,z)-j(q-y,z)|\rho_{k}(y)dy \leq \omega_r(|p-q|)\cdot 1\,,$$
and $j_{i,k}(\cdot,z)$ is equicontinuous in $p$. Combining these
two results, it follows that for every $p_k\to p$,
\begin{align}\label{j-lim}
j_{i,k}(p_k,z)\to j_i(p,z)\quad \text{as}\quad
k\to\infty\qquad\text{for a.e. $z$}.
\end{align}

Then we let $p_k=D\phi(x_k)$ and $p=D\phi(x)$, and consider 
$L_{2,k}[\phi,p_k](x_k)=\int
\phi(x_k+j_{2,k}(p_k,z))-\phi(x_k)\, \mu_2(\!\dz)$.
By \eqref{j-lim} and continuity of $\phi$ and $D\phi$,
$$\phi(x_k+j_{2,k}(p_k,z))-\phi(x_k)\to
\phi(x+j_{2}(p,z))-\phi(x)\quad\text{for a.e. $z$},$$
and since the integrand is uniformly bounded by the $\mu_2$-integrable function
$2\|\phi\|_\infty$ (\textit{cf.} \hyp{M}), the dominated convergence theorem
implies that $$L_{2,k}[\phi,p_k](x_k)\to L_2[\phi,p](x)\qquad\text{as}\qquad
k\to \infty. $$

Note that $L_{1,k}=L_{1,\delta,k}+L_{1,k}^\delta$. For the
$L_{1,k}^\delta$-term the proof is more or less the same as for the
$L_{2,k}$-term (see above). It only remains to consider the
$L_{1,\delta}$-term.
As above, we see that the integrand converges to
$\phi(x+j_{1}(D\phi(x),z))-\phi(x)- j_{1}(D\phi(x),z)\cdot D\phi(x)$
for a.e. $z$. An application of Taylor's theorem and \hyp{J1'}, show
that the integrand uniformly bounded by the $\mu_1$-integrable
function $\|D^2\phi\|_{L^\infty(B_{R_2})}C_{R_1}|z|^2$ where
$R_1=\max(\delta,\max_k|D\phi(x_k)|)$ and
$$R_2=\max_{k\in\N}|x_k|+\max_{|s|,|z|\leq R_1}|j(s,z)|\,.$$ 
Hence we conclude by the dominated convergence theorem that
$$L_{1,\delta,k}[\phi,D\phi(x_k)](x_k)\to
L_{1,\delta}[\phi,D\phi(x)](x)\qquad\text{as}\qquad k\to \infty.$$
The proof of $(e)$ is complete.


\end{document}